\newtheorem{theorem}{\textbf{Theorem}}[section]
\newtheorem{lemma}{\textbf{Lemma}}[section]
\newtheorem{proposition}{\textbf{Proposition}}[section]
\newtheorem{assumption}{\textbf{Assumption}}[section]
\newtheorem{remark}{\textbf{Remark}}[section]
\newtheorem{definition}{\textbf{Definition}}[section]
\def\theorem@qed{\pushQED{\qed}\qedhere\popQED}
\def\BibTeX{{\rm B\kern-.05em{\sc i\kern-.025em b}\kern-.08em
    T\kern-.1667em\lower.7ex\hbox{E}\kern-.125emX}}
\begin{document}

\title{\algoname: A compressed Newton-type fully distributed optimization algorithm*
}

\author{\IEEEauthorblockN{Souvik Das}
\IEEEauthorblockA{\textit{Dept. of Electrical Eng.} \\
\textit{Uppsala University}\\
Uppsala, Sweden \\
souvik.das@angstrom.uu.se}
\and
\IEEEauthorblockN{Subhrakanti Dey}
\IEEEauthorblockA{\textit{Dept. of Electrical Eng.} \\
\textit{Uppsala University}\\
Uppsala, Sweden \\
subhrakanti.dey@angstrom.uu.se}
}

\maketitle
\begin{abstract} Compression techniques are essential in distributed optimization and learning algorithms with high-dimensional model parameters, particularly in scenarios with tight communication constraints such as limited bandwidth. This article presents a \emph{communication-efficient second-order} distributed optimization algorithm, termed as \(\algoname\), equipped with a compression module, designed to minimize the average of local strongly convex functions. \(\algoname\) incorporates two consensus-based averaging steps at each node: gradient tracking and approximate Newton-type iterations, inspired by the recently proposed  Network-GIANT. Under certain sufficient conditions on the step-size, \(\algoname\) achieves significantly faster linear convergence, comparable to that of its first-order counterparts, both in the compressed and uncompressed settings. \(\algoname\) is efficient in terms of data usage, communication cost, and run-time, making it a suitable choice for distributed optimization over a wide range of wireless networks. Extensive experiments on synthetic data and the widely used CovType dataset demonstrate its superior performance.
\end{abstract}

\begin{IEEEkeywords}
compression, approximate Newton-Raphson, distributed optimization
\end{IEEEkeywords}

\section{Introduction}
\label{sec:intro}
Data compression techniques such as quantization and sparsification have gained popularity in decentralized optimization in recent years, driven by the growing number of handheld smart devices capable of performing complex machine learning tasks remotely. These devices function as nodes of multi-agent systems \cite{ref:DKM-FD-HS-SHL-SC-RB-JL-17,ref:AN-JL-18}, exchanging data with centralized servers and/or among themselves. Such compression techniques enable communication-efficient large-scale optimization on local devices by reducing the amount of data to be exchanged among themselves while maintaining performance and achieving convergence rates comparable to their non-compressed counterparts, particularly in scenarios with tight link bandwidth constraints between individual nodes.

While most research on compressed fully decentralized algorithms focuses on first-order gradient-descent–based approaches \cite{ref:AK-TL-SUS-MJ-19, ref:XL-YL-RW-JT-MY-20, ref:YL-ZL-KH-SP-22, ref:JZ-KY-LX-23}, the literature on fully-distributed communication-efficient \emph{approximate} Newton-type algorithms is sparse, with a few exceptions. The authors in \cite{ref:HL-JZ-AMCS-QL-23} proposed a Newton-type distributed optimization algorithm that enhances communication efficiency by compressing the Hessian. While such a compression scheme is a natural choice, it often results in a larger amount of bit transmission than innovation compression schemes developed for gradient-descent-based algorithms such as \(\cgt\) \cite{ref:YL-ZL-KH-SP-22} or \(\cold\) \cite{ref:JZ-KY-LX-23}. Our proposed algorithm, \(\algoname\), employs innovation compression in conjunction with an approximate Newton-type algorithm for objective functions satisfying strong convexity and smoothness conditions. \(\algoname\) is a fully distributed approximate Newton-type algorithm. It simultaneously exploits the curvature of the objective function to achieve significantly faster convergence rates than the first-order methods, handles general compression schemes with arbitrary precision, achieves the communication efficiency of the same order of gradient-based methods and attains comparable convergence rates to its uncompressed counterpart.

The main contributions of the article are as follows:
\begin{itemize}[leftmargin = *, label = \(\circ\)]
    \item We present \(\algoname\), a fully distributed Newton-type optimization algorithm. It extends the recently developed Network-GIANT \cite{ref:AM-GS-LS-SD-23} by integrating a data compression module with a gradient-tracking–based approximate Newton update. \(\algoname\) supports a broad class of compression schemes with arbitrary precision.

    \item We establish that the \(\algoname\) achieves a linear convergence rate comparable to that of the uncompressed Network-GIANT (with \(\agents\) agents and a \(\dimdv\)-dimensional model) with arbitrary initialization,  with a  per-iteration communication overhead of \(\Bigoh{( \agents \dimdv)}\), equivalent to the first-order gradient-based optimization methods, while achieving a significantly faster convergence rate. under a set of sufficient conditions, \(\algoname\) exhibits linear convergence; see Lemma \ref{lem: key lemma} and Theorem \ref{thm:key thm}.

    \item The empirical evidence from experiments on synthetic data and the widely used CovType dataset consistently demonstrates  (see \S~\ref{sec:experiments}) superior convergence rates and communication efficiency across  different classes of compression schemes --- including biased, unbiased, and general compression (see Definition \ref{def:General Compression Operator}) --- when compared with \(\cgt\) \cite{ref:YL-ZL-KH-SP-22}, \(\lead\) \cite{ref:XL-YL-RW-JT-MY-20}, \(\cold\) \cite{ref:JZ-KY-LX-23}, and the algorithm in \cite{ref:HL-JZ-AMCS-QL-23}. 
\end{itemize}

\subsection*{Organization}The rest of the paper is organized  as follows: \S~\ref{sec:ps} describes the problem and states the key assumptions. In \S~\ref{sec:algo} we present the main steps involved in \(\algoname\), followed by its convergence analysis. The numerical experiments are included in \S~\ref{sec:experiments}. The proofs of the main results are relegated to Appendix \ref{sec:proofs}.
\section{Problem formulation and notations}
\label{sec:ps}
\subsubsection*{Notations}
We employ the following notations throughout the article. For any square matrix \(D \in \Rbb^{\agents \times \agents}\), the  \(\spec(D)\) refers to the spectral radius of \(D\). We denote by \(\norm{D} = \sup_{x \neq 0, x \in \Rbb^n} \frac{\norm{Ax}}{\norm{x}}\) the matrix norm. All vector norms are assumed to be Euclidean unless specified otherwise.

\subsection{Problem description}\label{subsec:ps}
Consider the distributed optimization problem 
\begin{equation}\label{eq:key prob}
    \min_{\dvar \in \Rbb^{\dimdv}} \objfunc(\dvar) \Let \frac{1}{\agents}\sum_{i=1}^{\agents} \objfunc_i(\dvar),
\end{equation}
with \(\agents\)-agents connected over a network \(\network\). Here \(\dvar \in \Rbb^{\dimdv}\) is the decision variable, for each \(i\)-th agents where \(i \in \aset[]{1, 2, \ldots, \agents}\) the function \(\objfunc_i: \Rbb^{\dimdv} \mapsto \Rbb\) denotes the local objective function, which depends on the local data \(\ldata_i\). The sequence \(\ldata \Let \bigl(\ldata_i \bigr)_{i=1}^{\agents}\) represents available to the agents, and in general, it can be heterogeneous.  The objective of the distributed optimization problem is for all nodes to find the global minimizer through local updates and communications with their neighbors only.


The agents are connected through a communication network, modeled as an \emph{undirected} graph \(\network = \aset[\big]{\nodeset, \edgeset}\), where \(\nodeset = \aset[]{1,2,\cdots,\agents}\) denotes the set of nodes (or vertices) representing the agents, and \(\edgeset \subset \nodeset \times \nodeset\) denotes the set of edges. Each edge \((i,j) \in \edgeset\) represents an undirected link between agents \(i\) and \(j\), allowing them to communicate and exchange data directly. For each agent \(i \in \nodeset\), we define the set of \emph{neighbors} by \(\ngbr_i \Let \aset[]{j \in \nodeset \suchthat (i,j) \in \edgeset}\). 

\subsection{Standing assumptions}\label{subsec:sa}
\begin{assumption}[Consensus weight matrix]
    \label{assum:on graph}
    A nonnegative doubly stochastic consensus weight matrix \(\Wght = [\wght_{ij}]_{i,j=1}^{\agents}\), satisfying \(\sum_{i'} \wght_{i'j} = \sum_{j'} \wght_{ij'}\) for every \(i',j' \in \nodeset\), encodes the network structure. Agent \(i\) can receives data from agent \(j\) if and only if \(\wght_{ij}>0\). Moreover, for \(i \in \nodeset\) we have \(\wght_{ii}>0\).
\end{assumption}

\begin{assumption}[Regularity of local objective functions]
    \label{assum:Standard assumptions}
    We impose the standing assumptions: for each \(i \in \aset[]{1,2,\ldots,\agents}\), \(\objfunc_i: \Rbb^{\dimdv} \mapsto \Rbb\) is twice differentiable, \(\strconv\)-strongly convex, and admits \(\lips\)-Lipschitz gradients, i.e., there exist positive constants \(\strconv,\ \lips\) such that for each \(i \in \aset[]{1,2,\ldots, \agents}\) and for all \(\dvar,y \in  \Rbb^{\dimdv}\), we get
    \begin{equation}\label{eq:con_smoothness}
        \begin{cases}
            \hess \objfunc_i (\dvar) \succeq \strconv \identity{\dimdv}\\
            \norm{\grd \objfunc_i(\dvar) - \grd \objfunc_i(y)} \leq \lips \norm{\dvar - y}.
        \end{cases}
    \end{equation}
\end{assumption}
The above two conditions in \eqref{eq:con_smoothness} imply that 
\begin{equation}
    \label{eq:strongly_convex_Lips}
    \strconv \identity{\dimdv} \preceq \hess \objfunc_i(\dvar) \preceq \lips \identity{\dimdv} \text{ for every }\dvar \in \Rbb^{\dimdv},
\end{equation}
for each \(i \in \aset[]{1,2,\ldots, \agents}\). It also follows immediately that the global objective function $f(x)$ is strongly convex and Lipschitz continuous with the same parameters and $\mu \identity{\dimdv} \preceq \nabla^2 f(x) \preceq \lips \identity{\dimdv}$. 
Note that the optimization problem \eqref{eq:key prob} admits an optimal solution \(\dvar^{\ast}\), which follows as an immediate consequence of strong convexity \cite[p. no. \(460\)]{ref:SB-LV-04}. 

\subsection{Compression operators}\label{subsec:compression schemes}
\begin{definition}{\cite[Assumption \(5\)]{ref:YL-ZL-KH-SP-22}}
    \label{def:General Compression Operator}    The operator \(\gcompop: \Rbb^{\dimdv} \lra \Rbb^{\dimdv}\) is a \emph{general compression operator}, if there exists a constant \(\Cgen>0\) such that
    \begin{equation}
        \label{eq:general compressor operator}
        \EE \expecof[\big]{\norm{\gcompop(\dvar) - \dvar}^2} \le \Cgen \norm{\dvar}^2 \quad \text{for all }\dvar \in \Rbb^{\dimdv},
    \end{equation}
    where the expectation \(\EE \expecof[]{\cdot} \equiv \EE_{\gcompop} \expecof[]{\cdot}\) is applied with respect to the underlying randomness associated with the compression operator.\footnote{Throughout we suppress \(\gcompop\) in the expectation for the ease of writing 
    notation.} The \(\scale\)-scaling of the operator \(\gcompop\), where the scaling parameter \(\scale > 0\), satisfies
    \begin{equation*}
        \EE \expecof[\Bigg]{\norm{\frac{\gcompop(\dvar)}{r} - \dvar}^2} \le (1-\compar) \norm{\dvar}^2 \quad \text{for all }\dvar \in \Rbb^{\dimdv},
    \end{equation*}
    for some constant \(\compar \in \lorc{0}{1}\).
\end{definition}
\begin{remark}
    \label{rem:on different operators}
    With suitable scaling, the unbiased and biased compression operators \cite{ref:AB-SH-PR-MS-23} can be recovered from the general compression operator; see \cite{ref:YL-ZL-KH-SP-22,ref:AB-SH-PR-MS-23} for details on these derivations. The convergence analysis is presented for a general compression scheme, and for completeness, we include experiments with unbiased and biased operators.
\end{remark}

\section{\(\algoname\) and its analysis}
\label{sec:algo}
\subsection{Key steps of \(\algoname\)}\label{subsec:algo}
\(\algoname\) extends the core idea of Network-GIANT established in \cite{ref:AM-GS-LS-SD-23} to include data compression. It is described by the following recursions: for each \(i\)-th agent, 
\begin{align}
    \label{eq:agent con step}
    &\dvar_i(t+1) = \dvar_i(t) - \constsz \big(\edv_i(t) - \wedv_i(t) \big) - \stsz \cdvar_i(t),\nn \\
    &\gtvar_i(t+1) = \gtvar_i(t) - \constsz \big( \egt_i(t) - \wegt_i(t)\big) + \gt_i(t+1)
\end{align}
for each iteration \(t \in \Nz\), where \(\constsz, \stsz> 0 \) are scalars to be picked appropriately, and \(\gtvar_i(0) = \grd \objfunc_i\big(\dvar_i(0)\big)\) must be satisfied. Here the variable \(\gtvar(t)\) for each \(i\), refers to as  a gradient tracker \cite{ref:GQ-NL-17}. For each \(i \in \aset[]{1,2,\ldots, \agents}\), the variable \(\cdvar_i(t) \Let \hess \objfunc_i(\dvar_i(t)){\inverse} \gtvar_i(t) \in \Rbb^{\dimdv}\) represents the local descent direction, and \(\gt_i(t) \Let \grd \objfunc_i\big(\dvar_i(t)\big) - \grd \objfunc_i\big(\dvar_i(t-1)\big)\) with \(\gt_i(0) =\grd\objfunc_i\big(\dvar_i(0)\big) \in \Rbb^{\dimdv}\).  The variables \((\edv_i(t),\egt_i(t))\) denote the estimates of \(\dvar(t)\) and \(\gtvar(t)\) for each \(t\) and each agent \(i\), to be defined later. Moreover, \((\wedv_i(t), \wegt_i(t))\) are the weighted average of estimates communicated by the one-hop neighbors, and defined by \(\wedv_i(t) = \sum_{j \in \ngbr_i}w_{ij} \edv_j(t)\) and \(\wegt_i(t) = \sum_{j \in \ngbr_i}w_{ij} \egt_j(t)\). Here \(\dvar_i(t) \in \Rbb^{\dimdv}\) and \(\gtvar_i(t) \in \Rbb^{\dimdv}\) denote the local copies \(\dvar_i(t) \in \Rbb^{\dimdv}\) and \(\gtvar_i(t) \in \Rbb^{\dimdv}\) of \(\dvar(t) \in \Rbb^{\dimdv}\) and \(\gtvar(t) \in \Rbb^{\dimdv}\)  at the \(t\)-th iteration, respectively. 

In what follows, we adopt a matrix notation to compactly represent the recursion \eqref{eq:agent con step}. To that end, we define the global decision variables  \(\dmat(t) \Let \bigl(\dvar_1(t)\; \cdots\; \dvar_{\agents}(t) \bigr)^{\top} \in \Rbb^{\agents \times \dimdv}\) and  \(\gtmat(t) \Let \bigl(\gtvar_1(t)\; \cdots \; \gtvar_{\agents}(t) \bigr)^{\top} \in \Rbb^{\agents \times \dimdv}\). The above defined notations carry forward also to the remaining global variables \(\cdmat(t) \in \Rbb^{\agents \times \dimdv}\), \(G(t) \in \Rbb^{\agents \times \dimdv}\), and \(\big(\edvm(t),\egtm(t)\big) \in \Rbb^{\agents \times \dimdv} \times \Rbb^{\agents \times \dimdv}\) and \(\big(\wedvm(t),\wegtm(t)\big) \in \Rbb^{\agents \times \dimdv} \times \Rbb^{\agents \times \dimdv}\). Let \(\Objfunc: \Rbb^{\agents \times \dimdv} \lra \Rbb\) be the aggregate objective function, defined for each iteration \(t\) by \(\Objfunc(\dmat(t)) \Let \sum_{i=1}^{\agents}\objfunc_i(\dvar_i(t))\), and let \(\grd \Objfunc(\dmat(t)) \Let \bigl(\grd \objfunc_1(\dvar_1(t)) \; \cdots \; \grd \objfunc_{\agents}(\dvar_{\agents}(t)) \bigr)^{\top} \in \Rbb^{\agents \times \dimdv}\). We now compactly re-write the consensus steps \eqref{eq:agent con step}: for \(t \in \Nz\),
\begin{align}\label{eq:update rule}
    &\dmat (t+1) = \dmat(t) - \constsz \bigl(\edvm(t) - \wedvm(t)\bigr) - \stsz \cdmat(t) \nn \\ 
    &\gtmat(t+1) = \gtmat(t) - \constsz \bigl(\egtm(t) - \wegtm(t)\bigr) + 
    \G(t+1),
\end{align}
with \(\gtmat(0) = \grd \Objfunc\big(\dmat(0)\big)\). The quantity \(\stsz>0\) is the step-size of \(\algoname\). Here \(\constsz>0\) is the consensus step-size associated with the underlying compression scheme. It controls the compression error feedback, denoted by \(\bigl(\dmat(t) - \edvm(t)\bigr)_{t \in \Nz}\), fed into the consensus steps \eqref{eq:update rule}.
Let us highlight the key features of \(\algoname\):
\subsubsection*{The \(\compress\) module}
This module, motivated by \cite{ref:XL-YL-RW-JT-MY-20,ref:YL-ZL-KH-SP-22}, incorporates the compression and the communication steps for every iteration across agents. We introduce the auxiliary states \((\auxdv,\auxgt)\), and instead of directly compressing \(\dmat\) and \(\gtmat\) or directly the Hessian as in \cite{ref:HL-JZ-AMCS-QL-23}, the differences \(\dmat - \auxdv\) and \(\gtmat - \auxgt\) are compressed. The compressed signals are denoted by \(Q_{\dvar}(t) \Let \compscheme \big(\dmat(t) - \auxdv(t) \big)\) and \(Q_{\gtvar}(t) \Let \compscheme \big(\gtmat(t) - \auxgt(t) \big)\), which are utilized to compute the estimates \(\edvm\) and \(\egtm\), respectively. Each agent then communicates these estimates to their neighboring agents. One can show that the compression error follows \(\EE \expecof[\bigg]{\norm{\dmat(t) - \edvm(t)}^2}
\le \EE \expecof[\Big]{\norm{\dmat(t) - \auxdv(t)}^2}\), 
and as \(\norm{\dmat(t) - \auxdv(t)} \xrightarrow[t \ra + \infty]{} 0\) the corresponding compression error also converges to zero asymptotically.

From Algorithm \ref{alg:sec_ord_comp}, if \(Q_z = \compscheme \bigl(Z-H \bigr) = Z\) is imposed (no compression), it results in \(\widehat{Z} = Q_z + H = Z\), and it is not difficult to show that \(\widehat{Z}^w = \Wght Z\). Substituting this in \eqref{eq:update rule}, we recover the original Network-GIANT algorithm 
with the consensus weight matrix \(\wt{\Wght} \Let (1-\constsz)\identity{\agents} + \constsz \Wght\).
\subsubsection*{Gradient tracking}
The second consensus step in \eqref{eq:update rule}, first established in \cite{ref:GQ-NL-17}, tracks the past gradients with \emph{zero error} asymptotically, and is critical for convergence to the exact optimal solution. Indeed, if \(\gtmat(0) = \grd \Objfunc\big(\dmat(0)\big)\) is imposed, one can show that \(\frac{1}{\agents} \ones^{\top}\gtmat(t) = \frac{1}{\agents}\sum_{i=1}^{\agents}\grd \objfunc_i\big(\dvar_i(t)\big)\) for each iterations \(t \in \Nz\), even under the compression scheme. For the proof, we refer to Lemma \ref{it:port_1} in Appendix \ref{subappen:technical}.

\subsubsection*{Second-order oracle}
Approximate Newton-like methods are reported to achieve faster convergence over the first-order gradient based algorithms. We adopt the crux of Network-GIANT, where the sequence of vectors \((\edv_i(t), \egt_i(t))_{i = 1}^{\agents}\) for each \(t\), are exchanged among the agents, instead of the local Hessians directly. This approach results in a per-iteration communication complexity of \(\Bigoh{\big( \agents \dimdv\big)}\) 
only.
Note also that in the convergence analysis, the regularity properties of local Hessian is extensively utilized.

To avoid notational clutter, we will use the symbols \(\dvtuple(t)\),  \(\gttuple(t)\), and \(\G(t+1)\) to represent \(\big(\dmat(t), \auxdv(t), \wauxdv(t) \big)\), \(\big(\gtmat(t), \auxgt(t), \wauxgt(t) \big)\), and \(\grd \Objfunc\bigl(\dmat(t+1)\bigr) - \grd \Objfunc\bigl(\dmat(t)\bigr)\) with \(\G(0) = \grd \Objfunc(0)\) for each \(t\), respectively, in Algorithm \ref{alg:sec_ord_comp}. The stopping time \(\horizon\) is chosen such that for \(t \ge \horizon\), \(\norm{\grd \Objfunc(\dmat(t))} \le c\) for a pre-specified tolerance \(c>0\).
\begin{algorithm}[!ht]
    \SetAlgoLined
    \DontPrintSemicolon
    \SetKwInOut{ini}{Initialize}
    \SetKwInOut{giv}{Data}
    \SetKwInOut{out}{Output}
    
    \giv{Stopping time \(\horizon\), step-size \(\stsz\), consensus step-size \(\constsz \in \lorc{0}{1}\), scaling parameters \((\condv,\congt)\),  consensus weight matrix \(\Wght\)}
    
    \ini{\(\dmat(0), \auxdv(0), \auxgt(0)\), impose \(\gtmat(0) = \grd \Objfunc(\dmat(0))\), 
     \(\wauxdv(0) = \Wght \auxdv(0)\), \(\wauxgt(0) = \Wght \auxgt(0)\)}
    
    \For{\(t = 0, 1, 2, \ldots, \horizon\)}{
        \texttt{Apply the compression scheme:} \label{line:compress_scheme}\\
        \(\begin{aligned}
            &\edvm(t), \wedvm(t), \auxdv(t+1), \wauxdv(t+1) = \compress\bigl(\dvtuple(t)\bigr) \\
            &\egtm(t), \wegtm(t), \auxgt(t+1), \wauxgt(t+1) = \compress\bigl(\gttuple(t)\bigr)
        \end{aligned}\)

        \texttt{Update rule:} \label{line:update_main}\\
        \(\begin{aligned}
            &\dmat(t+1) \gets \dmat(t) - \constsz \bigl(\edvm(t) - \wedvm(t)\bigr) - \stsz \cdmat(t) \\
            &\gtmat(t+1) \gets \gtmat(t) - \constsz \bigl(\egtm(t) - \wegtm(t)\bigr) + \G(t+1)
        \end{aligned}\)
    }
    
    \out{\(\bigl( \dmat(t), \gtmat(t) \bigr)_{t=0}^{\horizon}\)}
    
    \BlankLine
    
    \SetKwFunction{FCompress}{Compress}
    \SetKwProg{Fn}{Procedure}{:}{}
    \Fn{\FCompress{\(Z, H, H^{\wght}\)}}{
        \texttt{Encode: }\(Q_z = \compscheme \bigl(Z-H \bigr) \) \label{line:encode}
        
        \texttt{Update: }\(\widehat{Z} = Q_z + H\) \label{line:update}

        \texttt{Communication: }\(\widehat{Z}^{\wght} = H^{\wght} + \Wght Q_z \) \label{line:cummunicate}

        \texttt{Update:} \label{line:aux_update}\\
        \(\begin{aligned}
            &H \gets (1- \alpha_z)H + \alpha_z \widehat{Z}\\
            &H^{\wght} \gets (1- \alpha_z)H^{\wght} + \alpha_z \widehat{Z}^{\wght}
        \end{aligned}
        \)
        
        \Return{\(\widehat{Z}, \, \widehat{Z}^{\wght}, \, H, \, H^{\wght}\)}
    }
\caption{\algoname:  A compressed approximate Newton-type distributed optimization algorithm}
\label{alg:sec_ord_comp}
\end{algorithm}

\subsection{Convergence analysis}
\label{subsec:convergence}
We establish linear convergence of \(\algoname\) in this section. Define the errors \ \(\opterr(t) \Let \EE \expecof[\Big]{\norm{\avgdmat(t) - \dvar^{\ast}}^2}\) (optimization error),  \( \conerr(t) \Let \EE \expecof[\Big]{\norm{\dmat(t) - \ones \avgdmat(t)}^2}\) (consensus error), \(\gterr (t) \Let \EE \expecof[\Big]{\norm{\gtmat(t) - \ones\avggtmat(t)}^2}\) (gradient tracking error), and the compression errors \(\comerrdv(t) \Let \EE \expecof[\Big]{\norm{\dmat(t) - \auxdv(t)}^2} \) and  \(\comerrgt(t) \Let \EE \expecof[\Big]{\norm{\dmat(t) - \auxgt(t)}^2}\). 
We are ready to present the main theoretical results. We start with following lemma:
\begin{lemma}
    \label{lem: key lemma}
    Consider Algorithm \ref{alg:sec_ord_comp} along with its associated data and notations established in \S\ref{sec:algo}. Suppose that Assumptions \ref{assum:on graph} and \ref{assum:Standard assumptions} hold. Let \(\Wght\) be the consensus weight matrix. Define the vector of errors \(\err(t) \Let \bigl(\opterr(t) \; \conerr(t) \; \gterr(t) \; \comerrdv(t) \; \comerrgt(t) \bigr) \in \Rbb^{\dimdv \times 1}\) for each iteration \(t \in \Nz\) and the vector of the hyper-parameters \(\hypar \Let \bigl(\stsz \; \constsz \; \condv \; \congt \bigr),\) where \(\constsz \in \lorc{0}{1}\) is the consensus step-size, \(\scale>0\) in Definition \ref{def:General Compression Operator} is the scaling parameter associated with the compression scheme, and the parameters satisfy \(\condv, \congt \in \lorc{0}{\frac{1}{\scale}}\). Let \(\wt{\specnorm} \Let (1-\constsz) + \constsz \specnorm\), where \(\specnorm \Let \norm{\Wght   - \frac{1}{\agents}\ones \ones^{\top}}\), \(\Wght \in \Rbb^{\agents \times \agents}\) refers to the weight matrix in Assumption \ref{assum:on graph}, and \(\Cgen>0\) denotes the constant in Definition \ref{def:General Compression Operator}. Define the quantity \(\quanto \Let \norm{\identity{\agents} - \Wght}\). If the condition 
    \(\stsz \leq \min \aset[\big]{\frac{2\lips}{3 \strconv}, \frac{\strconv}{\lips}}\)
    holds, then \(\err(t )\), for each iteration \(t \in \Nz\), is governed by a linear system of inequalities \(\err(t+1) \le \conmat(\hypar) \err(t)\), where the matrix \(\conmat(\hypar)\) is defined in \eqref{eq:final A}, and provided that the following conditions hold:
    \begin{figure*}[!t]
    \begin{align}\label{eq:final A}
        \conmat(\hypar) = \begin{pmatrix}
            1 - \frac{3 \stsz \strconv}{2 \lips} + \frac{\stsz^3 \strconv^3}{2 \lips^3} & \frac{\stsz^2 \lips^2}{\strconv^2 \agents}+\frac{2 \stsz \lips^3}{\strconv^3 \agents} & \frac{\stsz^2 }{\strconv^2 \agents} + \frac{2 \stsz \lips}{\strconv^3 \agents} & 0 & 0 \vspace{1.5mm}\\
            \frac{8 \lips^2 \stsz^2 \agents}{\strconv^2(1-\wt{\specnorm})} & \frac{1+\wt{\specnorm}^2}{2} + \frac{8 \lips^2 \stsz^2 }{\strconv^2 (1- \wt{\specnorm})} & \frac{4 \stsz^2 }{\strconv^2 (1- \wt{\specnorm})} & \frac{2 \constsz^2 \quanto^2 \Cgen^2 }{1-\wt{\specnorm}} & 0 \vspace{1.5mm}\\            
            \frac{24 \lips^4 \stsz^2  \agents}{\strconv^2(1 - \wt{\specnorm})} & \frac{6 \lips^2 \constsz^2  \quanto^2}{1 - \wt{\specnorm}} + \frac{24 \lips^4 \stsz^2}{\strconv^2(1 - \wt{\specnorm})} & \frac{1+ \wt{\specnorm}^2}{2} + \frac{12 \lips^2 \stsz^2}{\strconv^2(1 - \wt{\specnorm})} & \frac{6 \lips^2 \constsz^2  \quanto^2 \Cgen}{1 - \wt{\specnorm}} & \frac{2 \constsz^2 \quanto^2 \Cgen}{1-\wt{\specnorm}} \vspace{1.5mm} \\              
            \frac{4 \lips^2 \stsz^2  \agents  \tdv}{\strconv^2} & \constsz^2 \quant{1} + \frac{4 \lips^2 \stsz^2 \tdv}{\strconv^2} & \frac{2 \stsz^2 \tdv}{\strconv^2} & \quantdv + \constsz^2 \quant{2} & 0 \vspace{1.5mm}\\
            \frac{12 \lips^4 \stsz^2 \agents \tgt}{\strconv^2} & 3 \lips^2 \constsz^2  \quant{3} + \frac{12 \lips^4 \stsz^2 \tgt }{\strconv^2} & \constsz^2 \quant{3} + \frac{6 \lips^2 \stsz^2 \tgt}{\strconv^2} & 3 \lips^2 \constsz^2 \quant{4} & \quantgt + \constsz^2 \quant{3}
        \end{pmatrix}.
    \end{align}
    \end{figure*}
    \begin{align}\label{eq:notations}
        \begin{cases}
            \tdv \Let \frac{3\taudv}{\taudv-1} >1, \, \tgt \Let \frac{3 \taugt}{\taugt-1} >1,\\
            \quantdv = \taudv(1 - \condv \scale \delta) < 1 , \, \quantgt = \taugt(1 - \congt \scale \delta) < 1,\\
            \quant{1} = \tdv \quanto^2, \, \quant{2} = \tdv\Cgen \quanto^2,
            \quant{3} = \tgt  \quanto^2, \, \quant{4} = \tgt \Cgen \quanto^2,
        \end{cases}
    \end{align}
    for \(1 < \taudv < \frac{1}{1 - \condv \scale \delta}\), \(1 < \taugt < \frac{1}{1 - \congt \scale \delta}\), and for \(\delta \in \lorc{0}{1}\).
    \hfill \qedsymbol
\end{lemma}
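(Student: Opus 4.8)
The plan is to derive five scalar recursive inequalities, one for each component of $\err(t)$, and then assemble them row by row into the matrix $\conmat(\hypar)$. Throughout I would work in the matrix notation of \eqref{eq:update rule} and repeatedly invoke three tools: Young's inequality in the form $\norm{a+b}^2 \le (1+\eta)\norm{a}^2 + (1+\eta^{-1})\norm{b}^2$ with a free parameter $\eta>0$ to be tuned; the regularity bounds \eqref{eq:strongly_convex_Lips}; and the general-compression bound \eqref{eq:general compressor operator} together with its scaled contraction. The free parameters $\taudv,\taugt$ in \eqref{eq:notations} are exactly the tuned Young parameters for the two compression-error rows, whose admissible ranges are dictated by requiring the resulting contraction factors $\quantdv,\quantgt$ to stay below one.

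First, the optimization-error row. Averaging the first line of \eqref{eq:update rule} with $\frac{1}{\agents}\ones^{\top}$ annihilates the consensus term, since $\ones^{\top}(\edvm(t)-\wedvm(t)) = \ones^{\top}(\identity{\agents}-\Wght)\edvm(t)=0$ by double stochasticity, leaving $\avgdmat(t+1)=\avgdmat(t)-\frac{\stsz}{\agents}\ones^{\top}\cdmat(t)$. The crux is that the averaged local Newton direction $\frac{1}{\agents}\ones^{\top}\cdmat(t)=\frac{1}{\agents}\sum_{i}\hess\objfunc_i(\dvar_i(t))^{-1}\gtvar_i(t)$ approximates a descent direction for $\objfunc$; using the gradient-tracking invariant $\frac{1}{\agents}\ones^{\top}\gtmat(t)=\frac{1}{\agents}\sum_{i}\grd\objfunc_i(\dvar_i(t))$ (Lemma \ref{it:port_1}) and the curvature bounds \eqref{eq:strongly_convex_Lips}, I would expand $\norm{\avgdmat(t+1)-\dvar^{\ast}}^2$, control the deviation of each local Hessian and gradient from its value at $\avgdmat(t)$ through $\conerr(t)$ and $\gterr(t)$, and isolate a clean descent term in $\opterr(t)$. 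The step-size condition $\stsz\le\min\aset[\big]{\frac{2\lips}{3\strconv},\frac{\strconv}{\lips}}$ is precisely what forces the contraction coefficient $1-\frac{3\stsz\strconv}{2\lips}+\frac{\stsz^3\strconv^3}{2\lips^3}$ into $(0,1)$ and makes the cross-coupling to $\conerr$ and $\gterr$ match row one of \eqref{eq:final A}.

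Next, the consensus and gradient-tracking rows. Multiplying the two lines of \eqref{eq:update rule} by the projector $\identity{\agents}-\frac{1}{\agents}\ones\ones^{\top}$ and writing $\edvm(t)=\dmat(t)-(\dmat(t)-\edvm(t))$, the consensus map $\wt{\Wght}=(1-\constsz)\identity{\agents}+\constsz\Wght$ contracts the disagreement component with factor $\wt{\specnorm}$; splitting off the remainder with Young's inequality produces the diagonal $\frac{1+\wt{\specnorm}^2}{2}$ and routes the two perturbations --- the Newton step scaled by $\stsz$ and the injected compression error scaled by $\constsz\quanto$ --- into the off-diagonal entries, with the compression contribution bounded via \eqref{eq:general compressor operator} and $\quanto=\norm{\identity{\agents}-\Wght}$. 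The gradient-tracking row additionally uses the Lipschitz bound on $\G(t+1)=\grd\Objfunc(\dmat(t+1))-\grd\Objfunc(\dmat(t))$, re-expressed through the one-step increment of $\dmat$ and hence through $\opterr,\conerr,\gterr$; this is the source of the $\lips^2$ and $\lips^4$ factors appearing in rows two and three.

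Finally, the two compression-error rows. Here I would combine the auxiliary-variable update $H\gets(1-\alpha_z)H+\alpha_z\widehat{Z}$ of Algorithm \ref{alg:sec_ord_comp} with the scaled-compression contraction of Definition \ref{def:General Compression Operator}, giving a one-step decay of $\comerrdv$ by the factor $(1-\condv\scale\delta)$; applying Young's inequality with parameter $\taudv$ to absorb the increment $\dmat(t+1)-\dmat(t)$ then yields the diagonal $\quantdv=\taudv(1-\condv\scale\delta)$ together with the $\constsz^2\quanto^2$ and $\Cgen$ terms of the last two rows, and the same computation with $\taugt$ gives $\quantgt$. I expect the \textbf{main obstacle} to be the optimization-error row: in contrast to the purely first-order consensus, tracking and compression rows, it requires controlling the \emph{approximate} Newton direction --- the mismatch between the local inverse Hessians $\hess\objfunc_i(\dvar_i(t))^{-1}$ and a single global curvature --- and folding three nested perturbations (Hessian inexactness, gradient-tracking lag, and consensus disagreement) into one quadratic estimate whose leading coefficient is exactly the stated cubic polynomial in $\stsz$. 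Matching those constants to \eqref{eq:final A} while retaining the contraction is the delicate part; the remaining four rows are structurally parallel to the compressed first-order gradient-tracking analyses of \cite{ref:YL-ZL-KH-SP-22,ref:JZ-KY-LX-23} and should follow by routine, if lengthy, Young-inequality bookkeeping.
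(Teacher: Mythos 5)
Your proposal follows essentially the same route as the paper's proof: the paper likewise derives the five rows separately --- averaging the update (using $\ones^{\top}(\identity{\agents}-\Wght)=0$ and the tracking invariant) together with curvature bounds and a tuned Young parameter for the optimality row, the contraction of $\wt{\Wght}$ with Young splitting for the consensus and tracking rows, and the auxiliary-state update combined with the scaled-compression contraction and Young parameters $\taudv,\taugt$ for the two compression rows --- before stacking the inequalities into $\conmat(\hypar)$. The only detail your sketch leaves implicit is the paper's use of the integral (mean-value) representation $\grd\objfunc(\avgdmat(t)^{\top})-\grd\objfunc(\dvar^{\ast})=\int_0^1\hess\objfunc\bigl(\dvar^{\ast}+\varsigma(\avgdmat(t)^{\top}-\dvar^{\ast})\bigr)(\avgdmat(t)^{\top}-\dvar^{\ast})\,\odif{\varsigma}$, which is how the damped-Newton contraction factor $1-\stsz\strconv/\lips$ (and hence the cubic leading coefficient) is actually extracted in the first row.
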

Readers are referred to Appendix \ref{subappen:main proof} for a detailed proof of Lemma \ref{lem: key lemma}, the outline of which is sketched below.
\subsubsection*{Sketch of the proof of Lemma \ref{lem: key lemma}} 
The proof relies on the fact that if \(\stsz \le \frac{2\lips}{3 \strconv} \), all entries of \(\conmat(\hypar)\) are nonnegative, which is essential for \(\conmat(\hypar)\) to be a positive matrix. Moreover, \(\stsz \le \frac{\strconv}{\lips}\) ensures  global linear convergence of the damped  Newton update step. Hence, \(\stsz \leq \min \aset[\big]{\frac{2\lips}{3 \strconv}, \frac{\strconv}{\lips}}\); for more detail refer to Appendix \ref{subappen:main proof}. The idea of the proof, adopted from \cite{ref:GQ-NL-17}, involves upper-bounding each of the errors in \(\err(\cdot)\) in terms of their values computed at the previous instant. To bound the optimality error, we utilize Assumption \ref{assum:Standard assumptions}, and establish upper and lower bounds of the term \(\frac{1}{\agents} \sum_{i=1}^{\agents} \hess \objfunc_i(\dvar_i(t)){\inverse} \Bigl( \grd \objfunc \bigl(\avgdmat(t)^{\top}\bigr)  - \grd \objfunc(\dvar^{\ast})\Big)\), which is subsequently used to compute the optimality error \((\opterr(t))_{t \in \Nz}\). Computation of the consensus error \((\conerr(t))_{t \in \Nz}\) depends on the assertion that \(\avggtmat(t) = \frac{1}{\agents} \ones^{\top} \grd \Objfunc(\dmat(t)) = \grd \av{\Objfunc}\big( \dmat(t)\big)\) for each \(t\). Calculation of the gradient tracking error relies on the estimates of \(\norm{\gtmat(t)}\) and \(\norm{\grd \Objfunc\big(\dmat(t+1)\big) - \Objfunc\big(\dmat(t)\big)}\). Finally, the compression errors \((\comerrdv(t), \comerrgt(t))\) at each \(t
\) are estimated by extensively using \eqref{eq:general compressor operator}.

The next result establishes a linear convergence rate.
\begin{theorem}
    \label{thm:key thm}
    Let the hypothesis of Lemma \ref{lem: key lemma} hold. If, \((\stsz, \constsz)\) is chosen such that \(\spec\bigl(\conmat(\hypar)\bigr) < 1\), then the sequence  \((\err(t))_{t \in \Nz}\) converges to zero at a linear rate \(\Bigoh\Bigl(\spec\bigl(\conmat(\hypar) \bigr)^t\Bigr)\). \hfill \qedsymbol
\end{theorem}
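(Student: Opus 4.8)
The plan is to read Theorem~\ref{thm:key thm} as the standard spectral consequence of the linear comparison system $\err(t+1)\le \conmat(\hypar)\,\err(t)$ furnished by Lemma~\ref{lem: key lemma}, where the inequality is understood entry-wise. Write $\conmat\equiv\conmat(\hypar)$. First I would iterate this inequality to a closed form. Every coordinate of $\err(t)$ is an expectation of a squared norm, hence nonnegative, and under the step-size bound $\stsz\le\frac{2\lips}{3\strconv}$ of Lemma~\ref{lem: key lemma} every entry of $\conmat$ in \eqref{eq:final A} is nonnegative. A nonnegative matrix is monotone on the nonnegative orthant, so if $0\le\err(t)\le\conmat^{t}\err(0)$ then $\conmat\,\err(t)\le\conmat^{t+1}\err(0)$; chaining this with $\err(t+1)\le\conmat\,\err(t)$ and inducting on $t$ gives the comparison bound $\err(t)\le\conmat^{t}\err(0)$ for all $t\in\Nz$.

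Second, I would convert the matrix-power bound into geometric decay. Choosing a monotone vector norm, say $\norm{\cdot}$ taken as the coordinate-wise $\infty$-norm, the sandwich $0\le\err(t)\le\conmat^{t}\err(0)$ yields $\norm{\err(t)}\le\norm{\conmat^{t}}\,\norm{\err(0)}$, with $\norm{\conmat^{t}}$ the induced operator norm. Gelfand's formula $\spec(\conmat)=\lim_{t\to\infty}\norm{\conmat^{t}}^{1/t}$ then shows that for each $\rho$ with $\spec(\conmat)<\rho<1$ there is a constant $C_{\rho}>0$ such that $\norm{\conmat^{t}}\le C_{\rho}\rho^{t}$. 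Already this delivers $\norm{\err(t)}=\Bigoh\bigl(\rho^{t}\bigr)\to 0$, i.e.\ linear convergence whenever the hypothesis $\spec(\conmat)<1$ holds.

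To obtain the sharp rate $\Bigoh\bigl(\spec(\conmat)^{t}\bigr)$ claimed in the statement, rather than $\Bigoh\bigl(\rho^{t}\bigr)$ for $\rho>\spec(\conmat)$, I would appeal to Perron--Frobenius theory. The diagonal entries of $\conmat$ in \eqref{eq:final A} are strictly positive, and the nonzero off-diagonal pattern renders the associated directed graph strongly connected, so $\conmat$ is irreducible; the self-loops arising from the positive diagonal then guarantee aperiodicity, making $\conmat$ primitive. Consequently $\spec(\conmat)$ is a simple eigenvalue strictly dominating the modulus of every other eigenvalue. Passing to the Jordan form, $\conmat^{t}=\spec(\conmat)^{t}\bigl(v\,u^{\top}+o(1)\bigr)$ for the right and left Perron vectors $v,u$, so $\norm{\conmat^{t}}=\Theta\bigl(\spec(\conmat)^{t}\bigr)$ with no polynomial prefactor. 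Combined with the sandwich bound this gives $\norm{\err(t)}\le C\,\spec(\conmat)^{t}\,\norm{\err(0)}$, which is the assertion.

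The induction and the norm bookkeeping are routine; the one step that genuinely needs care is certifying that $\conmat$ is primitive (equivalently, that no polynomial-in-$t$ factor survives), since this is precisely what upgrades the elementary $\Bigoh\bigl(\rho^{t}\bigr)$ estimate to the advertised $\Bigoh\bigl(\spec(\conmat)^{t}\bigr)$ rate. Should verifying primitivity directly from the closed form in \eqref{eq:final A} prove cumbersome, the safe fallback is to report the rate as $\Bigoh\bigl(\rho^{t}\bigr)$ for arbitrary $\rho\in\bigl(\spec(\conmat),1\bigr)$, which follows from Gelfand's formula alone and needs no structural hypothesis on $\conmat$ beyond nonnegativity.
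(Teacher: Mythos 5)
Your proof is correct, and its skeleton is the same comparison-system argument the paper gives: under \(\stsz\le\frac{2\lips}{3\strconv}\) the matrix \(\conmat(\hypar)\) is entrywise nonnegative, so the inequality \(\err(t+1)\le\conmat(\hypar)\,\err(t)\) chains (by monotonicity of nonnegative matrices on the nonnegative orthant) to \(0\le\err(t)\le\conmat(\hypar)^{t}\err(0)\), and the hypothesis \(\spec\bigl(\conmat(\hypar)\bigr)<1\) forces the right-hand side to vanish. Where you genuinely depart from the paper is in certifying the \emph{rate}, and there your treatment is more careful. The paper invokes \cite[Theorem \(5.6.12\)]{ref:RAJ-CRJ-13} only to conclude \(\conmat(\hypar)^{t}\to 0\) and then asserts the rate \(\Bigoh\bigl(\spec(\conmat(\hypar))^{t}\bigr)\) with no further argument; as you observe, this step is not automatic, because a peripheral eigenvalue with a nontrivial Jordan block would produce a polynomial prefactor \(t^{k}\spec(\conmat(\hypar))^{t}\), and Gelfand's formula alone gives only \(\Bigoh(\rho^{t})\) for each \(\rho\in\bigl(\spec(\conmat(\hypar)),1\bigr)\). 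Your Perron--Frobenius argument closes exactly this gap: the diagonal of \eqref{eq:final A} is strictly positive under the step-size bound, and its off-diagonal pattern is strongly connected provided \(\stsz,\constsz>0\) and \(\quanto,\Cgen>0\) (a hypothesis you should state explicitly, since the entries coupling rows \(2,3\) to columns \(4,5\) vanish when \(\constsz\quanto\Cgen=0\)), hence \(\conmat(\hypar)\) is primitive and the prefactor disappears. Two refinements: irreducibility alone already yields the upper bound \(\norm{\conmat(\hypar)^{t}}=\Bigoh\bigl(\spec(\conmat(\hypar))^{t}\bigr)\), since the peripheral eigenvalues of an irreducible nonnegative matrix are all simple, so primitivity is needed only for your stronger \(\Theta\)-type statement; and your fallback \(\Bigoh(\rho^{t})\) rate is what follows with no structural check at all --- it is, in effect, what the paper's own proof actually establishes.
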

\begin{proof}
    If \(\stsz \le \frac{2\lips}{3 \strconv} \), all entries of \(\conmat(\hypar)\) are nonnegative, implying that \(\conmat(\hypar)^t\) for each \(t\) is nonnegative. From the hypothesis, \((\stsz,\constsz)\) is chosen such that \(\spec(\conmat(\hypar)) < 1\), then from \cite[Theorem \(5.6.12\)]{ref:RAJ-CRJ-13} we deduce that \(\conmat(\hypar)^t \xrightarrow[t \ra +\infty]{} 0\) for every \((\condv, \congt)\).  Consequently,  for every initial condition \(\err(0)\) and every \((\condv, \congt)\), \(0 \le \limsup_{t \uparrow + \infty} \err(t) \le \liminf_{t \uparrow + \infty} \conmat(\hypar)^t \err(0) \xrightarrow[t \ra +\infty]{}0\) at a linear rate \(\Bigoh\big(\spec(\conmat(\hypar))^t\big)\). The proof is now complete.
\end{proof}
\begin{remark}[On the linear rate of convergence]
    \label{rem:linear rate}
    Note that the linear convergence rate established in Theorem \ref{thm:key thm} for \(\algoname\)  depends heavily on \(\lips\) and \(\strconv\), and also on the connectivity parameter \(\specnorm\) of the underlying graph.
\end{remark}
The next result establish a set of sufficient conditions on \((\stsz,\constsz)\) to ensure that \(\spec\bigl(\conmat(\hypar)\bigr) < 1\). Let us denote by 
\[
    \cn \Let \frac{\lips}{\strconv} > 1,
\]
the conditional number of the Hessian of the objective function \(\objfunc\) corresponding to the optimization problem \eqref{eq:key prob}. 
\begin{theorem}
    \label{th:Main result_str_conv_case}
    Suppose that the hypothesis of Lemma \ref{lem: key lemma} holds. Let \((\condv,\congt )\in \lorc{0}{\frac{1}{\scale}} \times \lorc{0}{\frac{1}{\scale}}\). Define the quantities
    \begin{align*}
        \ol{\specnorm} \Let 1 - \wt{\specnorm},
    \end{align*}
    and recall from \eqref{eq:final A} the expression of \(\conmat(\hypar)\).
    If the step-size and the consensus step-size satisfy
    \begin{align}
        \label{eq:stsz_constsz}
        &\stsz \le \min \left\{\frac{\eps_1}{\frac{3 \cn^3 \eps_2}{\agents} + \frac{3 \cn\eps_3}{\strconv^2 \agents}}, \cn \sqrt{\frac{2}{3}}, \frac{\constsz (1 - \specnorm)\cn}{6}, \frac{\constsz}{\cn},\cdots \right. \\ 
        &\left.\cdots \frac{1}{4 \cn } \sqrt{\frac{\constsz(1 - \specnorm)\ol{\specnorm} \eps_2}{\wh{\epsln}}}, \frac{1}{12 \cn }\sqrt{\frac{\constsz(1 - \specnorm)\ol{\specnorm} \eps_3}{\wh{\epsln}}}\right\} \nn\\
        & \constsz \le \min \aset[\Bigg]{1, \sqrt{\frac{(1 - \quantdv)\eps_4}{2 \tdv \wh{\epsln} + \ol{\epsln} + \frac{\eps_4}{2 \cn^2}}}, \sqrt{\frac{(1 - \quantgt)\eps_5}{6 \tgt \wh{\epsln} +  \Breve{\epsln} + \frac{\eps_5}{2 \cn^2}}} }
    \end{align}
    where \(\wh{\epsln} \Let 2 \agents \eps_1 + 2 \eps_2 + \eps_3\), \(\ol{\epsln} \Let \quant{1} \eps_2 + \quant{2}\eps_4\), \(\Breve{\epsln} \Let 3 \quant{3} \eps_2 + \quant{3}\eps_3 + 3 \quant{4}\eps_4 + \quant{3}\eps_5\), and
    \(\eps \Let \big(\eps_1, \eps_2, \lips^2 \eps_3, \eps_4, \lips^2 \eps_5 \big)\) is a positive vector with \(\eps_i > 0\) for each \(i =1,2,\ldots,5\), such that 
    \begin{align}\label{eq:system_ineq}
    \begin{aligned}
        &\frac{\eps_1}{\eps_2} \ge \frac{6 \cn^4}{\agents} + \frac{6 \cn^2}{\strconv^2 \agents} \frac{\eps_3}{\eps_2}, \, \frac{\eps_4}{\eps_2} \le \frac{\ol{\specnorm}(1 - \specnorm)}{8 \constsz^2 \quanto^2 \Cgen^2}, \, \text{and } \\
        &3 \eps_2 + 3 \Cgen \eps_4 + \Cgen \eps_5 \le \frac{\ol{\specnorm}(1 - \specnorm)\eps_3}{24 \constsz \quanto^2}. 
        \end{aligned}
    \end{align}
    Then \(\spec\big(\conmat(\hypar) \big)  < 1\).
    \end{theorem}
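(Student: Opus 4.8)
The plan is to exhibit a strictly positive test vector for which $\conmat(\hypar)$ contracts entrywise, and then invoke the standard fact that a nonnegative matrix admitting such a vector has spectral radius strictly below one. Under the hypothesis of Lemma \ref{lem: key lemma} the bound $\stsz \le \frac{2\lips}{3\strconv}$ already forces every entry of $\conmat(\hypar)$ in \eqref{eq:final A} to be nonnegative, and together with $\constsz \in \lorc{0}{1}$ and $\condv,\congt \in \lorc{0}{\frac{1}{\scale}}$ we get $\wt{\specnorm} < 1$, $\quantdv < 1$, $\quantgt < 1$ from \eqref{eq:notations}. For such a nonnegative matrix the similarity transform $D^{-1}\conmat(\hypar) D$ with $D \Let \mathrm{diag}(\eps)$ preserves the spectrum while turning the $i$-th row sum into $(\conmat(\hypar)\eps)_i/\eps_i$; since the spectral radius of a matrix is dominated by its maximum absolute row sum, a strict entrywise inequality $\conmat(\hypar)\eps < \eps$ yields $\spec(\conmat(\hypar)) \le \max_i (\conmat(\hypar)\eps)_i/\eps_i < 1$. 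The natural candidate is exactly the vector $\eps = (\eps_1,\eps_2,\lips^2\eps_3,\eps_4,\lips^2\eps_5)$ from the statement, where the $\lips^2$-weighting on the third and fifth coordinates cancels the surplus powers of $\lips$ appearing in the gradient-tracking and gradient-compression rows.

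It then remains to verify the five scalar inequalities $(\conmat(\hypar)\eps)_i < \eps_i$ and to check that \eqref{eq:stsz_constsz}--\eqref{eq:system_ineq} are precisely what makes each of them hold. I would proceed row by row. For the first (optimization) row the diagonal entry contributes the drift $-\frac{3\stsz}{2\cn} + \frac{\stsz^3}{2\cn^3}$; the cap $\stsz \le \cn\sqrt{2/3}$ keeps the cubic term below $\tfrac{2}{9}$ of the linear one, leaving a net negative drift of order $\stsz/\cn$, and the remaining bound $\stsz \le \eps_1/\bigl(\tfrac{3\cn^3\eps_2}{\agents}+\tfrac{3\cn\eps_3}{\strconv^2\agents}\bigr)$ together with the proportioning condition $\tfrac{\eps_1}{\eps_2} \ge \tfrac{6\cn^4}{\agents}+\tfrac{6\cn^2}{\strconv^2\agents}\tfrac{\eps_3}{\eps_2}$ of \eqref{eq:system_ineq} guarantees this drift dominates the $\tfrac{1}{\agents}$-scaled consensus and gradient-tracking couplings. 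For the second (consensus) and third (gradient-tracking) rows the diagonal entries equal $\frac{1+\wt{\specnorm}^2}{2}$ plus a term of order $\stsz^2$, whose built-in slack satisfies $1 - \frac{1+\wt{\specnorm}^2}{2} \ge \frac{\ol{\specnorm}}{2}$; the $\stsz^2$ self-terms are absorbed by the two square-root bounds $\frac{1}{4\cn}\sqrt{\cdot}$ and $\frac{1}{12\cn}\sqrt{\cdot}$, while the compression-feedback off-diagonals are controlled by the inequality $\frac{\eps_4}{\eps_2}\le\frac{\ol{\specnorm}(1-\specnorm)}{8\constsz^2\quanto^2\Cgen^2}$ and the third inequality of \eqref{eq:system_ineq}. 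Finally, the fourth and fifth (compression) rows have contractive diagonals $\quantdv + \constsz^2\quant{2}<1$ and $\quantgt + \constsz^2\quant{3}<1$ with slacks $1-\quantdv$ and $1-\quantgt$; using $\stsz \le \constsz/\cn$ from \eqref{eq:stsz_constsz} to convert every $\frac{\lips^2\stsz^2}{\strconv^2}$-term into a $\constsz^2$-term, the full rows collapse to $\constsz^2(\cdot) < (1-\quantdv)\eps_4$ and $\constsz^2(\cdot) < (1-\quantgt)\lips^2\eps_5$, whose solved forms are exactly the two upper bounds on $\constsz$, with the residual masses gathered into $\wh{\epsln},\ol{\epsln},\Breve{\epsln}$.

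I expect the main obstacle to be the first row. Unlike the other four, whose diagonal contraction is inherited for free from the spectral gap ($\wt{\specnorm}<1$) or from the compression contraction ($\quantdv,\quantgt<1$), the optimization row has diagonal entry numerically close to $1$, so its only contraction is the slender $O(\stsz)$ margin produced by the damped Newton step; one must show this margin still dominates the coupling terms after dividing through by the competing coordinates $\eps_2$ and $\lips^2\eps_3$. This is where the cubic term $\frac{\stsz^3}{2\cn^3}$ must be tamed (hence the $\cn\sqrt{2/3}$ cap) and where the cross-coordinate coupling is tightest, forcing the simultaneous proportioning constraints of \eqref{eq:system_ineq}. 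A secondary point worth recording is that \eqref{eq:system_ineq} is jointly satisfiable in a triangular fashion: one may fix $\eps_2,\eps_3$, take $\eps_4,\eps_5$ small enough for the second and third constraints, and then $\eps_1$ large enough for the first, so that a valid positive test vector $\eps$ always exists and the conclusion $\spec(\conmat(\hypar))<1$ follows.
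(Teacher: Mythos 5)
Your proposal is correct and takes essentially the same route as the paper: the paper also verifies, row by row, that the conditions \eqref{eq:stsz_constsz}--\eqref{eq:system_ineq} force \(\conmat(\hypar)\,\eps \le \bigl(1 - \frac{\stsz}{2\cn}\bigr)\eps\) for exactly the weighted test vector \(\eps = (\eps_1, \eps_2, \lips^2\eps_3, \eps_4, \lips^2\eps_5)\), and then concludes \(\spec\bigl(\conmat(\hypar)\bigr) \le 1 - \frac{\stsz}{2\cn} < 1\) by citing \cite[Corollary \(8.1.29\)]{ref:RAJ-CRJ-13}, which is precisely the diagonal-similarity/row-sum fact you prove inline. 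The only cosmetic difference is that the paper's target \(\bigl(1-\frac{\stsz}{2\cn}\bigr)\eps\) yields an explicit contraction factor while your strict inequality \(\conmat(\hypar)\,\eps < \eps\) gives only \(\spec < 1\); the row computations behind both are identical.
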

The proof is deferred to Appendix \ref{subappen:main proof strong convex}.

\noindent We acknowledge that the conditions in Theorem \ref{th:Main result_str_conv_case} may not be tight. Nevertheless, they serve two purposes: first, they guarantee the feasibility of the linear system of inequalities established in Lemma \ref{lem: key lemma}; and second, they highlight the bottlenecks introduced by various factors --- such as the condition number of the Hessian, the spectral radius of the underlying network, and the chosen compression scheme --- on the step-size \(\stsz\) and the convergence rate.
\section{Experiments}
\label{sec:experiments}
Empirical assessment of the performance of \(\algoname\) against gradient-based algorithms \(\cgt\), \(\lead\), \(\cold\), and approximate Newton-based algorithms \cite{ref:HL-JZ-AMCS-QL-23} (which we refer to as \(\comphess\) in our subsequent discussions) is carried out keeping the centralized Newton-Raphson algorithm as the baseline. The experiments were conducted on two classes of problems --- distributed regularized ridge regression and distributed binary classification with regularized logistic regression --- under different compression schemes. 
\subsection{Compression schemes used}\label{subsec:comp employed}
\begin{itemize}[leftmargin=*, label = \(\circ\)]
    \item \textit{Unbiased compression:} \(q\)-norm \(b-\)bits quantization \cite[Assumption \(2\)]{ref:XL-YL-RW-JT-MY-21} (qNbB-Q) given by \(\ucompop(\dvar) = \big(\norm{\dvar}_{\infty} 2^{-(b-1)} \text{sign}(\dvar) \big)\hada \left \lfloor \frac{2^{(b-1)} \abs{\dvar}}{\norm{\dvar}_{\infty} }+ u\right \rfloor\),
    where \(b =2\) and \(q = + \infty\) are considered, \(u\) is a uniformly distributed in \([0,1]^{\dimdv}\), and `\(\circ\)' refers to the standard Hadamard product. It is reported in \cite{ref:AK-SS-MJ-19} that the number of bits per iteration required during transmission is equal to \((1 + b)\dimdv \agents t\), where \(t \in \Nz\) is the iteration.;
    \item \textit{Biased compression \cite[\S~\(2.2\)]{ref:AB-SH-PR-MS-23}:} We considered the Random-\(k\) sparsification (Rk-S) method, given by the expressions \(\bcompop(\dvar) = \dvar \hada e\), where a set of \(k\)-elements of \(\dvar\) are transmitted, and the elements of \(e\) satisfies: \(e_i = 1\) w.p. \(\frac{k}{\dimdv}\) and \(e_i = 0\) w.p. \(1 - \frac{k}{\dimdv}\),
    and the Top-\(k\) sparsification (Tk-S) method, given by \(\bcompop(\dvar) = \dvar \hada e \), where the elements of \(\dvar\) is arranged in decreasing order and a subset of \(k\) largest absolute values are considered. Here the vector \(e\) satisfies: \(e_{i_l} = 1\) for \(l \le k\) and \(e_{i_l} = 0\) for \(l \ge k\). In the experiments we pick \(k\) to be \(5\) and \(3\) for the random-\(k\) and the top-\(k\) compression schemes, respectively. For Random-\(k\), the number of bits per iteration required during transmission is
    equal to \((32 + \left \lceil \log_2 \dimdv \right \rceil) k\agents t\) (assuming that \(32\) bits are utilized to encode each entries), and for Top-\(k\), we require twice as many bits, i.e., \((64 + \left \lceil \log_2 \dimdv \right \rceil) k\agents t\). Top-k when implemented for \(\comphess\) with singular value decomposition requires \(64k(1+\dimdv)\agents t\) number of bits per iterations for transmission.
    \item \textit{General compression \cite[\S~\(II\)]{ref:YL-ZL-KH-SP-22}: } The \(q\)-norm-signed compression (qNS-C) scheme defined by \(\gcompop(\dvar) = \norm{\dvar}_q \text{sign}(\dvar)\) is considered. Throughout all the experiments, we fix \(q = +\infty\). For this compression, the number of bits per iteration required during transmission is
    equal to \((\dimdv +32)\agents t\) with the number of bits being \(32\); see \cite[Appendix C]{ref:AK-TL-SUS-MJ-19}.
\end{itemize}

\subsection{Ridge regression with synthetic data}\label{subsec:rr}

\subsubsection*{Problem setup}
The problem under consideration is an unconstrained distributed optimization problem given by 
\begin{align}
    \label{eq:ridge regression}
    \min_{\dvar \in \Rbb^{\dimdv}} \objfunc(\dvar) = \frac{1}{\agents} \sum_{i=1}^{\agents} \norm{A_i \dvar - B_i}^2 + \regu \norm{\dvar}^2
\end{align}
where \(\regu>0\) is the regularizer, \(A_i \in\Rbb^{m_i \times \dimdv}\), \(b_i \in \Rbb^{m_i \times 1}\) for each \(i \in \aset[]{1,2,\ldots, \agents}\), respectively. Here \(m_i\) is the number of samples assigned to the \(i\)-th agent. The optimization problem \eqref{eq:ridge regression} admits a unique solution given by \(\dvar^{\ast} = \big(\sum_{i=1}^{\agents}A_i A_i^{\top} + \agents \regu \identity{\dimdv}\big){\inverse} \sum_{i =1}^{\agents}A_ib_i\). 

For the experiments, we considered an undirected graph having \(\specnorm =   0.8727\), with \(\agents =20\) agents, connected in a ring network. We assumed that each agent can exchange data with their \(1\)-hop neighbors. The consensus weight matrix was constructed using the Metropolis-Hastings algorithm \cite{ref:LX-SB-SJK-07}.

\subsubsection*{Data generation and distribution}
For the experiment we generated \(N\) many samples \((a_j,b_j) \in \Rbb^{1 \times \dimdv} \times \Rbb\), where \(j = 1,2, \ldots, N\). These were generated by following the steps enumerated in \cite[\S5]{ref:YL-ZL-KH-SP-22}.
The generated data was further randomly reshuffled to reduce bias and distributed homogeneously among the agents such that each agent received \(m_i = \left \lfloor \frac{N}{\agents}\right \rfloor\) number of samples. Throughout, we picked \(p = 20\), \(\agents = 10\), \(N = 500\).\footnote{For reproducibility, we fix the seed used to generate the random variables to \(42\) across all experiments and reported results.}

\subsubsection*{Initialization}
The variables \(\dmat(0), \auxdv(0), \auxgt(0), \gtmat(0)\) were initialized uniformly. We imposed \(\grd \Objfunc(\dmat(0)) = \gtmat(0)\),  \(\wauxdv(0) = \Wght \auxdv(0)\), and  \(\wauxgt(0) = \Wght \auxgt(0)\). 

\subsubsection*{Hyper-parameters}
We reveal that the hyper-parameters were selected to obtain the best performance for the algorithms. We fixed \(\regu = 0.5\),  the number of iterations to \(5000\), the consensus step-size \(\constsz = 0.6\), and \((\condv, \congt)\) to be \((1,1)\) across all experiments. Table \ref{tab:prog-comp1} below lists the hyper-parameters chosen for the experiments.
\begin{table}[htpb]
\centering
\begin{adjustbox}{max width=\linewidth}
\begin{tblr}{lccccc}
\hline[2pt]
\SetRow{azure9}
Algorithm & qNbB-Q & Rk-S & Tk-S & qNS-C \\
\hline[1pt]
C-GT \cite{ref:YL-ZL-KH-SP-22} 
    & 0.013 & 0.013 & 0.0015 & 0.0112 \\
\algoname 
    & 0.0095 & 0.0012 & 0.006 & 0.021 \\
\hline[2pt]
\end{tblr}
\end{adjustbox}
\caption{Step-size \(\stsz\) for different compression schemes.}
\label{tab:prog-comp1}
\end{table}
\subsubsection*{Results and discussions}
The performance of \(\algoname\) was compared with \cite{ref:YL-ZL-KH-SP-22} on the synthetic data set. As mentioned above, we choose the parameter such that the best performance is obtained. Figure \ref{fig:fourplots} summarizes our findings. We observed that \(\algoname\) under qNS-C is the most communication-efficient algorithm, followed by qNbB-Q. Moreover, we draw attention to the fact that \(\algoname\) consistently outperformed \(\cgt\) across all the considered compression schemes. Comparing \(\algoname\) with the uncompressed algorithms Network-GIANT and \cite{ref:GQ-NL-17} (see Figures \ref{subfig:qnb_con}-\ref{subfig:qnb_gt}), we observed that \(\algoname\) performs on par with them when the comparison is done with respect to \emph{iterations}, consistent with the theoretical claims. This is expected because the compression error converges to zero asymptotically. 

\begin{figure*}[t]
    \centering
    \begin{subfigure}{0.22\textwidth}
        \includegraphics[width=\linewidth, height = 2.6cm]{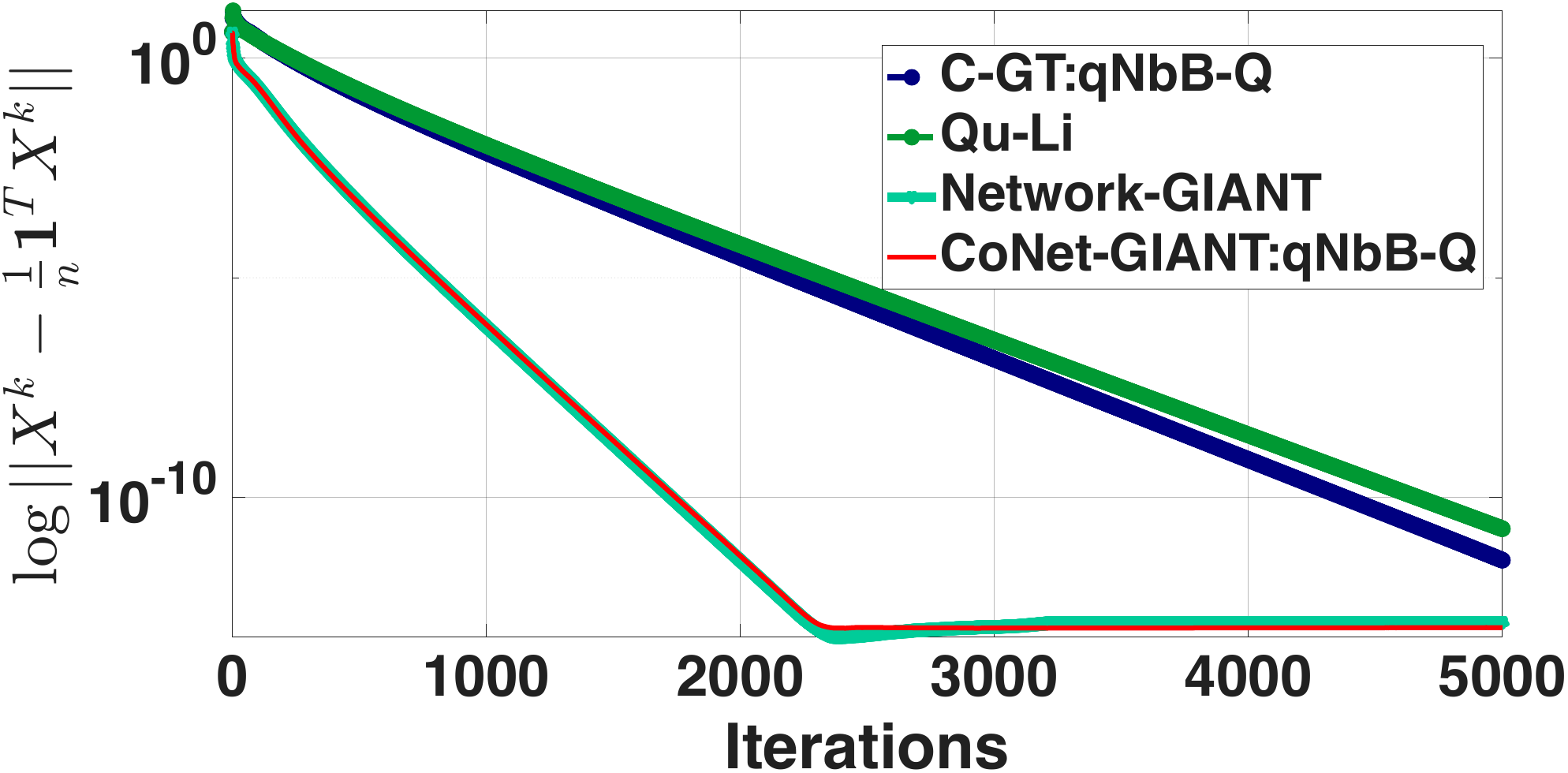}
        \caption{}
        \label{subfig:qnb_con}
    \end{subfigure}\hfill
    \begin{subfigure}{0.22\textwidth}
        \includegraphics[width=\linewidth,height = 2.6cm]{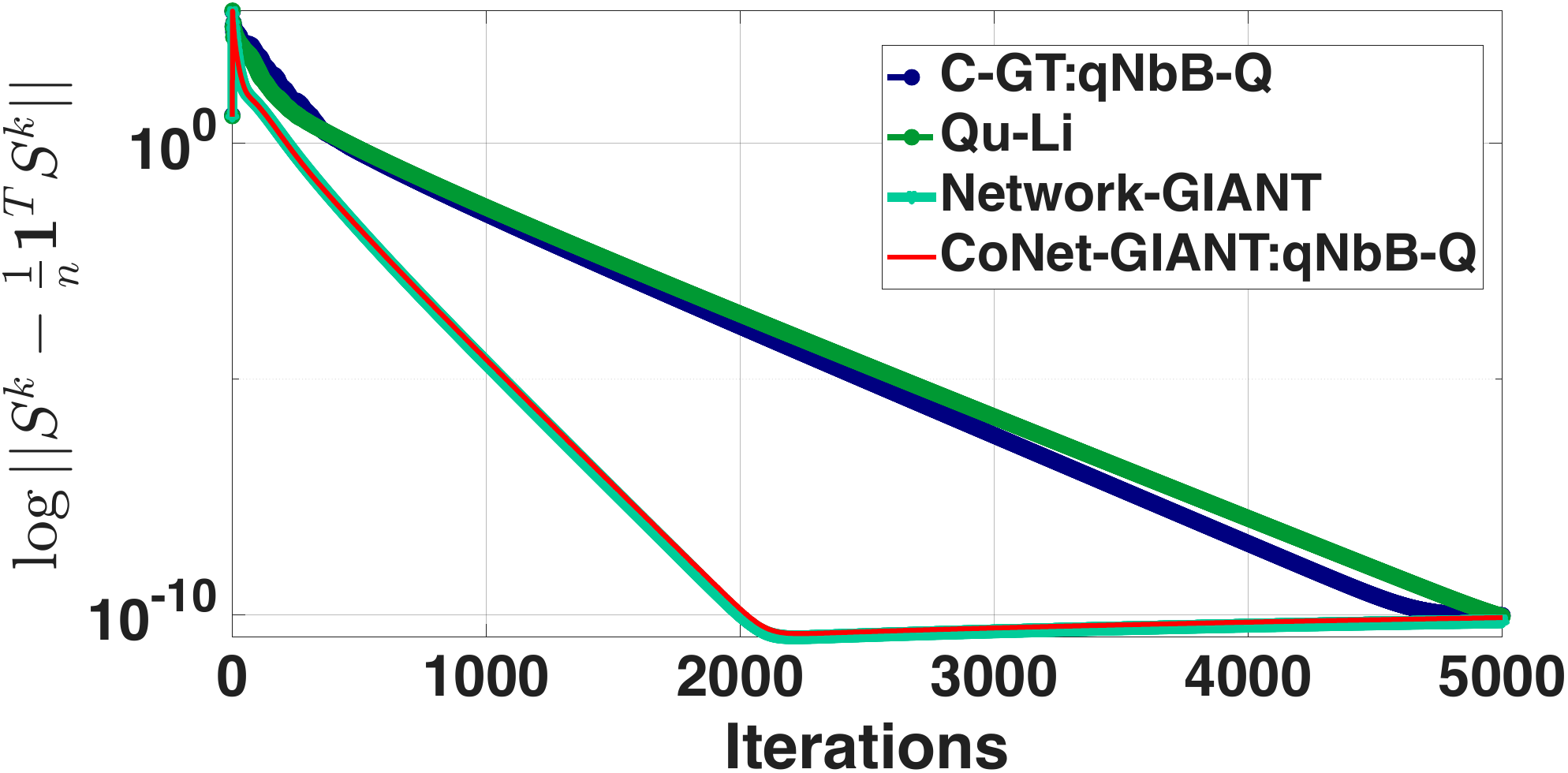}
        \caption{}
        \label{subfig:qnb_opt}
    \end{subfigure}\hfill
    \begin{subfigure}{0.22\textwidth}
        \includegraphics[width=\linewidth,height = 2.6cm]{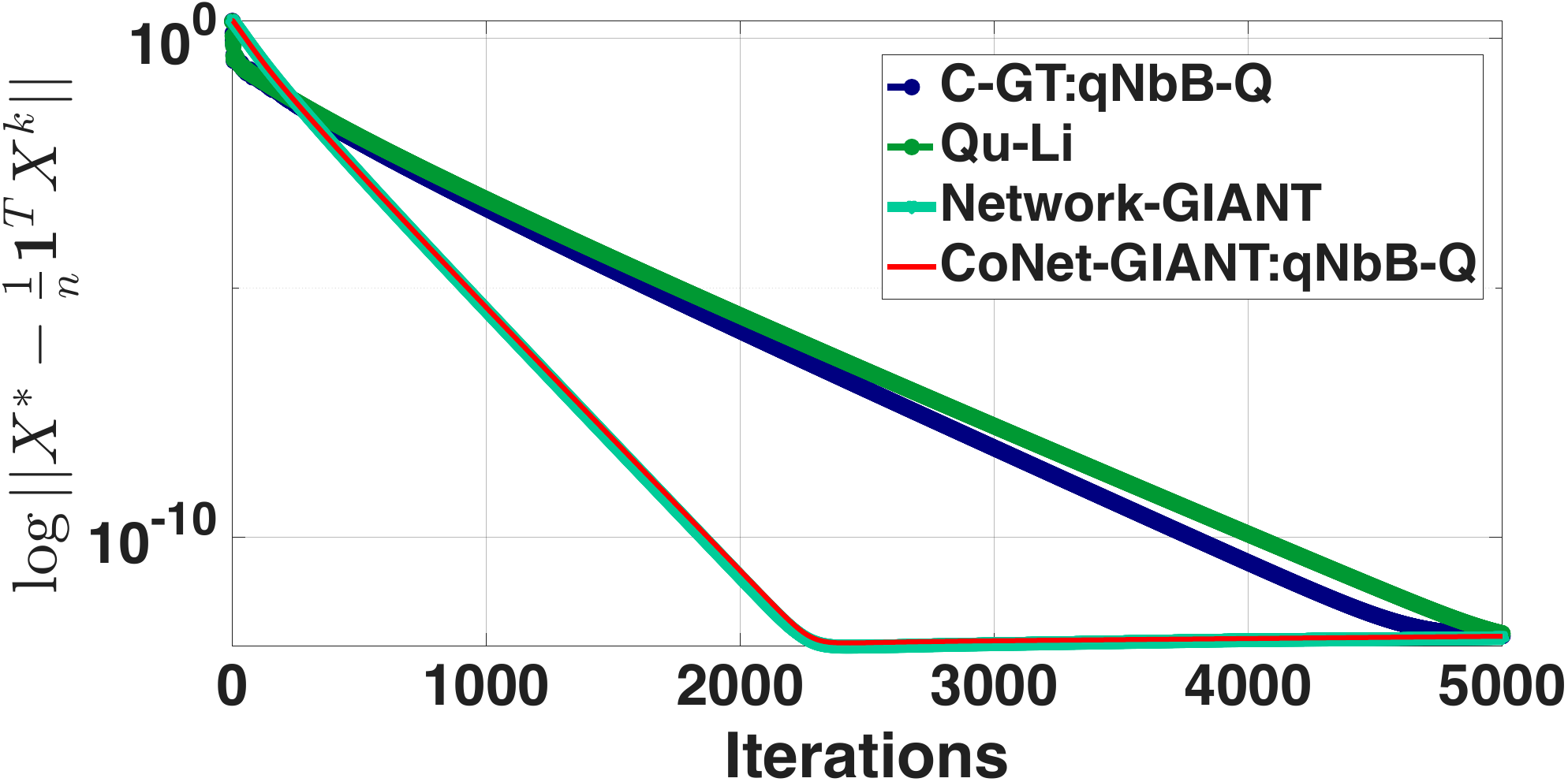}
        \caption{}
        \label{subfig:qnb_gt}
    \end{subfigure}\hfill
    \begin{subfigure}{0.28\textwidth}
        \includegraphics[width=\linewidth,height = 2.8cm]{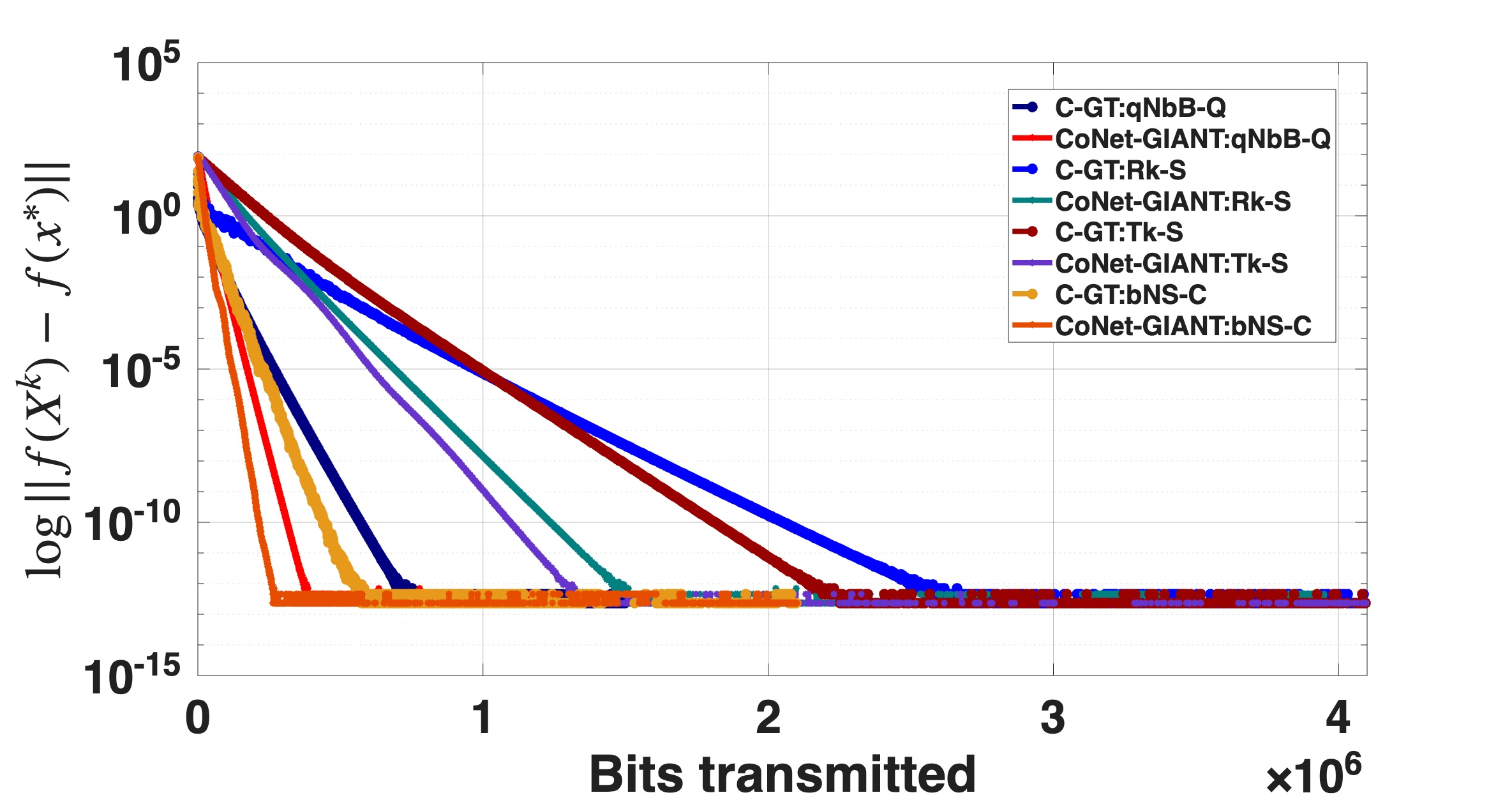}
        \caption{}
        \label{subfig:qnb_residual}
    \end{subfigure}
    
    \caption{Figures \ref{subfig:qnb_con}-\ref{subfig:qnb_gt} compare the errors admitted by \(\algoname\), \(\cgt\),  Network-GIANT, and Qu-Li \cite{ref:GQ-NL-17} for qNbB-Q compression scheme. Figure \ref{subfig:qnb_residual} plots the optimality gaps in terms of the functional values of the global objective function. }
    \label{fig:fourplots}
\end{figure*}

\subsection{Binary logistic classification}
\subsubsection*{Problem setup}
Given the local data \(\mathcal{D}_i \Let \aset[\Big]{\big(u^i_j,v^i_j\big) \in \Rbb^{1 \times \dimdv} \times \aset[]{-1,1}}_{j=1}^{m_i}\) available to the \(i\)-th agent, the following regularized optimization problem
\begin{align*}
    \label{eq:log_bin}
    \min_{\dvar \in \Rbb^{\dimdv}} \objfunc_i(\dvar) &= \frac{1}{m_i}\sum_{i=1}^{m_i} \log\Big( 1 + \exp\big(-v^i_j(\dvar^{\top}u^i_j) \big)\Big) + \frac{\regu}{2} \norm{\dvar}^2
\end{align*}
 is solved. We considered two graph topologies for the experiments: first, an undirected graph having \(\specnorm =   0.8727\), with \(\agents = 10\) agents, connected in a ring network with self-loops. As before, the Metropolis-Hastings algorithm was used to construct the consensus weight matrix, and assume that each agent can exchange data with their \(1\)-hop neighbors. Second, we considered an undirected expander graph with the number of nodes \(n = 14\), with each node having a degree \(6\). Such a graph has a spectral norm \(\specnorm = 0.7912\). We constructed the consensus weight matrix using the Metropolis-Hastings algorithm, as before.

\subsubsection*{Data generation and distribution}
We performed distributed classification on the CovType dataset \cite{ref:DD-CG-19}. We applied standard principal component analysis (PCA) to reduce the dataset, resulting in \(566602\) sample points and \(p = 10\) features. We randomly reshuffled the samples to minimize bias, divided the entire dataset into two parts --- training data \((\mathrm{U}_{\text{train}}, \mathrm{V}_{\text{train}})\) and testing data \((\mathrm{U}_{\text{test}}, \mathrm{V}_{\text{test}})\) --- and distributed homogeneously among the agents such that each agent received \(m_i = \left \lfloor \frac{N}{\agents}\right \rfloor\) number of samples. Throughout, we picked the number of nodes to be \(\agents = 10\).
\begin{table}[!htpb]
\centering
\begin{adjustbox}{max width=\linewidth}
\begin{tblr}{l c c c}
\hline[2pt]
	\SetRow{azure9}
 & Training samples  & Testing samples & Accuracy \\
\hline[1pt]
Ring network & 400000 & 166602 & 0.5984 \\
 \hline[0.5pt]
Expander network (\(\agents = 14\)) & 400008 & 166594 & 0.6014 \\
\hline[2pt]
\end{tblr}
\end{adjustbox}
\caption{Number of training and testing samples.}
\label{tab:prog-comp4}
\end{table}

\subsubsection*{Initialization} All variables were initialized identically, as it was done for the ridge regression problem in \S~\ref{subsec:rr}.
\subsubsection*{Hyper-parameters}
\emph{The hyper-parameters were selected to obtain the best performance for the algorithms.} We fixed the regularizer \(\regu = 0.1\) and the number of iterations to \(1000\). 
\begin{table}[htpb]
\centering
\begin{adjustbox}{max width=\linewidth}
\begin{tblr}{
  colspec = {l |cccc|cccc|cccc}, 
  row{1,2} = {azure9},
  hline{1,3,Z} = {2pt},
  hline{2} = {1pt},
  column{2-5,6-9,10-13} = {c}, 
}
Algorithm &
\SetCell[c=4]{c} qNbB-Q & & & &
\SetCell[c=4]{c} Tk-S & & & &
\SetCell[c=4]{c} qNS-C & & & &\\
 & Ring \(\constsz\) & Ring \(\stsz\) & Exp. \(\constsz\) & Exp. \(\stsz\) &
   Ring \(\constsz\) & Ring \(\stsz\) & Exp. \(\constsz\) & Exp. \(\stsz\) &
   Ring \(\constsz\) & Ring \(\stsz\) & Exp. \(\constsz\) & Exp. \(\stsz\) \\

\(\cgt\) \cite{ref:YL-ZL-KH-SP-22} &
0.35 & 0.10 & 0.20 & 0.10 &
0.65 & 0.05 & 0.21 & 0.10 &
0.35 & 0.15 & 0.30 & 0.10 \\

\(\lead\) \cite{ref:XL-YL-RW-JT-MY-20} &
0.60 & 0.09 & 0.60 & 0.10 &
0.30 & 0.10 & 0.60 & 0.09 &
-- & -- & -- & -- \\

\(\cold\) \cite{ref:JZ-KY-LX-23} &
0.40 & 0.09 & 0.40 & 0.10 &
0.40 & 0.06 & 0.40 & 0.10 &
0.30 & 0.05 & 0.30 & 0.05 \\

\(\comphess\) \cite{ref:HL-JZ-AMCS-QL-23} &
-- & -- & -- & -- &
0.5 & 0.085 & 0.5 & 0.09 &
-- & -- & -- & -- &\\

\(\algoname\) &
0.35 & 0.093 & 0.20 & 0.09 &
0.40 & 0.098 & 0.21 & 0.08 &
0.35 & 0.095 & 0.30 & 0.095 \\

\hline[2pt]
\end{tblr}
\end{adjustbox}
\caption{\(\stsz\) and \(\constsz\) for different compression schemes.}
\label{tab:prog-comp2}
\end{table}
Moreover, \((\condv, \congt)\) to be \((0.5,0.5)\) across all experiments, both for the ring network and the expander graph. 
Table \ref{tab:prog-comp2} lists the hyper-parameters chosen for the experiments, for the ring network and the expander network.

\subsubsection*{Results and discussions}
In the following experiments, the true optimal solution \(\dvar^{\ast}\) and the optimal value \(\objfunc(\dvar^{\ast})\) correspond to the centralized Newton-Raphson algorithm with backtracking line search. These values were taken as the baseline, and different errors were computed with respect to these baselines. \(\algoname\) was compared with the communication-efficient first-order algorithms --- \(\cgt\), \(\lead\), and \(\cold\) ---  as well as with the \emph{compressed-Hessian} second-order distributed algorithm \cite{ref:HL-JZ-AMCS-QL-23} under the Tk-S compression scheme, which we refer to as \(\comphess\) for convenience. We kept the consensus step-size for \(\comphess\) to be \(1\). We did not compare GIANT because the results would depend on the underlying graph. Consistent with our observations reported in Figure \ref{fig:fourplots}, we observed from Figures \ref{fig:covtype_qnb} and \ref{fig:covtype_qnb_ex} that \(\algoname\) achieves faster convergence at a linear rate and is the most communication-efficient algorithm among \(\cgt\), \(\lead\), \(\cold\), and \(\comphess\). In fact, we found that \(\algoname\):qNbB-Q and \(\algoname\):qNS-C communicated the fewest bits. A similar trend is observed for \(\cgt\) and \(\cold\). Moreover, \(\algoname\) outperformed \(\comphess\) by two orders of magnitude; see Figures \ref{subfig:fc_tk_cov} and \ref{subfig:fc_tk_cov_ex}. In fact, the gradient-based algorithms \(\cgt\), \(\lead\), and \(\cold\) performed better than \(\comphess\) in terms of communication-efficiency. This is because \(\comphess\) compresses the Hessian and therefore requires a larger number of bits for transmission than \(\algoname\), while the computational time \(\comphess\) is identical to \(\algoname\) due to Hessian inversion.

 We further observed that \(\lead\) diverged under the qNS-C compression scheme, which aligns with the empirical findings reported in \cite{ref:YL-ZL-KH-SP-22,ref:JZ-KY-LX-23}, and so we decided to omit the plots for LEAD:qNS-C from Figures \ref{subfig:fc_ns_cov} and \ref{subfig:fc_ns_cov_ex}. We have also removed \(\lead\):Tk-S, \(\cold\):Tk-S, and \(\cold\):qNS-C from Figures \ref{fig:covtype_qnb} and \ref{fig:covtype_qnb_ex}, primarily to remove clutter and because they are not communication-efficient algorithms.

\begin{figure}[!htbp]
    \centering
    \begin{subfigure}{0.48\columnwidth}
        \includegraphics[width=\linewidth,height=2.8cm]{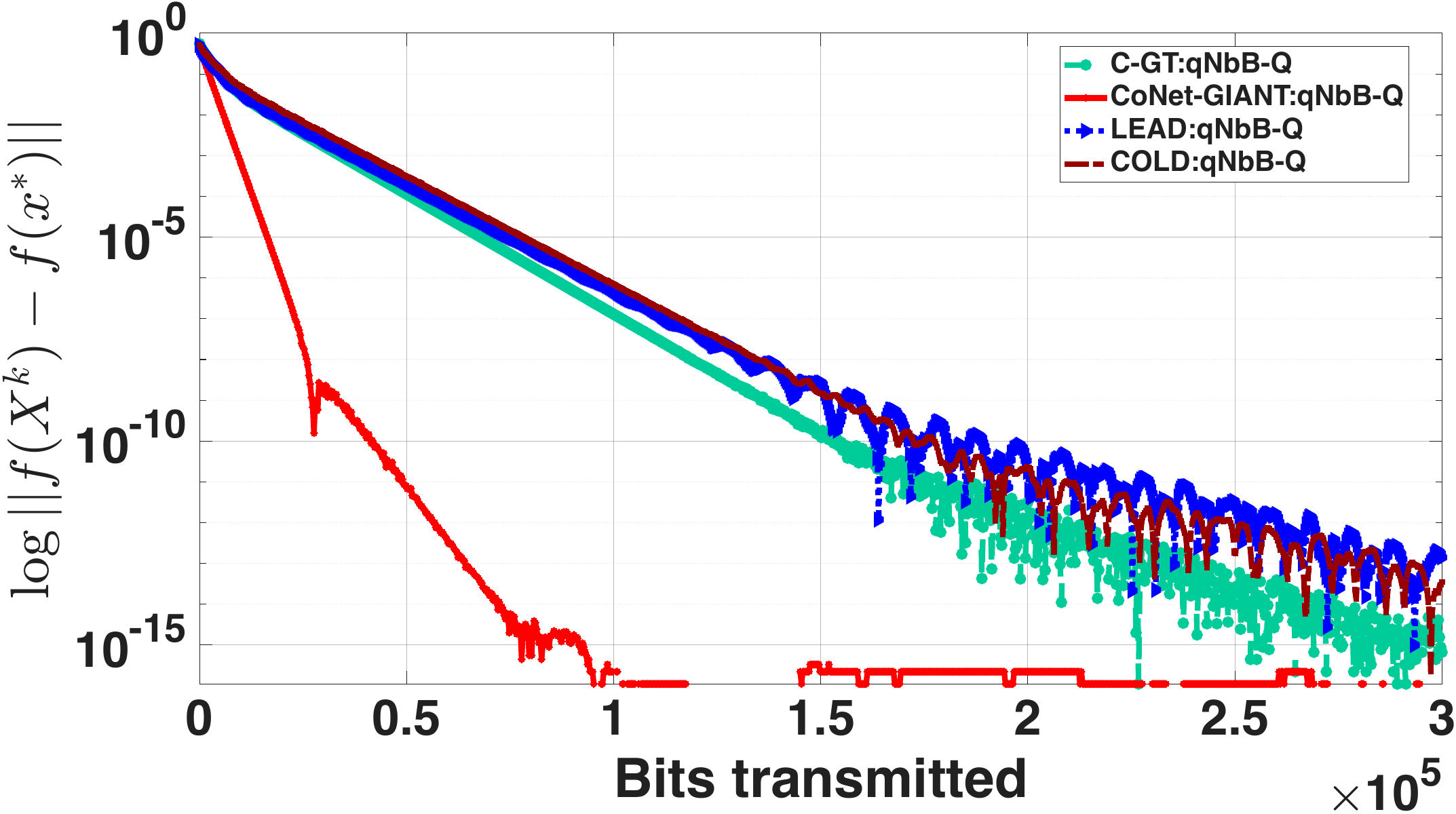}
        \caption{}
        \label{subfig:fc_qnbb_cov}
    \end{subfigure}\hfill
    \begin{subfigure}{0.48\columnwidth}
        \includegraphics[width=\linewidth,height=2.8cm]{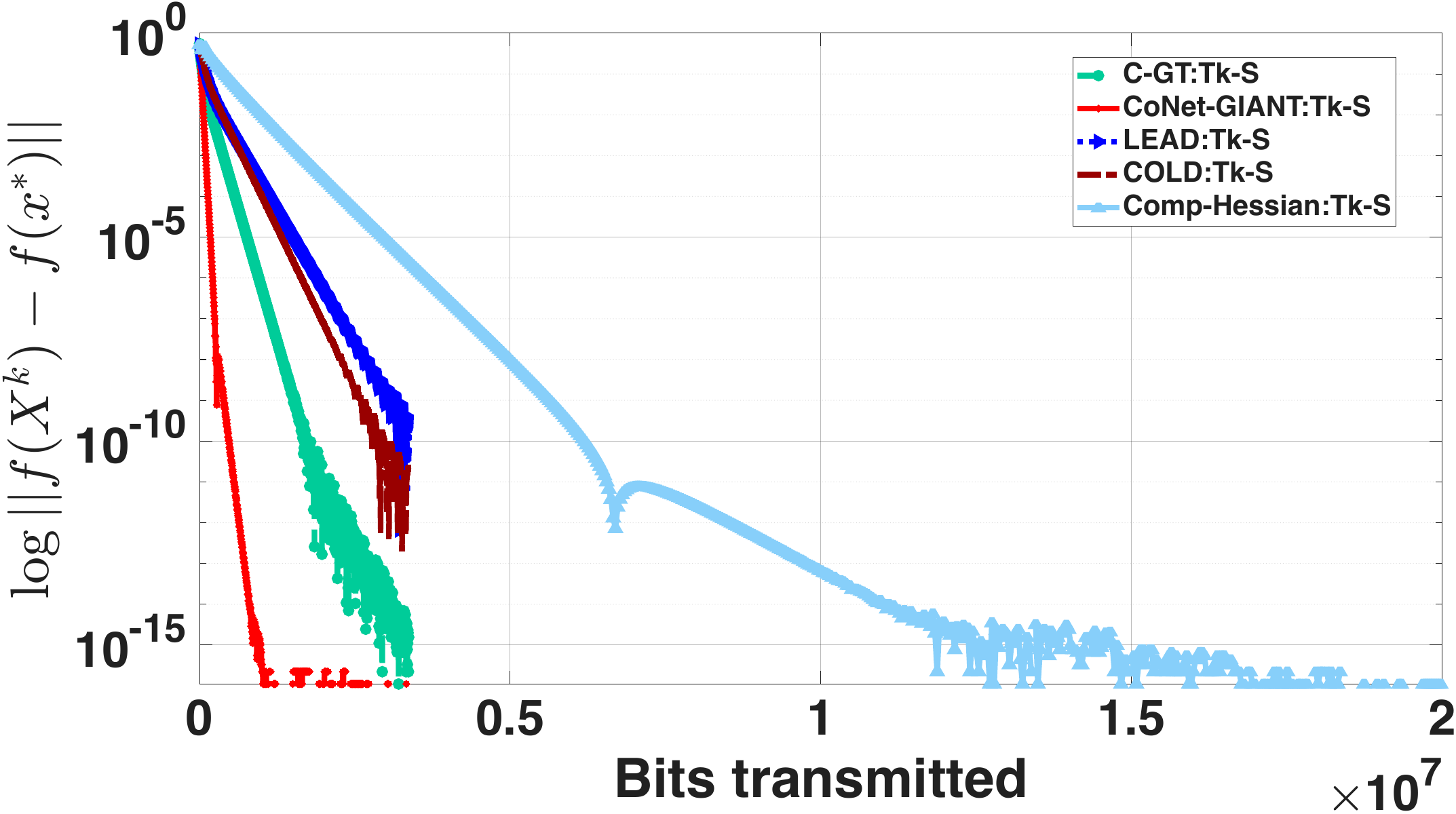}
        \caption{}
        \label{subfig:fc_tk_cov}
    \end{subfigure}

    \begin{subfigure}{0.48\columnwidth}
        \includegraphics[width=\linewidth,height=2.8cm]{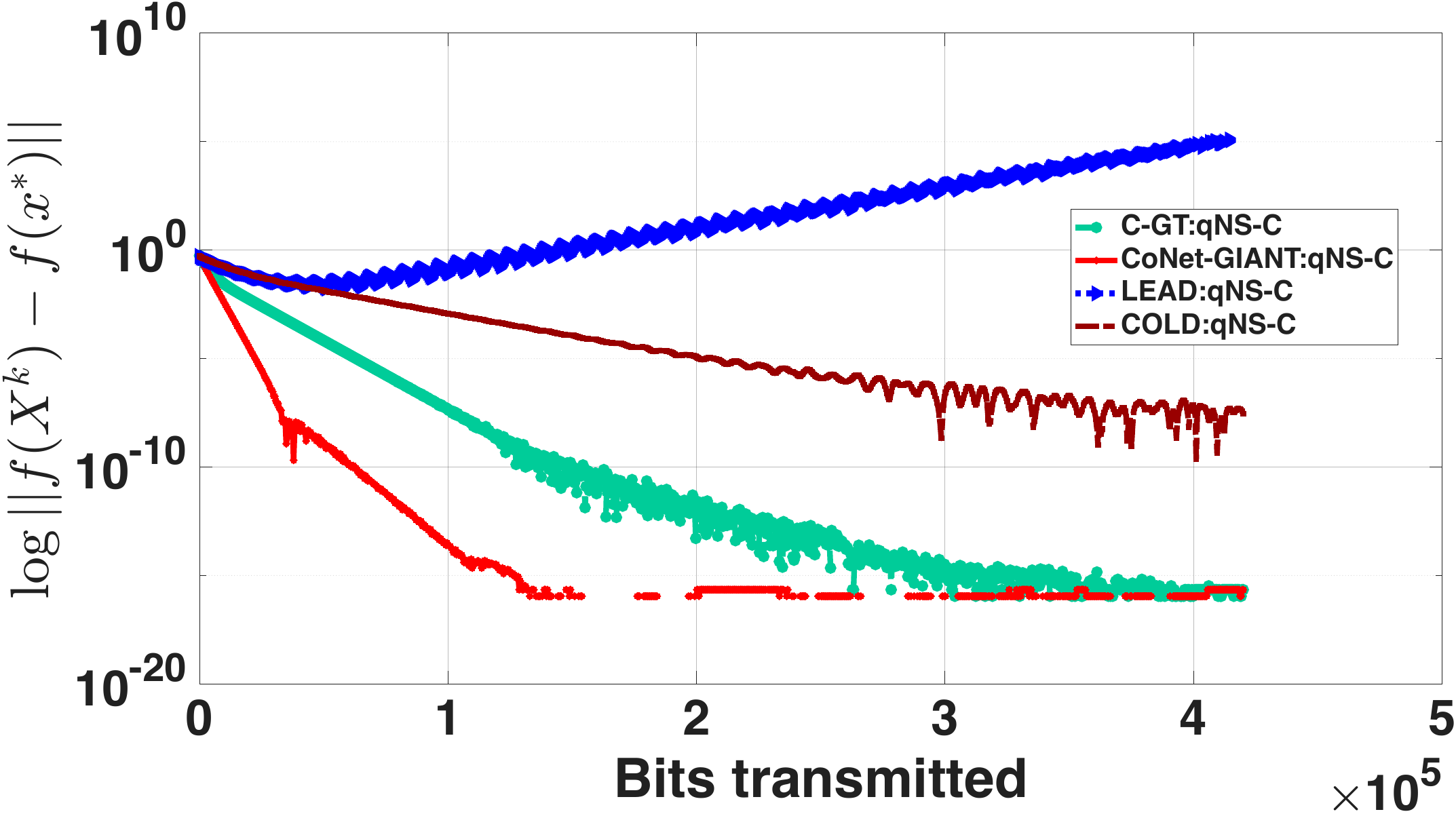}
        \caption{}
        \label{subfig:fc_ns_cov}
    \end{subfigure}\hfill
    \begin{subfigure}{0.48\columnwidth}
        \includegraphics[width=\linewidth,height=2.8cm]{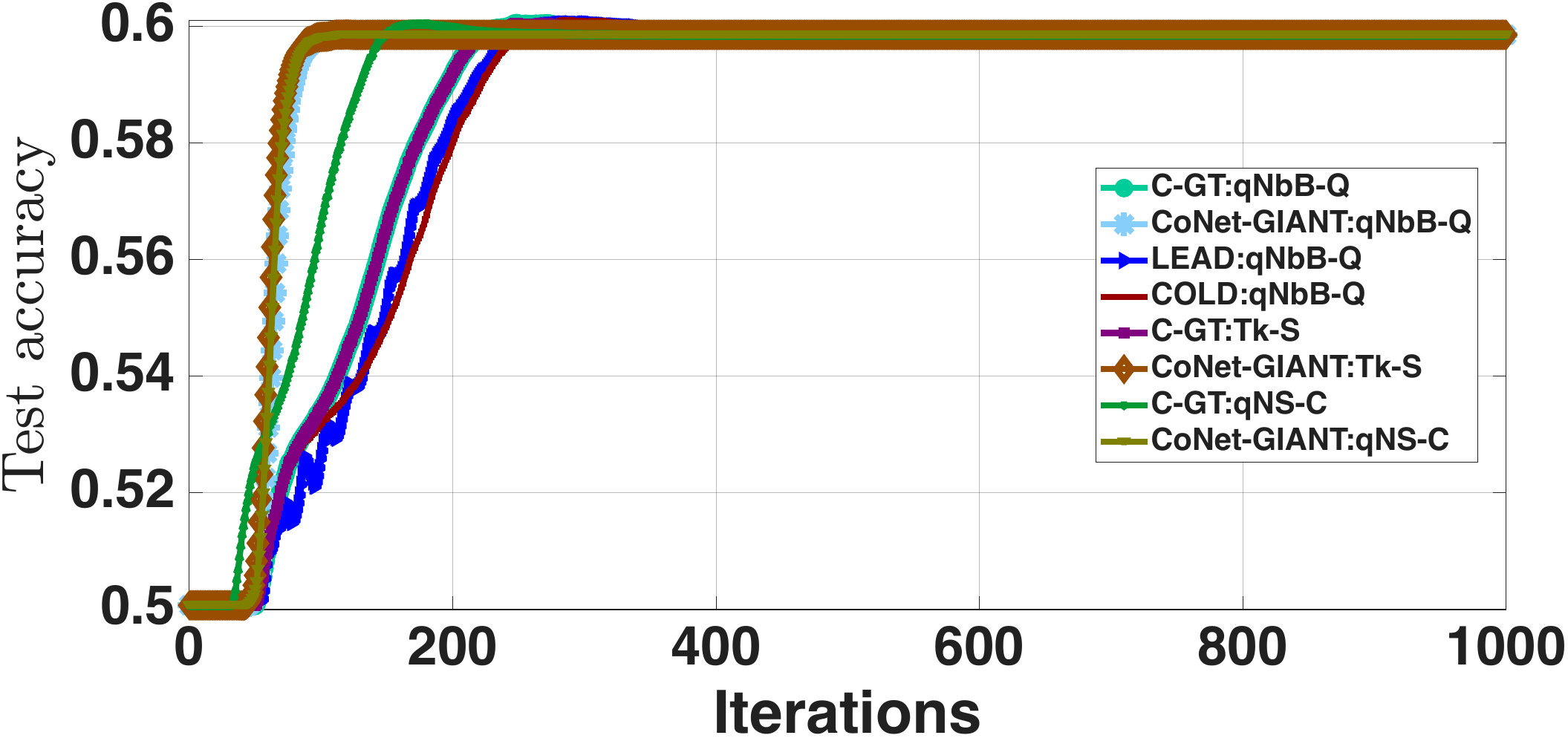}
        \caption{}
        \label{subfig:fc_acc_cov}
    \end{subfigure}

    \caption{Figure \ref{subfig:fc_qnbb_cov}–\ref{subfig:fc_ns_cov} compare \(\algoname\) with \(\cgt\), \(\lead\), \(\cold\), and \(\comphess\) under different schemes for the ring network. Figure \ref{subfig:fc_acc_cov} summarizes the accuracy attained by these algorithms.}
    \label{fig:covtype_qnb}
\end{figure}
\begin{figure}[!htbp]
    \centering
    \begin{subfigure}{0.48\columnwidth}
        \includegraphics[width=\linewidth,height=2.8cm]{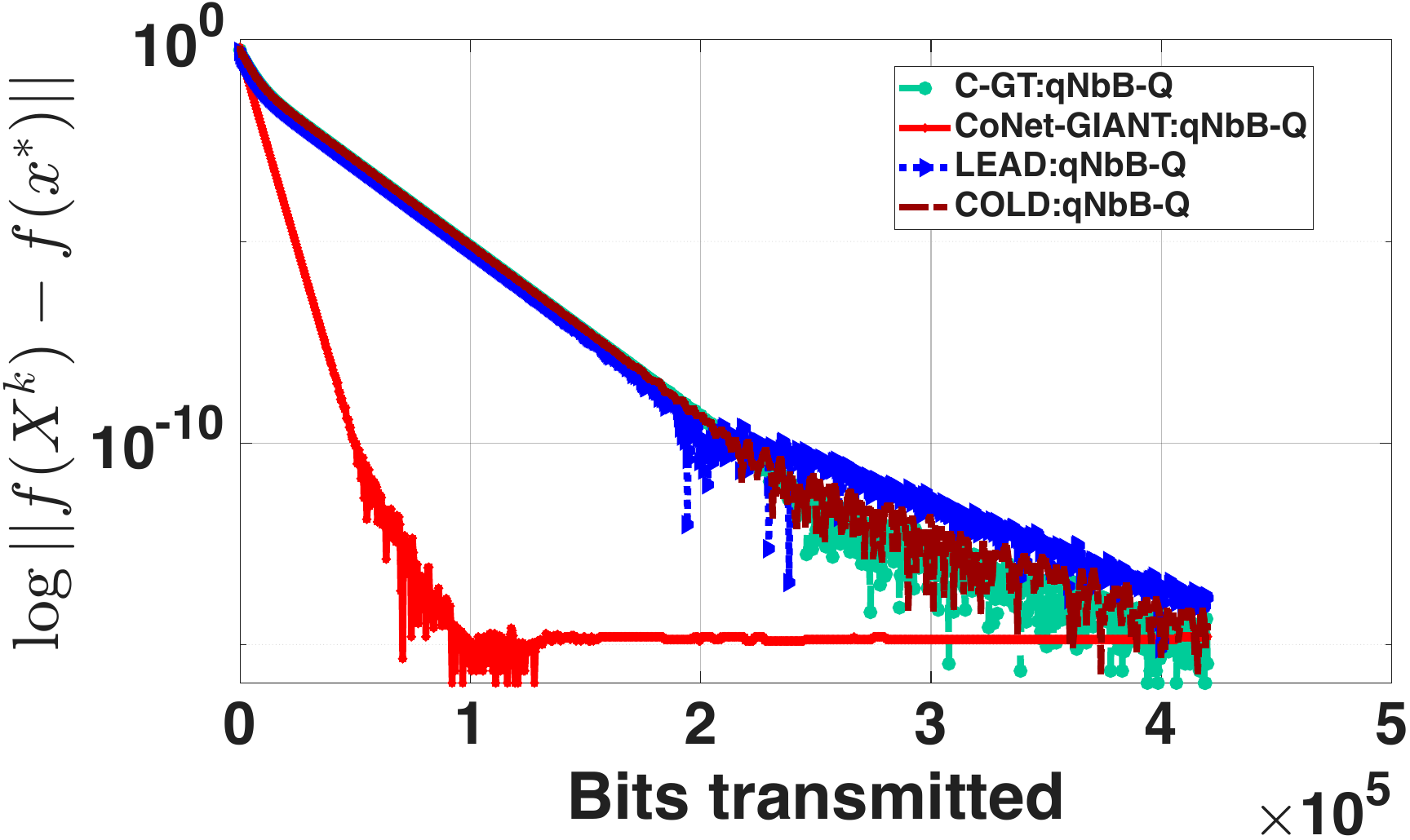}
        \caption{}
        \label{subfig:fc_qnbb_cov_ex}
    \end{subfigure}\hfill
    \begin{subfigure}{0.48\columnwidth}
        \includegraphics[width=\linewidth,height=2.8cm]{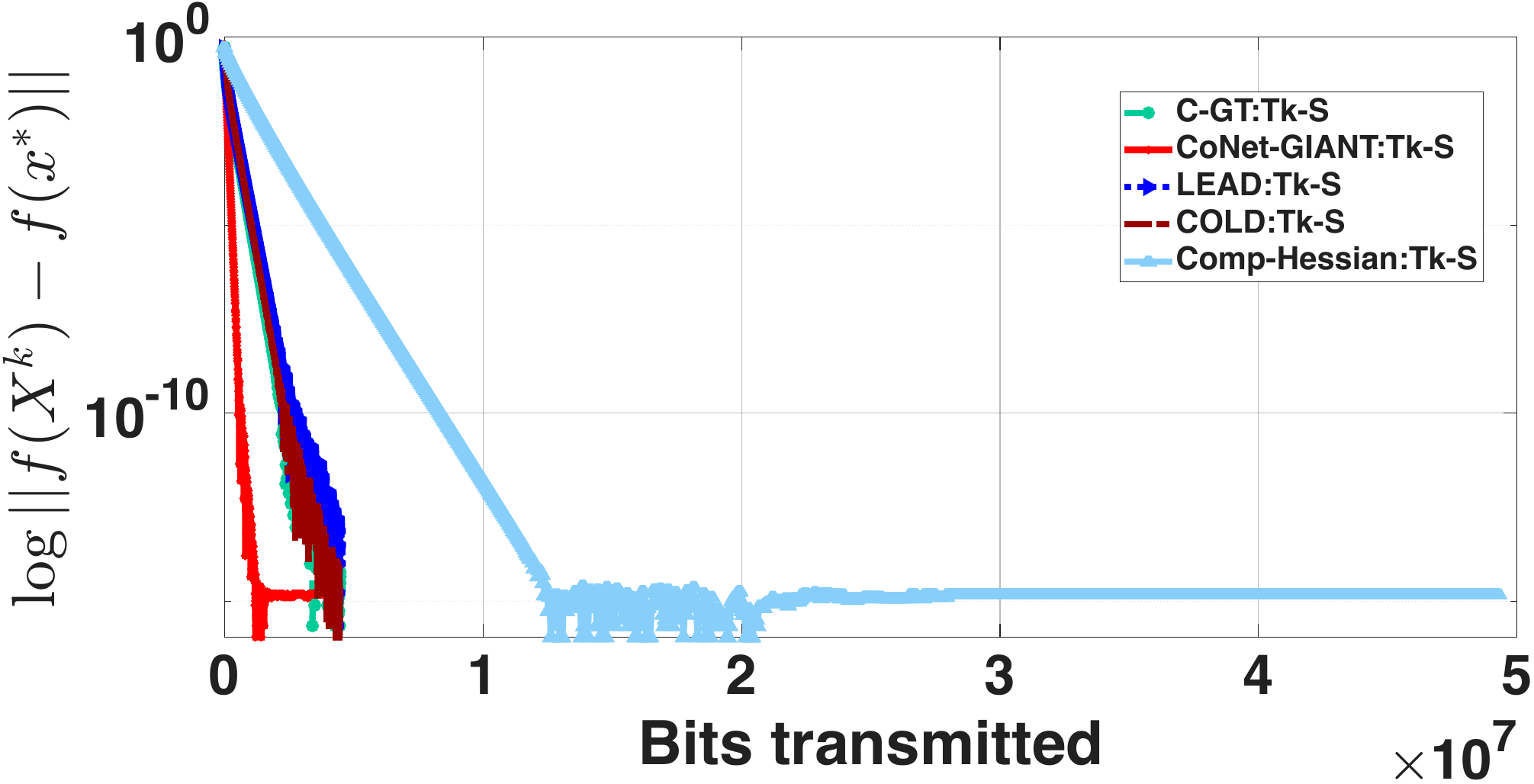}
        \caption{}
        \label{subfig:fc_tk_cov_ex}
    \end{subfigure}

    \begin{subfigure}{0.48\columnwidth}
        \includegraphics[width=\linewidth,height=2.8cm]{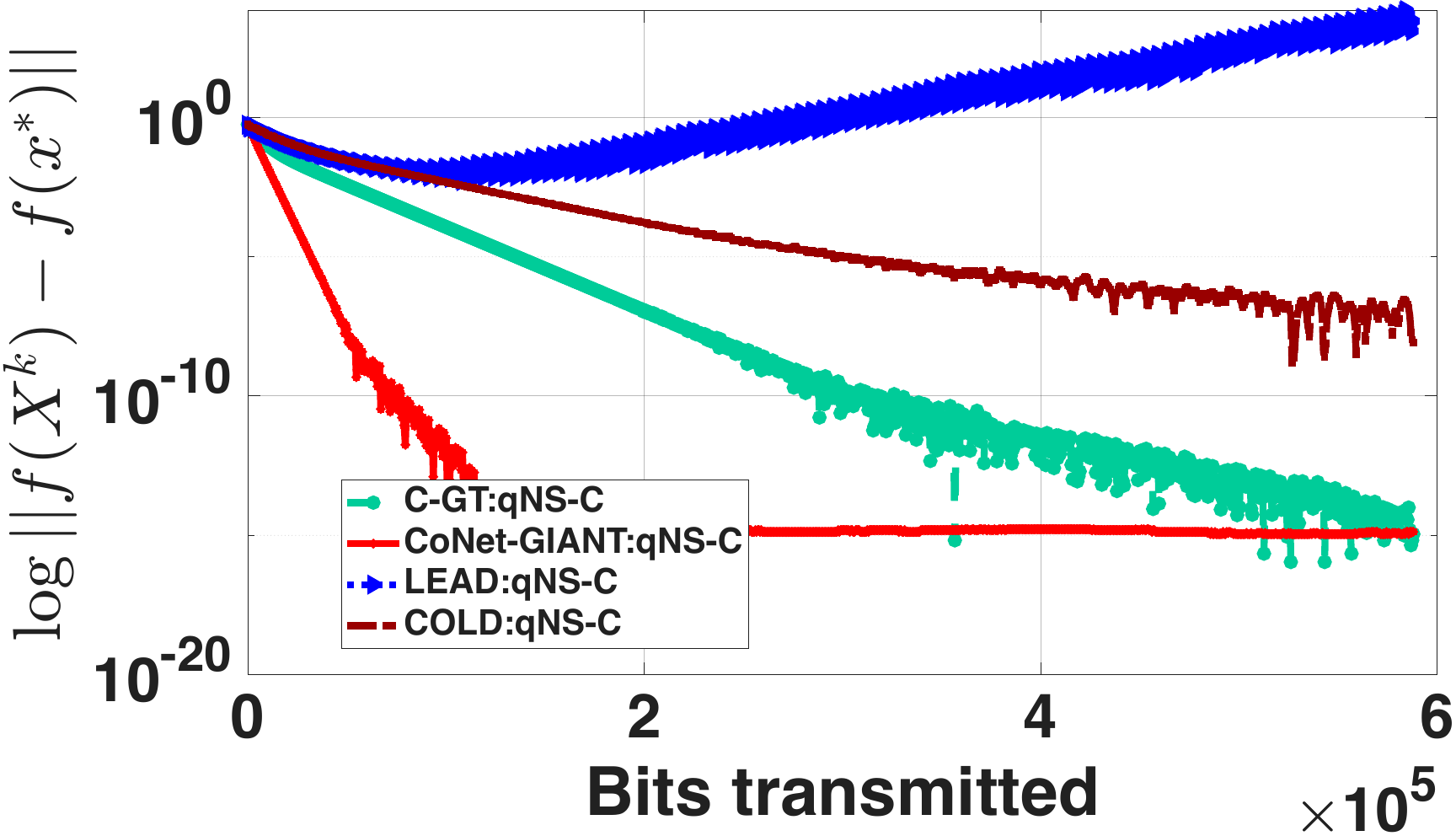}
        \caption{}
        \label{subfig:fc_ns_cov_ex}
    \end{subfigure}\hfill
    \begin{subfigure}{0.48\columnwidth}
        \includegraphics[width=\linewidth,height=2.8cm]{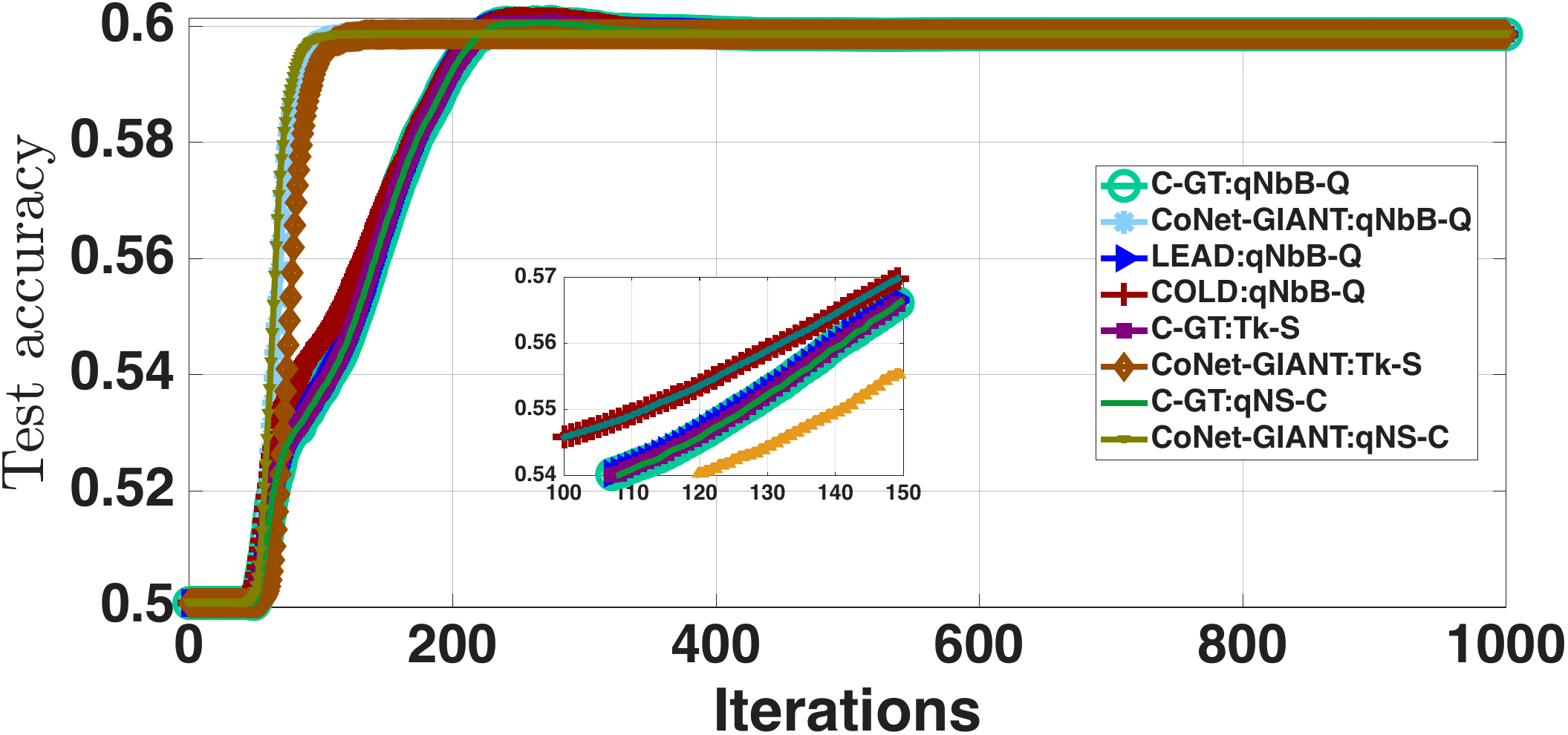}
        \caption{}
        \label{subfig:fc_acc_cov_ex}
    \end{subfigure}
    \caption{Comparison of \(\algoname\) with \(\cgt\), \(\lead\), \(\cold\), and \(\comphess\), for the expander network.}
    \label{fig:covtype_qnb_ex}
\end{figure}

\section{Conclusion}
\label{sec:conclusion}
This article presents a communication-efficient, fully decentralized Newton-type optimization algorithm with compression. We established a linear convergence rate comparable to its uncompressed counterparts, and superior to gradient-descent-based distributed optimization algorithms. Extensive experiments are performed on the synthetic and CovType datasets to demonstrate the effectiveness of the algorithm.
\bibliographystyle{IEEEtran}
\bibliography{refs}

\appendices
\onecolumn
\section{Proofs}
\label{sec:proofs}
The appendix is organized as follows: We start with several technical lemmas and results (see Lemmas \ref{lem:techinical lemmas} and \ref{lem:portmanteuau 1}, and  below \S~\ref{subappen:technical}) that are required for the proof of Lemma \ref{lem: key lemma} and Theorem \ref{thm:key thm}, which are included in \S~\ref{subappen:main proof}.

\subsection{Technical results}\label{subappen:technical}
\begin{lemma}
    \label{lem:techinical lemmas}
    Consider Algorithm \ref{alg:sec_ord_comp} along with its associated data and notations established in \S\ref{sec:algo}. Suppose that Assumptions \ref{assum:on graph} and \ref{assum:Standard assumptions} hold. Let \(\Wght\) be the consensus weight matrix.
    \begin{enumerate}[leftmargin=*,label = (\ref{lem:techinical lemmas}-\alph*)]
        \item \label{it:error_comp_1} For every \(t \ge 0\), we have the following equalities:
        \begin{align*}
            \wedvm(t) = \Wght \edvm(t) \text{ and } \wegtm(t) = \Wght \egtm(t).
        \end{align*}

        \item \label{it:error_comp_2} Let \((\sigalg_t)_{t \in \Nz}\) denotes a sequence of \(\sigma\)-algebras generated by the history 
        \[\aset[]{\dmat(0), \gtmat(0),\dmat(1), \gtmat(1), \ldots, \dmat(t), \gtmat(t)},\]
        i.e., \(\sigalg_t = \sigma \bigl( \dmat(0), \gtmat(0),\dmat(1), \gtmat(1), \ldots, \dmat(t), \gtmat(t)\bigr)\).
        Consider the update rule in Line \(3\) of Algorithm \ref{alg:sec_ord_comp}. Let \(\Cgen>0\) be the constant in Definition \ref{def:General Compression Operator}. Then for every iteration \(t \ge 0\)
        \begin{equation*}
            \EE \cexpecof[\bigg]{\norm{\dmat(t) - \edvm(t)}^2 \given \sigalg_t} \le \Cgen \norm{\dmat(t) - \auxdv(t)}^2 \text{ and }\EE \cexpecof[\bigg]{\norm{\gtmat(t) - \egtm(t)}^2 \given  \sigalg_t} \le \Cgen \norm{\gtmat(t) - \auxgt(t)}^2.
        \end{equation*} \hfill \qedsymbol
    \end{enumerate}
\end{lemma}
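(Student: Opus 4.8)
The plan is to first unwind the compression procedure (the \texttt{Compress} routine) of Algorithm~\ref{alg:sec_ord_comp} into explicit recursions, and then dispatch the two assertions in turn. Writing $Q_{\dvar}(t) \Let \compscheme\bigl(\dmat(t) - \auxdv(t)\bigr)$ for the compressed innovation, the routine invoked with $Z = \dmat(t)$, $H = \auxdv(t)$, $H^{\wght} = \wauxdv(t)$ and scaling $\condv$ produces
\begin{align*}
    &\edvm(t) = Q_{\dvar}(t) + \auxdv(t), \qquad \wedvm(t) = \wauxdv(t) + \Wght Q_{\dvar}(t), \\
    &\auxdv(t+1) = (1-\condv)\auxdv(t) + \condv \edvm(t), \qquad \wauxdv(t+1) = (1-\condv)\wauxdv(t) + \condv \wedvm(t),
\end{align*}
and an identical set of recursions governs the gradient-tracker block with $\gtmat, \auxgt, \wauxgt$ and scaling $\congt$.

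For part~\ref{it:error_comp_1} I would prove, by induction on $t$, the stronger invariant $\wauxdv(t) = \Wght \auxdv(t)$ and $\wauxgt(t) = \Wght \auxgt(t)$. The base case $t=0$ is precisely the initialization $\wauxdv(0) = \Wght\auxdv(0)$, $\wauxgt(0) = \Wght\auxgt(0)$ imposed in Algorithm~\ref{alg:sec_ord_comp}. Assuming the invariant at step $t$, the communication line gives $\wedvm(t) = \wauxdv(t) + \Wght Q_{\dvar}(t) = \Wght\bigl(\auxdv(t) + Q_{\dvar}(t)\bigr) = \Wght\edvm(t)$, which is exactly the claimed equality; substituting this into the $\wauxdv$-update and factoring out $\Wght$ then propagates the invariant, since $\wauxdv(t+1) = \Wght\bigl[(1-\condv)\auxdv(t) + \condv\edvm(t)\bigr] = \Wght\auxdv(t+1)$. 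The same computation with $\congt$ yields $\wegtm(t) = \Wght\egtm(t)$, closing the induction.

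For part~\ref{it:error_comp_2} the key observation is the algebraic identity
\[
    \dmat(t) - \edvm(t) = \bigl(\dmat(t) - \auxdv(t)\bigr) - \compscheme\bigl(\dmat(t) - \auxdv(t)\bigr),
\]
obtained by subtracting $\edvm(t) = Q_{\dvar}(t) + \auxdv(t)$ from $\dmat(t)$. Conditioning on $\sigalg_t$, the argument $\dmat(t) - \auxdv(t)$ is deterministic, and the only randomness in $Q_{\dvar}(t)$ is the fresh compression noise injected at step $t$, which is independent of $\sigalg_t$. Hence the general-compression bound \eqref{eq:general compressor operator} of Definition~\ref{def:General Compression Operator} applies with $\dvar = \dmat(t) - \auxdv(t)$, giving
\[
    \EE \cexpecof[\big]{\norm{\dmat(t) - \edvm(t)}^2 \given \sigalg_t} = \EE \cexpecof[\big]{\norm{\compscheme\bigl(\dmat(t)-\auxdv(t)\bigr) - \bigl(\dmat(t)-\auxdv(t)\bigr)}^2 \given \sigalg_t} \le \Cgen \norm{\dmat(t) - \auxdv(t)}^2,
\]
and the identical manipulation on the gradient-tracker block delivers the second inequality.

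The main obstacle I anticipate is not the algebra but the measurability bookkeeping underpinning the conditional step: one must justify that $\auxdv(t)$ (and likewise $\wauxdv(t)$, $\auxgt(t)$, $\wauxgt(t)$) are $\sigalg_t$-measurable, so that $\dmat(t) - \auxdv(t)$ is constant under $\EE\cexpecof[]{\cdot \given \sigalg_t}$, while the compression randomness drawn at step $t$ is independent of $\sigalg_t$. This holds because the auxiliary states evolve through the deterministic recursions above from the initialization and the past compressed innovations. Some care is warranted, since the update rule feeds $\edvm(t-1)$ through $\identity{\agents} - \Wght$, so I would either argue measurability directly from the deterministic state-transition structure of the algorithm or, equivalently, interpret $(\sigalg_t)_{t \in \Nz}$ as the natural filtration enlarged by the per-step compression noise, under which the independence invoked above is immediate.
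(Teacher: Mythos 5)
Your proof is correct and follows essentially the same route as the paper: part (a) via induction on the invariant \(\wauxdv(t) = \Wght\auxdv(t)\) (the paper phrases this as ``iterating'' the base-case computation), and part (b) via the identity \(\dmat(t) - \edvm(t) = \bigl(\dmat(t)-\auxdv(t)\bigr) - \compscheme\bigl(\dmat(t)-\auxdv(t)\bigr)\) followed by Definition \ref{def:General Compression Operator}. Your closing remark on measurability is a genuine refinement rather than a deviation: the paper silently treats \(\auxdv(t)\) as \(\sigalg_t\)-measurable, which is not immediate since \(\edvm(t-1)\) enters the \(\dmat\)-update only through the singular matrix \(\identity{\agents}-\Wght\), and your suggestion to enlarge \((\sigalg_t)_{t\in\Nz}\) to include the past compression noise is the correct way to make the conditioning rigorous.
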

\begin{proof}
    Let us prove the first part. From Algorithm \ref{alg:sec_ord_comp}, we observe that \(\wauxdv(0) = \Wght \auxdv(0)\) and \(\wauxgt(0) = \Wght \auxgt(0)\). Using the expression of Line \(7\), Line \(8\) of Algorithm \ref{alg:sec_ord_comp} simplifies to
\begin{align}\label{eq:iter step prel}
    \wedvm(0) = \wauxdv(0) + \Wght \encode{\dvar}(0)
    &= \Wght \auxgt(0) + \Wght \encode{\dvar}(0) = \Wght \bigl( \underbrace{\auxgt(0) + \encode{\dvar}(0)}_{= \edvm(0)} \bigr) = \Wght \edvm(0).
\end{align}
Immediately, from Line \(9\) we see that
\begin{align}
    \label{eq:inter step}
    \wauxdv(1) &= (1 - \condv)\wauxdv(0) + \condv \wedvm(0) = \Wght \Bigl((1 - \condv)\auxdv(0) + \condv \edvm(0) \Bigr) = \Wght \auxdv(1)
\end{align}
Iterating \eqref{eq:iter step prel} and \eqref{eq:inter step} continuously for each iterations \(t >0\), we obtain
\(
    \wauxdv(t) = \Wght 
    \auxdv(t) \text{ and } \wedvm(t) = \Wght \edvm(t).
\)
Using similar arguments, we obtain
\(
    \wauxgt(t) = \Wght \auxgt(t) \text{ and } \wegtm(t) = \Wght \egtm(t).
\)
This concludes the proof of the first part.

For the second part, let us consider the \(\sigma\)-algebra \(\sigalg_t\) generated by the history until time \(t\),
\(\aset[]{\dmat(0), \gtmat(0),\dmat(1), \gtmat(1), \ldots, \dmat(t), \gtmat(t)}\).
Observe that 
\begin{align*}
    \EE \cexpecof[\bigg]{\norm{ \dmat(t) - \edvm(t)}^2 \given \sigalg_t} &= \EE \cexpecof[\bigg]{\norm{\dmat(t) - \gcompop\bigl( \dmat(t) - \auxdv(t)\bigr) - \auxdv(t)}^2\given \sigalg_t} \\
    & = \EE \cexpecof[\bigg]{\norm{\dmat(t)  - \auxdv(t) - \gcompop\bigl( \dmat(t) - \auxdv(t)\bigr)}^2\given \sigalg_t} \\ 
    & \stackrel{(\star)}{\le} \Cgen \norm{\dmat(t)  - \auxdv(t)}^2,
\end{align*}
where \((\star)\) follows from Definition \ref{def:General Compression Operator}. A similar conclusion can be drawn for the gradient tracker \(\gtmat\), and we recover 
\(\EE \cexpecof[\bigg]{\norm{ \gtmat(t) - \egtm(t)}^2 \given \sigalg_t} \le  \Cgen \norm{\gtmat(t) - \auxgt(t)}^2.\)
The proof is now complete.
\end{proof}

We next present several properties of the variables of interest: the gradient-tracking variable \(\gtmat\) and the decision variable \(\dmat\).
\begin{lemma}
    \label{lem:portmanteuau 1}
    Suppose that the hypothesis of Lemma \ref{lem:techinical lemmas} holds. Then the following assertions hold:
    \begin{enumerate}[leftmargin=*,label = (\ref{lem:portmanteuau 1}-\alph*)]
        \item \label{it:port_1}If the initial condition \(\gtmat(0)\) is chosen such that \(\gtmat(0) = \grd \Objfunc(\dmat(0))\), then for every \(t\ge 0\) we have \(\avggtmat(t) = \frac{1}{\agents} \ones^{\top} \grd \Objfunc(\dmat(t))\).

        \item \label{it:port_3} Let \(L\) be the Lipschitz constant in Assumption \ref{assum:Standard assumptions}. For every \(t \ge 0\), the inequality
        \[
            \norm{\avggtmat(t)^{\top} - \grd \objfunc\bigl( \avgdmat(t)^{\top}\bigr) } \le \frac{\lips}{\sqrt{\agents}} \norm{\dmat(t) - \ones \avgdmat(t)}
        \]
        holds.
        
        \item \label{it:port_2} For every \(t \ge 0\), we get
        \(\avgdmat(t+1) = \avgdmat(t) - \stsz \frac{1}{\agents}\ones^{\top} \cdmat(t).\)

        \item \label{it:port_4} Recall that \(\strconv>0\) is the constant associated with strong convexity of the sequence of functions \((\objfunc_i)_{i=1}^{\agents}\) in Assumption \ref{assum:Standard assumptions}. For each \(t \ge 0\), the inequality \(\norm{\cdmat(t)  - \ones \avgcdmat(t)}^2 \le \frac{1}{\strconv^2} \norm{\gtmat(t)}^2\) holds.

        \item \label{it:port_5} The following inequality is valid for each \(t\):
        \begin{align*}
            \norm{\grd \av{\Objfunc} \big(\ones \avgdmat(t) \big) - \grd \objfunc\big(\dvar^{\ast}\big)} \le \lips \norm{\avgdmat(t)^{\top} - \dvar^{\ast}}.
        \end{align*}

        \item \label{it:port_6} Define the quantity \(\quanto \Let \norm{\identity{\agents} - \Wght}.\)
        For each iteration \(t\), 
        \begin{align*}
            &\norm{\grd \Objfunc\big(\dmat(t+1)\big) - \Objfunc\big(\dmat(t)\big)}^2 \le 3 \lips^2 \constsz^2  \quanto^2 \norm{\dmat(t) - \edvm(t)}^2 + 3 \lips^2 \constsz^2  \quanto^2 \norm{\dmat(t) - \ones \avgdmat(t)}^2 + \frac{3\lips^2 \stsz^2}{\strconv^2} \norm{\gtmat(t)}^2.
        \end{align*}
    \end{enumerate}
\end{lemma}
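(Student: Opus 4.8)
The plan is to dispatch the six assertions in their natural dependency order, leaning on two recurring facts: the double stochasticity of $\Wght$, which gives $\ones^{\top}(\identity{\agents} - \Wght) = 0$ and $(\identity{\agents} - \Wght)\ones = 0$, together with the identities $\wedvm(t) = \Wght\edvm(t)$ and $\wegtm(t) = \Wght\egtm(t)$ supplied by Lemma \ref{lem:techinical lemmas}-\ref{it:error_comp_1}. I would first establish the averaging invariant \ref{it:port_1} by induction on $t$. Left-multiplying the gradient-tracker recursion in \eqref{eq:update rule} by $\tfrac{1}{\agents}\ones^{\top}$ and substituting $\wegtm(t) = \Wght\egtm(t)$ collapses the consensus-correction term $-\constsz\,\tfrac{1}{\agents}\ones^{\top}(\identity{\agents} - \Wght)\egtm(t)$ to zero; what survives telescopes against $\G(t+1) = \grd\Objfunc(\dmat(t+1)) - \grd\Objfunc(\dmat(t))$, and the base case $\gtmat(0) = \grd\Objfunc(\dmat(0))$ closes the induction, yielding $\avggtmat(t) = \tfrac{1}{\agents}\ones^{\top}\grd\Objfunc(\dmat(t))$. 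The identical averaging argument applied to the decision-variable recursion, now using $\wedvm(t) = \Wght\edvm(t)$, immediately gives \ref{it:port_2}: the consensus term again vanishes and only $-\stsz\,\tfrac{1}{\agents}\ones^{\top}\cdmat(t)$ remains.

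With \ref{it:port_1} in hand, assertion \ref{it:port_3} follows by rewriting $\avggtmat(t)^{\top} - \grd\objfunc(\avgdmat(t)^{\top})$ as $\tfrac{1}{\agents}\sum_{i}\bigl(\grd\objfunc_i(\dvar_i(t)) - \grd\objfunc_i(\avgdmat(t)^{\top})\bigr)$, invoking $\objfunc = \tfrac{1}{\agents}\sum_i\objfunc_i$, bounding each summand by $\lips\norm{\dvar_i(t) - \avgdmat(t)^{\top}}$ via Assumption \ref{assum:Standard assumptions}, and then applying Cauchy–Schwarz to turn the average of norms into the $\tfrac{1}{\sqrt{\agents}}$-scaled Frobenius quantity $\norm{\dmat(t) - \ones\avgdmat(t)}$. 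Assertion \ref{it:port_5} is then a one-line consequence of the $\lips$-smoothness of the global objective $\objfunc$ inherited from \eqref{eq:strongly_convex_Lips}, after identifying $\grd\av{\Objfunc}(\ones\avgdmat(t))$ with $\grd\objfunc(\avgdmat(t)^{\top})$ (all rows of $\grd\Objfunc$ evaluated at the common point $\avgdmat(t)^{\top}$).

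For \ref{it:port_4} I would use that the centering operator $\identity{\agents} - \tfrac{1}{\agents}\ones\ones^{\top}$ is an orthogonal projection, hence nonexpansive, so $\norm{\cdmat(t) - \ones\avgcdmat(t)}^2 \le \norm{\cdmat(t)}^2 = \sum_i\norm{\hess\objfunc_i(\dvar_i(t))\inverse\gtvar_i(t)}^2$; strong convexity $\hess\objfunc_i(\dvar_i(t)) \succeq \strconv\identity{\dimdv}$ bounds each inverse Hessian by $\tfrac{1}{\strconv}$ in operator norm, producing the factor $\tfrac{1}{\strconv^2}$ and hence the claim. Finally \ref{it:port_6}: row-wise $\lips$-Lipschitzness of the aggregate gradient map gives $\norm{\grd\Objfunc(\dmat(t+1)) - \grd\Objfunc(\dmat(t))}^2 \le \lips^2\norm{\dmat(t+1) - \dmat(t)}^2$. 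I then decompose the increment from the update rule as $\dmat(t+1) - \dmat(t) = -\constsz(\identity{\agents} - \Wght)(\dmat(t) - \ones\avgdmat(t)) + \constsz(\identity{\agents} - \Wght)(\dmat(t) - \edvm(t)) - \stsz\cdmat(t)$, where the recentering exploits $(\identity{\agents} - \Wght)\ones = 0$; applying $\norm{a+b+c}^2 \le 3(\norm{a}^2 + \norm{b}^2 + \norm{c}^2)$, bounding the first two blocks through $\quanto = \norm{\identity{\agents} - \Wght}$ and the third block by the estimate just proved in \ref{it:port_4}, yields exactly the three-term bound.

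The only non-mechanical step, and thus the main obstacle, is the recentering split in \ref{it:port_6}: one must add and subtract both $\ones\avgdmat(t)$ and the exact deviation $\dmat(t) - \edvm(t)$ inside $(\identity{\agents} - \Wght)\edvm(t)$ so that the leading block can be passed through $(\identity{\agents} - \Wght)\ones = 0$ and thereby be controlled by the \emph{consensus error} rather than by $\norm{\dmat(t)}$ itself, which would not decay. Once this algebraic regrouping is set up correctly, the remaining assertions are routine given the double-stochasticity identities and the regularity bounds of Assumption \ref{assum:Standard assumptions}.
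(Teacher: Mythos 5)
Your proposal is correct and follows essentially the same route as the paper: averaging the recursions \eqref{eq:update rule} via $\frac{1}{\agents}\ones^{\top}$ with $\ones^{\top}(\identity{\agents}-\Wght)=0$ for \ref{it:port_1} and \ref{it:port_2}, the Lipschitz-plus-Cauchy--Schwarz bound for \ref{it:port_3} and \ref{it:port_5}, the bound $\norm{\cdmat(t)-\ones\avgcdmat(t)}^2\le\norm{\cdmat(t)}^2\le\frac{1}{\strconv^2}\norm{\gtmat(t)}^2$ for \ref{it:port_4} (you cite projection nonexpansiveness where the paper expands the square directly --- the same fact), and the identical recentering decomposition of $\dmat(t+1)-\dmat(t)$ with the three-term inequality for \ref{it:port_6}. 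The only cosmetic imprecision is that in \ref{it:port_6} the bound you need is $\norm{\cdmat(t)}^2\le\frac{1}{\strconv^2}\norm{\gtmat(t)}^2$, which is the intermediate inequality inside the proof of \ref{it:port_4} (the paper's \eqref{eq:random}) rather than its statement, exactly as the paper itself invokes it.
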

\begin{proof}
    For the first part, we multiply both sides of the update equation \eqref{eq:update rule} by \(\frac{1}{\agents}\ones^{\top}\), yielding
    \begin{align*}
        \avggtmat(t+1) &= \frac{1}{\agents}\ones^{\top} \Bigl(\gtmat(t) - \constsz \bigl(\egtm(t) - \wegtm(t)\bigr) + \grd \Objfunc\bigl(\dmat(t+1)\bigr) - \grd \Objfunc\bigl(\dmat(t)\bigr) \Bigr) \\
        & = \frac{1}{\agents} \ones^{\top} \Bigl(\gtmat(t) + \grd \Objfunc\bigl(\dmat(t+1)\bigr) - \grd \Objfunc\bigl(\dmat(t)\bigr) \Bigr) - \frac{\constsz }{\agents} \ones^{\top} \bigl(\identity{\agents} - \Wght\bigr)\egtm(t).
    \end{align*}
    Note that \(\ones \in \mathrm{Ker}(\identity{\agents} - \Wght)\), implying that \(\ones^{\top} (\identity{\agents} - \Wght) = \ones^{\top} - \ones^{\top}\Wght = \ones^{\top} - \ones^{\top} = 0\). Therefore, \(\frac{\constsz }{\agents} \ones^{\top} \bigl(\identity{\agents} - \Wght\bigr)\egtm(t)=0\). Using the update equation \eqref{eq:update rule} iteratively, we simplify
    \begin{align*}
        & \frac{1}{\agents} \ones^{\top} \Bigl(\gtmat(t) + \grd \Objfunc\bigl(\dmat(t+1)\bigr) - \grd \Objfunc\bigl(\dmat(t)\bigr) \Bigr) \\
        & = \frac{1}{\agents} \ones^{\top}  \Bigl(\gtmat(0) - \grd \Objfunc\bigl( \dmat(0)\bigr) \Bigr) + \frac{1}{\agents} \ones^{\top} \grd \Objfunc\bigl(\dmat(t+1)\bigr) \\
        & = \frac{1}{\agents} \ones^{\top} \grd \Objfunc\bigl(\dmat(t+1)\bigr) \text{ upon choosing } \gtmat(0) = \grd \Objfunc\bigl( \dmat(0)  \bigr).  
    \end{align*}
    This concludes the first part. 

    For the second part, using Lemma \ref{it:port_1}, we write
    \begin{align*}
        \norm{\avggtmat(t) - \grd \objfunc\bigl( \avgdmat(t)^{\top} \bigr)} &= \norm{\grd \av{\Objfunc}(\dmat(t)) - \grd \objfunc\bigl( \avgdmat(t)^{\top} \bigr)}\\
        & \le \norm{\frac{1}{\agents} \sum_{i=1}^{\agents}\Bigl( \grd \objfunc_i ( \dvar_i(t)) - \grd \objfunc_i\bigl( \avgdmat(t)^{\top} \bigr)\Bigr)} \\
        & \le \frac{1}{\agents} \sum_{i=1}^{\agents} \norm{\grd \objfunc_i ( \dvar_i(t)) - \grd \objfunc_i\bigl( \avgdmat(t)^{\top}\bigr)} \\ 
        & \le \frac{\lips}{\agents} \sum_{i=1}^{\agents} \norm{\dvar_i(t) - \avgdmat(t)^{\top}} \le \sqrt{ \frac{\lips^2}{\agents} \sum_{i=1}^{\agents} \norm{\dvar_i(t) - \avgdmat(t)^{\top}}^2 } = \frac{\lips}{\sqrt{\agents}} \norm{\dmat(t) - \ones \avgdmat(t)},
    \end{align*}
    completing the proof for the second part.

    The proof for the third part is straightforward, and follows by multiplying both sides of the update rule for the decision variable in \eqref{eq:update rule}.

    Let us now focus on proving the fourth part. For each iteration \(t\), expanding the square in \(\norm{\cdmat(t) - \ones \avgcdmat(t)}^2 \) and simplifying the resultant expressions, we get
    \begin{align}\label{eq:random}
        \norm{\cdmat(t) - \ones \avgcdmat(t)}^2 
        &= \norm{\cdmat(t)}^2 + \agents \norm{\frac{1}{\agents} \ones^{\top} \cdmat(t)}^2 \nn\\ & \qquad \qquad - 2 \sum_{i=1}^{\agents} \inprod{\hess \objfunc_i \big(\dvar_i(t)\big){\inverse} \gtvar_i(t)}{\frac{1}{\agents} \ones^{\top} \cdmat(t)}\nn\\
        & = \norm{\cdmat(t)}^2 + \agents \norm{\frac{1}{\agents} \ones^{\top} \cdmat(t)}^2  - 2 \agents \underbrace{\inprod{\frac{1}{\agents} \ones^{\top} \cdmat(t)}{\frac{1}{\agents} \ones^{\top} \cdmat(t)}}_{\teL \norm{\frac{1}{\agents} \ones^{\top} \cdmat(t)}^2} \nn\\
        & = \norm{\cdmat(t)}^2 - \agents \norm{\frac{1}{\agents} \ones^{\top} \cdmat(t)}^2\nn\\
        & \le \norm{\cdmat(t)}^2 = \sum_{i=1}^{\agents} \norm{\hess \objfunc_i \big(\dvar_i(t) \big){\inverse} \gtvar_i(t)}^2\nn\\
        & \le \frac{1}{\strconv^2} \norm{\gtmat(t)}^2 \quad \text{(from \eqref{eq:strongly_convex_Lips})},
    \end{align}
    concluding the proof.

    The proof for the fifth part is simple. Expanding the terms, we have the following set of inequalities:
    \begin{align*}
        \norm{\grd \av{\Objfunc} \big(\ones \avgdmat(t) \big) - \grd \objfunc\big(\dvar^{\ast}\big)} &\le \norm{\frac{1}{\agents} \sum_{i=1}^{\agents} \Big(\grd \objfunc_i\big(\avgdmat(t)^{\top} \big) - \grd \objfunc_i(\dvar^{\ast}) \Big)}\\
        & \le \frac{1}{\agents}\sum_{i=1}^{\agents} \norm{\grd \objfunc_i\big(\avgdmat(t)^{\top} \big) - \grd \objfunc_i(\dvar^{\ast})} \\
        & \stackrel{(\ddagger)}{\le} \frac{\lips}{\agents}\sum_{i=1}^{\agents} \norm{\avgdmat(t)^{\top} - \dvar^{\ast}} = \frac{\lips}{\agents} \agents \norm{\avgdmat(t)^{\top} - \dvar^{\ast}}\\
        & = \lips \norm{\avgdmat(t)^{\top} - \dvar^{\ast}},
    \end{align*}
    where \((\ddagger)\) follows from Assumption \ref{assum:Standard assumptions}, therefore, proving the assertion.

    We now focus on part \ref{it:port_6}. Using the fact that for each \(i \in \aset[]{1,2,\ldots,\agents}\), \(\objfunc_i(\cdot)\) is \(\lips\)-smooth from Assumption \ref{assum:Standard assumptions},
    \begin{align*}
        \norm{\grd \Objfunc\big(\dmat(t+1)\big) - \Objfunc\big(\dmat(t)\big)}^2 
        &  \le \sum_{i=1}^{\agents} \norm{\grd \objfunc_i(\dvar_i(t+1)) - \objfunc_i(\dvar_i(t))}^2 \\
        & \le \lips^2 \norm{\dmat(t+1) - \dmat(t)}^2.
    \end{align*}
    Let us now analyze the term \(\norm{\dmat(t+1) - \dmat(t)}^2\). Observe that \((\identity{\agents} - \Wght)\ones \avgdmat(t) = \ones \avgdmat(t) - \Wght \ones \avgdmat(t) =  \ones \avgdmat(t) - \ones \avgdmat(t) = 0\). Using this and from \eqref{eq:update rule}, the above term can be simplified as
    \begin{align}\label{eq:crucial_step}
        &\norm{\dmat(t+1) - \dmat(t)}^2 \nn\\
        & = \left\lVert\constsz(\identity{\agents} - \Wght)\dmat(t) - \constsz(\identity{\agents} - \Wght)\dmat(t) - \constsz(\identity{\agents} - \Wght)\edvm(t) \right. \nn\\& \hspace{1cm}+ \left. \constsz(\identity{\agents} - \Wght)\ones \avgdmat(t) - \stsz \cdmat(t)\right\rVert^2\nn\\
        & = \left\lVert \constsz(\identity{\agents} - \Wght)\big(\dmat(t) - \edvm(t)\big) - \constsz(\identity{\agents} - \Wght)\big(\dmat(t) - \ones \avgdmat(t)\big) - \stsz \cdmat(t))\right\rVert^2\nn\\
        & \stackrel{(a)}{\le} 3 \constsz^2 \quanto^2 \norm{\dmat(t) - \edvm(t)}^2 + 3 \constsz^2 \quanto^2 \norm{\dmat(t) - \ones \avgdmat(t)}^2 + 3 \stsz^2 \norm{\cdmat(t)}^2\nn\\
        & \stackrel{(b)}{\le}  3 \constsz^2 \quanto^2 \norm{\dmat(t) - \edvm(t)}^2 + 3 \constsz^2 \quanto^2 \norm{\dmat(t) - \ones \avgdmat(t)}^2 + \frac{3 \stsz^2}{\strconv^2} \norm{\gtmat(t)}^2. 
    \end{align}
    Here \((a)\) is implied from \eqref{eq:aux_step2} and \((b)\) follows from \eqref{eq:random}. Therefore,
    \begin{align*}
        &\norm{\grd \Objfunc\big(\dmat(t+1)\big) - \Objfunc\big(\dmat(t)\big)}^2 \\
        & \le \lips^2 \norm{\dmat(t+1) - \dmat(t)}^2\\
        & \le 3 \lips^2 \constsz^2 \quanto^2 \norm{\dmat(t) - \edvm(t)}^2 + 3 \lips^2\constsz^2 \quanto^2 \norm{\dmat(t) - \ones \avgdmat(t)}^2 + \frac{3 \lips^2 \stsz^2}{\strconv^2} \norm{\gtmat(t)}^2.
    \end{align*}
    This completes the proof of part \eqref{it:port_6}.
\end{proof}

\subsection{Proof of Theorem \ref{thm:key thm}}\label{subappen:main proof}
We state the following crucial result without its proof:
\begin{proposition}
    \label{prop:aux_res_1}
    For any matrices \(A_1, A_2 \in \Rbb^{\agents \times \dimdv}\), we have 
    \begin{align*}
        \norm{A_1 + A_2}^2 \le (1+\tau')\norm{A_1}^2 + \bigg(1 + \frac{1}{\tau'} \bigg)\norm{A_2}^2,
    \end{align*}
    for any constant \(\tau' > 0\). Moreover, let \(A_3 \in \Rbb^{\agents \times \dimdv}\). Then we also have
    \begin{align*}
        &\norm{A_1 + A_2}^2 \le \ol{\tau} \norm{A_1}^2 + \frac{\ol{\tau}}{\ol{\tau} - 1}\norm{A_2}^2 \text{ and } \norm{A_1 + A_2 + A_3}^2 \le \wt{\tau} \norm{A_1}^2 + \frac{2 \wt{\tau} }{\wt{\tau} - 1} \Big(\norm{A_2}^2 + \norm{A_3}^2\Big)
    \end{align*}
    for any constant \(\ol{\tau},\, \wt{\tau} >1\). \hfill \qedsymbol
\end{proposition}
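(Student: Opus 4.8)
The plan is to prove the first inequality directly and then obtain the remaining two as immediate corollaries via parameter substitution and a crude two-term split. Throughout, $\norm{\cdot}$ is the Frobenius norm on $\Rbb^{\agents \times \dimdv}$, which is induced by the Frobenius inner product $\inprod{\cdot}{\cdot}$, so all the usual Hilbert-space identities are available.

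For the first estimate, I would start from the expansion $\norm{A_1 + A_2}^2 = \norm{A_1}^2 + 2\inprod{A_1}{A_2} + \norm{A_2}^2$ and bound only the cross term. Fixing any $\tau' > 0$, nonnegativity of $\norm{\sqrt{\tau'}\,A_1 - \tfrac{1}{\sqrt{\tau'}}A_2}^2$ expands to $2\inprod{A_1}{A_2} \le \tau'\norm{A_1}^2 + \tfrac{1}{\tau'}\norm{A_2}^2$. Substituting this into the expansion and collecting the coefficients of $\norm{A_1}^2$ and $\norm{A_2}^2$ yields $\norm{A_1+A_2}^2 \le (1+\tau')\norm{A_1}^2 + \bigl(1 + \tfrac{1}{\tau'}\bigr)\norm{A_2}^2$, which is the first claim.

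The second inequality follows by choosing $\tau' = \ol{\tau} - 1$, which is admissible since $\ol{\tau} > 1$ forces $\tau' > 0$; then $1 + \tau' = \ol{\tau}$ and $1 + \tfrac{1}{\tau'} = \tfrac{\ol{\tau}}{\ol{\tau}-1}$, giving exactly the stated bound. For the third inequality, I would group the last two summands and apply the second inequality with $A_2$ replaced by $A_2 + A_3$ and $\ol{\tau} = \wt{\tau}$, obtaining $\norm{A_1 + A_2 + A_3}^2 \le \wt{\tau}\norm{A_1}^2 + \tfrac{\wt{\tau}}{\wt{\tau}-1}\norm{A_2+A_3}^2$. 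The factor $\norm{A_2+A_3}^2$ is then controlled by the elementary bound $\norm{A_2+A_3}^2 \le 2\norm{A_2}^2 + 2\norm{A_3}^2$ (the first inequality with $\tau' = 1$), and substituting produces the coefficient $\tfrac{2\wt{\tau}}{\wt{\tau}-1}$ in front of $\norm{A_2}^2 + \norm{A_3}^2$, as required.

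There is no genuine obstacle here: each step is an elementary Hilbert-space manipulation, so the only care needed is the bookkeeping of the scaling parameters. In particular, one must recognize that the substitution $\tau' = \ol{\tau}-1$ converts the free-parameter form into the $\ol{\tau}$-form, and that splitting $A_2+A_3$ with the crude factor-of-two bound is precisely what generates the $2\wt{\tau}/(\wt{\tau}-1)$ coefficient rather than a tighter constant. These are the standard Peter--Paul (Young) inequalities, stated at exactly the level of generality needed for the matrix recursions appearing in the main proof.
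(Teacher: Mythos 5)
Your proof is correct. Note that the paper itself states Proposition \ref{prop:aux_res_1} \emph{without} proof (``We state the following crucial result without its proof''), so there is no paper argument to compare against; your derivation --- Young's inequality on the cross term of the Frobenius inner-product expansion, the substitution \(\tau' = \ol{\tau}-1\) to pass to the \(\ol{\tau}\)-form, and the split \(\norm{A_2+A_3}^2 \le 2\norm{A_2}^2 + 2\norm{A_3}^2\) to produce the \(\tfrac{2\wt{\tau}}{\wt{\tau}-1}\) coefficient --- is exactly the standard argument the authors are implicitly invoking, and your use of the Frobenius norm is consistent with how the paper applies the proposition (e.g., the inner-product expansions in the proof of Lemma \ref{lem:portmanteuau 1}).
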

Substituting \(\ol{\tau} = 2\) and \(\wt{\tau} = 3\), the above inequalities simplifies as
\begin{align}\label{eq:aux_step2}
    \begin{cases}
        \norm{A_1 + A_2}^2 \le 2\norm{A_1}^2 + 2 \norm{A_2}^2,\\
        \norm{A_1 + A_2 + A_3}^2 \le 3 \norm{A_1}^2 + 3 \norm{A_2}^2 + 3\norm{A_3}^2.
    \end{cases}
\end{align}
We proceed to the proof of Lemma \ref{lem: key lemma}.
\begin{proof}[Proof of Lemma \ref{lem: key lemma}]
    The idea, motivated by \cite{ref:GQ-NL-17}, is to bound each of the errors in \(\err(\cdot)\) in terms of their previous values. Let us start by analyzing the optimization error \(\opterr(\cdot)\). Recall that \(\sigalg_t = \sigma \bigl( \dmat(0), \gtmat(0),\dmat(1), \gtmat(1), \ldots, \dmat(t), \gtmat(t)\bigr)\), and the notation established in \S \ref{sec:algo}. 

    \subsection*{Step \(1\): Analysis of the optimality error}
    Observe that 
    \begin{align*}
        \av{\cdmat}(t) & = \frac{1}{\agents}\ones^{\top} \cdmat(t) = \frac{1}{\agents} \sum_{i=1}^{\agents} \biggl(\hess \objfunc_i(\dvar_i(t)){\inverse} \Bigl(\gtvar_i(t) - \grd \objfunc \bigl(\avgdmat(t)^{\top} \bigr) + \grd \objfunc \bigl(\avgdmat(t)^{\top}\bigr) \Bigr)\biggr)^{\top}\\
        & = \frac{1}{\agents} \sum_{i=1}^{\agents} \biggl(\hess \objfunc_i(\dvar_i(t)){\inverse} \Bigl(\gtvar_i(t) - \grd \objfunc \bigl(\avgdmat(t)^{\top} \bigr) \Bigr)\biggr)^{\top} \\ & \hspace{4cm} + \frac{1}{\agents} \sum_{i=1}^{\agents} \Bigl(\hess \objfunc_i(\dvar_i(t)){\inverse}  \grd \objfunc \bigl(\avgdmat(t)^{\top}\bigr) \Bigr)^{\top}.
    \end{align*}
    Substituting the above in \eqref{eq:update rule} and invoking Lemma \ref{it:port_2}, we get
    \begin{align*}
        \avgdmat(t+1)^{\top}  - \dvar^{\ast}&= \avgdmat(t)^{\top} - \stsz \frac{1}{\agents} \cdmat(t)^{\top}\ones - \dvar^{\ast} \\
        & = \avgdmat(t)^{\top} - \stsz \frac{1}{\agents} \sum_{i=1}^{\agents} \hess \objfunc_i(\dvar_i(t)){\inverse}  \grd \objfunc \bigl(\avgdmat(t)^{\top}\bigr) - \dvar^{\ast} \\ & \hspace{1.5cm} - \stsz \frac{1}{\agents}\sum_{i=1}^{\agents} \hess \objfunc_i(\dvar_i(t)){\inverse} \Bigl(\gtvar_i(t) - \grd \objfunc \bigl(\avgdmat(t)^{\top} \bigr) \Bigr).
    \end{align*}
    Now, consider the term
    \begin{align*}
        &\norm{\avgdmat(t)^{\top} - \stsz \frac{1}{\agents} \sum_{i=1}^{\agents} \hess \objfunc_i(\dvar_i(t)){\inverse}  \grd \objfunc \bigl(\avgdmat(t)^{\top}\bigr) - \dvar^{\ast} }\\
        & = \norm{\avgdmat(t)^{\top} - \dvar^{\ast} - \stsz \frac{1}{\agents} \sum_{i=1}^{\agents} \hess \objfunc_i(\dvar_i(t)){\inverse} \Bigl( \grd \objfunc \bigl(\avgdmat(t)^{\top}\bigr)  - \grd \objfunc(\dvar^{\ast}) \Bigr) }\\
        & = \norm{\avgdmat(t)^{\top} - \dvar^{\ast} - \stsz \frac{1}{\agents} \sum_{i=1}^{\agents} \hess \objfunc_i(\dvar_i(t)){\inverse} \int_0^1 \hess \objfunc\Bigl(\dvar^{\ast} + \varsigma \bigl(\avgdmat(t)^{\top} - \dvar^{\ast} \bigr)\Bigr) \bigl(\avgdmat(t)^{\top} - \dvar^{\ast} \bigr) \odif{\varsigma} } \\
        & = \norm{\bigg(\identity{\agents} - \stsz \frac{1}{\agents} \sum_{i=1}^{\agents} \hess \objfunc_i(\dvar_i(t)){\inverse} \int_0^1 \hess \objfunc\Bigl(\dvar^{\ast} + \varsigma \bigl(\avgdmat(t)^{\top} - \dvar^{\ast} \bigr)\Bigr)\odif{\varsigma}\bigg)\bigl(\avgdmat(t)^{\top} - \dvar^{\ast} \bigr)}\\
        & \le \norm{\bigg(\identity{\agents} - \stsz \frac{1}{\agents} \sum_{i=1}^{\agents} \hess \objfunc_i(\dvar_i(t)){\inverse} \int_0^1 \hess \objfunc\Bigl(\dvar^{\ast} + \varsigma \bigl(\avgdmat(t)^{\top} - \dvar^{\ast} \bigr)\Bigr)\odif{\varsigma} \bigg)} \norm{\bigl(\avgdmat(t)^{\top} - \dvar^{\ast} \bigr)},
    \end{align*}
    where the last inequality follows from the sub-multiplicativity of the norm. From \eqref{eq:strongly_convex_Lips} and from the fact that \(\stsz \le \frac{2\lips}{3 \strconv} \le \frac{\lips}{\strconv}\) such that \(1 - \stsz\frac{\strconv}{\lips} \ge 0\) is ensured
    , we write
    \begin{align*}
        &\norm{\bigg(\identity{\agents} - \stsz \frac{1}{\agents} \sum_{i=1}^{\agents} \hess \objfunc_i(\dvar_i(t)){\inverse} \int_0^1 \hess \objfunc\Bigl(\dvar^{\ast} + \varsigma \bigl(\avgdmat(t)^{\top} - \dvar^{\ast} \bigr)\Bigr)\odif{\varsigma} \bigg)}\\
        & \le \norm{\bigg(1 - \stsz \frac{1}{\agents \lips} \strconv \agents \bigg)\identity{\agents}} = \norm{\bigg(1 - \stsz \frac{\strconv}{\lips} \bigg)\identity{\agents}},
    \end{align*}
    and plugging it back, we recover
    \begin{align}\label{eq:arb step}
        &\norm{\avgdmat(t)^{\top} - \stsz \frac{1}{\agents} \sum_{i=1}^{\agents} \hess \objfunc_i(\dvar_i(t)){\inverse}  \grd \objfunc \bigl(\avgdmat(t)^{\top}\bigr) - \dvar^{\ast} }\nn \\
        & \le \norm{\bigg(1 - \stsz \frac{\strconv}{\lips} \bigg)\identity{\agents}} \norm{\bigl(\avgdmat(t)^{\top} - \dvar^{\ast} \bigr)}\nn \\
        & \le \bigg(1 - \stsz \frac{\strconv}{\lips} \bigg) \norm{\bigl(\avgdmat(t)^{\top} - \dvar^{\ast} \bigr)}.
    \end{align}
    We now focus on the term \(T \Let \norm{\frac{1}{\agents}\sum_{i=1}^{\agents} \hess \objfunc_i(\dvar_i(t)){\inverse} \Bigl( \grd \objfunc \bigl(\avgdmat(t)^{\top} \bigr) - \gtvar_i(t)  \Bigr)}^2\),
    which can be written as
    \begin{align*}
        T &= \norm{\frac{1}{\agents}\sum_{i=1}^{\agents} \hess \objfunc_i(\dvar_i(t)){\inverse} \Bigl( \grd \objfunc \bigl(\avgdmat(t)^{\top} \bigr) - \grd \ol{\Objfunc}\big( \dmat(t)\big) + \grd \ol{\Objfunc}\big( \dmat(t)\big) - \gtvar_i(t)  \Bigr)}^2  \\
        & \le 2\norm{\frac{1}{\agents}\sum_{i=1}^{\agents} \hess \objfunc_i(\dvar_i(t)){\inverse} \Bigl( \grd \objfunc \bigl(\avgdmat(t)^{\top} \bigr) - \grd \ol{\Objfunc}\big( \dmat(t)\big) \Bigr)}^2 \\ & \hspace{1.5cm}+ 2\norm{\frac{1}{\agents}\sum_{i=1}^{\agents} \hess \objfunc_i(\dvar_i(t)){\inverse} \Bigl(  \grd \ol{\Objfunc}\big( \dmat(t)\big) - \gtvar_i(t)  \Bigr)}^2,
    \end{align*}
    where the above inequality follows from Proposition \ref{prop:aux_res_1} by substituting \(\tau'=1\). We now focus on the first term and establish an upper bound. Note that \[\norm{\frac{1}{\agents}\sum_{i=1}^{\agents} \hess \objfunc_i(\dvar_i(t)){\inverse} } \le \frac{1}{\agents}\sum_{i=1}^{\agents}\norm{\hess \objfunc_i(\dvar_i(t)){\inverse} } \le \frac{1}{\strconv}\]
    and consequently, one can write
    \begin{align*}
        I_1 &\Let \norm{\frac{1}{\agents}\sum_{i=1}^{\agents} \hess \objfunc_i(\dvar_i(t)){\inverse} \Bigl( \grd \objfunc \bigl(\avgdmat(t)^{\top} \bigr) - \grd \ol{\Objfunc}\big( \dmat(t)\big) \Bigr)}^2 \\
        & = \stsz^2 \norm{\frac{1}{\agents}\sum_{i=1}^{\agents} \hess \objfunc_i(\dvar_i(t)){\inverse} }^2 \norm{\grd \objfunc \bigl(\avgdmat(t)^{\top} \bigr) - \grd \ol{\Objfunc}\big( \dmat(t)\big)}^2\\
        & \le \frac{\stsz^2}{\strconv^2} \norm{\grd \objfunc \bigl(\avgdmat(t)^{\top} \bigr) - \grd \ol{\Objfunc}\big( \dmat(t)\big)}^2
        \le \frac{\stsz^2 \lips}{\strconv^2 \agents} \norm{\dmat(t) - \ones \ol{\dmat}(t)}^2.
    \end{align*}
    The last inequality hold because \[\norm{\grd \objfunc \bigl(\avgdmat(t)^{\top} \bigr) - \grd \ol{\Objfunc}\big( \dmat(t)\big)} \le \frac{\lips}{\agents} \sum_{i=1}^{\agents} \norm{\ol{\dmat}(t)^{\top} - \dvar_i(t)} \le \frac{\lips}{\sqrt{\agents}} \norm{\dmat(t) - \ones \ol{\dmat}(t)},\]
    and therefore, \(\norm{\grd \objfunc \bigl(\avgdmat(t)^{\top} \bigr) - \grd \ol{\Objfunc}\big( \dmat(t)\big)}^2 \le \frac{\lips}{\sqrt{\agents}} \norm{\dmat(t) - \ones \ol{\dmat}(t)}^2.\) Let us now focus on the second term \(\sqrt{I_2} \Let \norm{\frac{1}{\agents}\sum_{i=1}^{\agents} \hess \objfunc_i(\dvar_i(t)){\inverse} \Bigl(  \grd \ol{\Objfunc}\big( \dmat(t)\big) - \gtvar_i(t)  \Bigr)}.\) Observe that
    \begin{align*}
        \sqrt{I_2} \le \frac{\stsz}{\agents} \sum_{i=1}^{\agents} \norm{\hess \objfunc_i(\dvar_i(t)){\inverse} \big(\grd \ol{\Objfunc}(\dmat(t)) - \gtvar_i(t)\big)} &\le \frac{\stsz}{\strconv \agents}\sum_{i=1}^{\agents} \norm{\grd \ol{\Objfunc}(\dmat(t)) - \gtvar_i(t)}\\
        & \le \frac{\stsz}{\strconv \sqrt{\agents}} \norm{\gtmat(t) - \ones \grd \ol{\Objfunc}(\dmat(t))}.
    \end{align*}
    Therefore, \(I_2^2 \le \frac{\stsz^2}{\strconv^2 \agents} \norm{\gtmat(t) - \ones \grd \ol{\Objfunc}(\dmat(t))}^2\), and finally, we obtain
    \begin{align}\label{eq:aux_stepp}
        T &\le 2I_1 + 2I_2
        \le \frac{2\stsz^2 \lips}{\strconv^2 \agents} \norm{\dmat(t) - \ones \ol{\dmat}(t)}^2 + \frac{2\stsz^2}{\strconv^2 \agents} \norm{\gtmat(t) - \ones \grd \ol{\Objfunc}(\dmat(t))}^2.
    \end{align}
    Let us now consider the optimization error
    \begin{align*}
        &\EE \cexpecof[\Big]{\norm{\avgdmat(t+1)^{\top}  - \dvar^{\ast}}^2 \given \sigalg_t} = \left \lVert\avgdmat(t)^{\top} - \stsz \frac{1}{\agents} \sum_{i=1}^{\agents} \hess \objfunc_i(\dvar_i(t)){\inverse}  \grd \objfunc \bigl(\avgdmat(t)^{\top}\bigr) - \dvar^{\ast}  \right. \\ & \hspace{1cm} \left. +\stsz \frac{1}{\agents}\sum_{i=1}^{\agents} \hess \objfunc_i(\dvar_i(t)){\inverse} \Bigl( \grd \objfunc \bigl(\avgdmat(t)^{\top} \bigr) - \gtvar_i(t)  \Bigr) \right \rVert^2 \\
        & \le (1 + \tau_1) \left \lVert\avgdmat(t)^{\top} - \stsz \frac{1}{\agents} \sum_{i=1}^{\agents} \hess \objfunc_i(\dvar_i(t)){\inverse}  \grd \objfunc \bigl(\avgdmat(t)^{\top}\bigr) - \dvar^{\ast}  \right \rVert^2 \\ & \quad+ \biggl( 1 + \frac{1}{\tau_1}\biggr) \norm{\stsz \frac{1}{\agents}\sum_{i=1}^{\agents} \hess \objfunc_i(\dvar_i(t)){\inverse} \Bigl( \grd \objfunc \bigl(\avgdmat(t)^{\top} \bigr) - \gtvar_i(t)  \Bigr)}^2\\
        & \le (1 + \tau_1) \bigg(1 - \stsz \frac{\strconv}{\lips} \bigg)^2 \norm{\bigl(\avgdmat(t)^{\top} - \dvar^{\ast} \bigr)}^2 + \biggl( 1 + \frac{1}{\tau_1}\biggr) \frac{2\stsz^2 \lips^2}{\strconv^2 \agents} \norm{ \avgdmat(t)^{\top}  - \ones \avgdmat(t) }^2 \\&\hspace{1cm} + \frac{2\stsz^2}{\strconv^2 \agents} \norm{\gtmat(t) - \ones \grd \ol{\Objfunc}(\dmat(t))}^2 
    \end{align*}
    Choose \(\tau_1 = \frac{\stsz \strconv}{2 \lips}\), and observe that 
    \((1 + \tau_1) \big(1 - \stsz \frac{\strconv}{\lips} \big)^2 = \big(1 + \frac{\stsz \strconv}{2 \lips}\big) \big(1 - \stsz \frac{\strconv}{\lips} \big)^2 = 1 - \frac{3 \stsz \strconv}{2 \lips} + \frac{\stsz^3 \strconv^3}{2 \lips^3}\), \(\bigl( 1 + \frac{1}{\tau_1}\bigr) \frac{\stsz^2 \lips^2}{\strconv^2 \agents} = \frac{\stsz^2 \lips^2}{\strconv^2 \agents} + \frac{2 \stsz^2 \lips^3}{\stsz \strconv^3 \agents}\), and \(\bigl( 1 + \frac{1}{\tau_1}\bigr) \frac{\stsz^2 }{\strconv^2 \agents} = \frac{\stsz^2 }{\strconv^2 \agents} + \frac{2 \stsz^2 \lips}{\stsz \strconv^3 \agents}\).
    Combining everything, we get the following final upper bound on the optimality error
    \begin{align}\label{eq:opt err}
        &\EE \expecof[\Big]{\norm{\avgdmat(t+1)^{\top}  - \dvar^{\ast}}^2}
        \le \bigg( 1 - \frac{3 \stsz \strconv}{2 \lips} + \frac{\stsz^3 \strconv^3}{2 \lips^3}\bigg) \norm{\bigl(\avgdmat(t)^{\top} - \dvar^{\ast} \bigr)}^2 \nn \\& \hspace{4.5cm} + \bigg(\frac{\stsz^2 \lips^2}{\strconv^2 \agents} + \frac{2 \stsz^2 \lips^3}{\stsz \strconv^3 \agents} \bigg) \norm{ \avgdmat(t)^{\top}  - \ones \avgdmat(t) }^2 \nn \\&
        \hspace{2cm} \bigg(\frac{\stsz^2 }{\strconv^2 \agents} + \frac{2 \stsz^2 \lips}{\stsz \strconv^3 \agents} \bigg) \norm{\gtmat(t) - \ones \grd \ol{\Objfunc}(\dmat(t))}^2.
    \end{align}
    
    \subsection*{Step \(2\): Analyzing the consensus error} Invoking Lemma \ref{it:error_comp_1}, we get the expression
    \begin{align*}
        &\dmat(t) - \constsz \big(\edvm(t) - \wedvm(t) \big) - \ones \avgdmat(t)\\
        & =  \dmat(t) - \constsz \big(\identity{\agents} - \Wght \big) \edvm(t) - \ones \avgdmat(t) + \constsz \big(\identity{\agents} - \Wght\big) \dmat(t) - \constsz \big(\identity{\agents} - \Wght\big) \dmat(t) \\
        & = \constsz \big(\identity{\agents} - \Wght\big) \big(\dmat(t) - \edvm(t) \big) + \underbrace{\big((1-\constsz)\identity{\agents} + \constsz \Wght \big)}_{\teL \wt{\Wght}} \dmat(t) - \ones \avgdmat(t)\\
        & = \constsz \big(\identity{\agents} - \Wght\big) \big(\dmat(t) - \edvm(t) \big) + \big(\wt{\Wght} \dmat(t) - \ones \avgdmat(t)\big).
    \end{align*}
    From the subsequent discussion after Assumption \ref{assum:on graph}, let us now simplify the consensus error
    \begin{align*}
        &\EE \cexpecof[\Big]{\norm{\dmat(t+1) - \ones \avgdmat(t)}^2 \given \sigalg_t}\\
        & = \EE \cexpecof[\bigg]{\norm{\big(\wt{\Wght} \dmat(t) - \ones \avgdmat(t)\big) + \constsz \big(\identity{\agents} - \Wght\big) \big(\dmat(t) - \edvm(t) \big) + \stsz \big( \ones \avgcdmat(t) - \cdmat(t)\big)}^2 \given \sigalg_t }\\
        & \stackrel{(\star)}{\le} \wt{\specnorm}^2 (1 + \tau_2) \EE \cexpecof[\Big]{\norm{\big(\dmat(t) - \ones \avgdmat(t)\big)}^2 \given \sigalg_t} + 2\stsz^2 \bigg(1 + \frac{1}{\tau_2}\bigg)  \EE \cexpecof[\Big]{\norm{ \cdmat(t) - \ones \avgcdmat(t)}^2 \given \sigalg_t} \\ & \hspace{2cm} + 2\bigg(1 + \frac{1}{\tau_2}\bigg) \constsz^2 \underbrace{\norm{\identity{\agents} - \Wght}^2}_{= \quanto} \EE \cexpecof[\Big]{\norm{\dmat(t) - \edvm (t)}^2\given \sigalg_t},
    \end{align*}
    where \((\star)\) follows from \eqref{eq:aux_step2} in Proposition
    \ref{prop:aux_res_1}. Appealing to Lemma \ref{it:error_comp_2} and Lemma \ref{it:port_4}, we have
    \begin{align*}
        &\EE \cexpecof[\Big]{\norm{\dmat(t) - \edvm(t)}^2 \given \sigalg_t} \le \Cgen \norm{\dmat(t) - \auxdv(t)}^2, \text{ and }\\
        &\EE \cexpecof[\Big]{\norm{ \cdmat(t) - \ones \avgcdmat(t)}^2 \given \sigalg_t} \le \frac{1}{\strconv^2} \norm{\gtmat(t)}^2,
    \end{align*}
    Plugging them back in, we obtain
    \begin{align}\label{eq:final_con_error}
        &\EE \cexpecof[\Big]{\norm{\dmat(t+1) - \ones \avgdmat(t)}^2 \given \sigalg_t} \nn \\
        & \le \wt{\specnorm}^2 (1 + \tau_2)  \norm{\dmat(t) - \ones \avgdmat(t)}^2 + \frac{2\stsz^2}{\strconv^2} \bigg(1 + \frac{1}{\tau_2}\bigg)  \norm{\gtmat(t)}^2 \nn \\& \hspace{2cm} + 2\bigg(1 + \frac{1}{\tau_2}\bigg) \constsz^2 \Cgen \quanto^2 \norm{\dmat(t) - \auxdv(t)}^2.
    \end{align}
    Let us upper bound  he term \(\norm{\gtmat(t)}^2\). Observe that
    \begin{align}\label{eq:prefinal_}
        &\norm{\gtmat(t)}^2 = \left \lVert \Big( \gtmat(t) - \ones \grd \av{\Objfunc} \big(\dmat(t) \big) \Big) + \Big(\ones \grd \av{\Objfunc} \big(\dmat(t) \big) - \ones \grd \av{\Objfunc} \big(\ones \avgdmat(t) \big)\Big)\right.\nn \\& \hspace{2.5cm} +\left.  \Big(\ones \grd \av{\Objfunc} \big(\ones \avgdmat(t) \big) - \ones \grd \objfunc\big(\dvar^{\ast}\big)\Big)\right \rVert^2 \nn \\
        & \le (1+ \tau_3) \norm{\gtmat(t) - \ones \grd \av{\Objfunc} \big(\dmat(t) \big)}^2 + 2 \bigg(1 + \frac{1}{\tau_3} \bigg) \underbrace{\norm{\ones \grd \av{\Objfunc} \big(\dmat(t) \big) - \ones \grd \av{\Objfunc} \big(\ones \avgdmat(t) \big)}^2}_{=\text{Term }1} \nn \\ & \hspace{2.5cm} + 2 \bigg(1 + \frac{1}{\tau_3} \bigg) \underbrace{\norm{\ones \grd \av{\Objfunc} \big(\ones \avgdmat(t) \big) - \ones \grd \objfunc\big(\dvar^{\ast}\big)}^2}_{=\text{Term }2}.
    \end{align}
    Let us first focus on Term \(1\). We note that \(\grd \av{\Objfunc}\big( \ones \avgdmat(t)\big) = \frac{1}{\agents} \sum_{i=1}^{\agents} \grd \objfunc_i \big(\avgdmat(t)^{\top}\big)^{\top} = \grd \objfunc \big(\avgdmat(t)^{\top}\big)^{\top}\), and from Lemma \ref{it:port_1}, \(\avggtmat(t) = \frac{1}{\agents} \ones^{\top} \grd \Objfunc(\dmat(t)) = \grd \av{\Objfunc}\big( \dmat(t)\big)\) for each \(t\). Therefore, from Lemma \ref{it:port_3},
    \begin{align}\label{eq:term1}
        \text{Term } 1= \norm{\ones \grd \av{\Objfunc} \big(\dmat(t) \big) - \ones \grd \av{\Objfunc} \big(\ones \avgdmat(t) \big)}^2 &= \agents \norm{\avggtmat(t)^{\top} - \grd \objfunc\bigl( \avgdmat(t)^{\top}\bigr) }^2 \nn\\
        & \le \agents \frac{\lips^2}{\agents} \norm{\dmat(t) - \ones \avgdmat(t)}^2 \nn\\
        & = \lips^2 \norm{\dmat(t) - \ones \avgdmat(t)}^2.
    \end{align}
    Let us now simplify Term \(2\): To that end, appealing to Lemma \ref{it:port_5}, Term \(2\) simplifies to
    \begin{align}
        \label{eq:term2}
        \text{Term }2= \norm{\ones \grd \av{\Objfunc} \big(\ones \avgdmat(t) \big) - \ones \grd \objfunc\big(\dvar^{\ast}\big)}^2 &= \agents \norm{\grd \av{\Objfunc} \big(\ones \avgdmat(t) \big) - \grd \objfunc\big(\dvar^{\ast}\big)}^2 \nn\\
        & = \agents \lips^2 \norm{\avgdmat(t)^{\top} - \dvar^{\ast}}^2.
    \end{align}
    Substituting \eqref{eq:term1} and \eqref{eq:term2} back to \eqref{eq:prefinal_}, 
    \begin{align}\label{eq:consen_error_gtmat}
        &\norm{\gtmat(t)}^2 \le (1+ \tau_3) \norm{\gtmat(t) - \ones \grd \av{\Objfunc} \big(\dmat(t) \big)}^2 + 2 \lips^2 \bigg(1 + \frac{1}{\tau_3} \bigg) \norm{\dmat(t) - \ones \avgdmat(t)}^2 \nn \\ & \hspace{2.5cm} + 2 \agents \lips^2 \bigg(1 + \frac{1}{\tau_3} \bigg) \norm{\avgdmat(t)^{\top} - \dvar^{\ast}}^2,
    \end{align}
    which on substituting back to \eqref{eq:final_con_error} yields
    \begin{align}\label{eq:final_con_err}
        &\EE \cexpecof[\Big]{\norm{\dmat(t+1) - \ones \avgdmat(t)}^2 \given \sigalg_t} \nn\\
        & \le \Bigg(\wt{\specnorm}^2 (1 + \tau_2) + \frac{4 \lips^2 \stsz^2}{\strconv^2}\bigg( 1 + \frac{1}{\tau_2}\bigg) \bigg( 1 + \frac{1}{\tau_3}\bigg) \Bigg) \norm{\dmat(t) - \ones \avgdmat(t)}^2 \nn\\
        & \hspace{2cm}+ \frac{4 \lips^2 \stsz^2 \agents}{\strconv^2} \bigg( 1 + \frac{1}{\tau_2}\bigg) \bigg( 1 + \frac{1}{\tau_3}\bigg) \norm{\avgdmat(t)^{\top} - \dvar^{\ast}}^2 \nn\\
        & \hspace{0.5cm} + \frac{2 \stsz^2}{\strconv^2} \bigg( 1 + \frac{1}{\tau_2}\bigg) \bigg( 1 + \tau_3\bigg) \norm{\gtmat(t) - \ones \grd \av{\Objfunc} \big(\dmat(t) \big)}^2 \nn \\
        & \hspace{3cm}+ 2 \constsz^2 \Cgen \quanto^2 \bigg(1 + \frac{1}{\tau_2}\bigg)  \norm{\dmat(t) - \auxdv(t)}^2.
    \end{align}
    Choose \(\tau_2 = \frac{1 - \wt{\specnorm}^2}{2 \wt{\specnorm}^2}\) and \(\tau_3 = 1\). Then \(1+\tau_2 = 1 + \frac{1 - \wt{\specnorm}^2}{2 \wt{\specnorm}^2} = \frac{1 + \wt{\specnorm}^2}{2 \wt{\specnorm}^2}\) and since \(\wt{\specnorm} < 1\) (from the discussion preceding Assumption \ref{assum:Standard assumptions}), we compute the coefficient as follows:
    \begin{align*}
        \begin{cases}
            &1 + \frac{1}{\tau_2} = \frac{1 + \wt{\specnorm}^2}{1 - \wt{\specnorm}^2} = \frac{1}{1 - \wt{\specnorm}} \underbrace{\frac{1 + \wt{\specnorm}^2}{1 + \wt{\specnorm}}}_{< 1} \le \frac{1}{1 - \wt{\specnorm}}; \vspace{1.5mm}\\
            & \wt{\specnorm}^2 (1 + \tau_2) + \frac{4 \lips^2 \stsz^2}{\strconv^2}\bigg( 1 + \frac{1}{\tau_2}\bigg) \bigg( 1 + \frac{1}{\tau_3}\bigg)  \vspace{1.5mm} \\
            & \hspace{1cm}= \wt{\specnorm}^2 \frac{1 + \wt{\specnorm}^2}{2 \wt{\specnorm}^2} + \frac{4 \lips^2 \stsz^2}{\strconv^2} \frac{1 + \wt{\specnorm}^2}{1 - \wt{\specnorm}^2} 2 \le \frac{1 + \wt{\specnorm}^2}{2} + \frac{8 \lips^2 \stsz^2}{\strconv^2 (1 - \wt{\specnorm})};\vspace{1.5mm} \\
            & \frac{4 \lips^2 \stsz^2 \agents}{\strconv^2} \bigg( 1 + \frac{1}{\tau_2}\bigg) \bigg( 1 + \frac{1}{\tau_3}\bigg)  = \frac{4 \lips^2 \stsz^2 \agents}{\strconv^2} \frac{1 + \wt{\specnorm}^2}{1 - \wt{\specnorm}^2} 2 \le \frac{8 \lips^2 \stsz^2 \agents}{\strconv^2(1 - \wt{\specnorm})};\vspace{1.5mm} \\
            & \frac{2 \stsz^2}{\strconv^2} \bigg( 1 + \frac{1}{\tau_2}\bigg) \bigg( 1 + \tau_3\bigg) = \frac{2 \stsz^2}{\strconv^2} \frac{1 + \wt{\specnorm}^2}{1 - \wt{\specnorm}^2} 2 \le \frac{4 \stsz^2}{\strconv^2(1 - \wt{\specnorm})};\vspace{1.5mm} \\
            & 2 \constsz^2 \Cgen \quanto^2 \bigg(1 + \frac{1}{\tau_2}\bigg) \le \frac{2 \constsz^2 \Cgen \quanto^2}{1 - \wt{\specnorm}}.
        \end{cases}
    \end{align*}
     Therefore, the consensus error \eqref{eq:final_con_err} admits the final expression
     \begin{align}
         \label{eq:final_exp_con_error}
         &\EE \cexpecof[\Big]{\norm{\dmat(t+1) - \ones \avgdmat(t)}^2 \given \sigalg_t} \nn\\
         & \le \Bigg(\frac{1 + \wt{\specnorm}^2}{2} + \frac{8 \lips^2 \stsz^2}{\strconv^2 (1 - \wt{\specnorm})} \Bigg) \norm{\dmat(t) - \ones \avgdmat(t)}^2 
         + \frac{8 \lips^2 \stsz^2 \agents}{\strconv^2(1 - \wt{\specnorm})} \norm{\avgdmat(t)^{\top} - \dvar^{\ast}}^2 \nn\\
         & \hspace{0.5cm} + \frac{4 \stsz^2}{\strconv^2(1 - \wt{\specnorm})} \norm{\gtmat(t) - \ones \grd \av{\Objfunc} \big(\dmat(t) \big)}^2 + \frac{2 \constsz^2 \Cgen \quanto^2}{1 - \wt{\specnorm}} \norm{\dmat(t) - \auxdv(t)}^2.
     \end{align}
    \subsection*{Step \(3\): Analyzing the gradient tracking error} Let us consider the gradient tracking error \(\gterr(t)\) for each \(t\). First, we consider the term,
    \begin{align*}
        &\norm{\gtmat(t+1) - \ones \avggtmat(t+1)}^2\\
        & = \norm{\gtmat(t+1) - \ones \avggtmat(t) + \ones \avggtmat(t) - \ones \avggtmat(t+1)}^2\\
        & = \norm{\gtmat(t+1) - \ones \avggtmat(t)}^2 + \norm{\ones \avggtmat(t+1) - \ones \avggtmat(t)}^2 \\
        & \hspace{3cm}- 2 
        \sum_{i=1}^{\agents} \inprod{\gtvar_i(t+1) - \avggtmat(t)}{\avggtmat(t+1) - \avggtmat(t)}\\
        & = \norm{\gtmat(t+1) - \ones \avggtmat(t)}^2 + \norm{\ones \avggtmat(t+1) - \ones \avggtmat(t)}^2 \\
        & \hspace{3cm} - 2 \agents \inprod{\avggtmat(t+1) - \avggtmat(t)}{\avggtmat(t+1) - \avggtmat(t)}\\
        & = \norm{\gtmat(t+1) - \ones \avggtmat(t)}^2 - \agents \norm{\ones \avggtmat(t+1) - \ones \avggtmat(t)}^2\\
        & \le \norm{\gtmat(t+1) - \ones \avggtmat(t)}^2.
    \end{align*}
    Second, we now analyze \(\norm{\gtmat(t+1) - \ones \avggtmat(t)}\). Expanding the term \(\gtmat(t+1)\) using \eqref{eq:update rule}, we obtain
    \begin{align*}
        &\norm{\gtmat(t+1) - \ones \avggtmat(t)}^2\\
        & = \norm{\gtmat(t) - \constsz\big(\egtm(t) - \wegtm(t) \big) + \grd \Objfunc\big(\dmat(t+1)\big) - \Objfunc\big(\dmat(t)\big) - \ones \avggtmat(t)}^2 \\
        & = \norm{\gtmat(t) - \constsz\big(\identity{\agents} - \Wght \big)\egtm(t) + \grd \Objfunc\big(\dmat(t+1)\big) - \Objfunc\big(\dmat(t)\big) - \ones \avggtmat(t)}^2\\
        & = \left \lVert\gtmat(t) + \constsz\big(\identity{\agents} - \Wght \big)\gtmat(t) - \constsz\big(\identity{\agents} - \Wght \big)\gtmat(t) - \constsz\big(\identity{\agents} - \Wght \big)\egtm(t) \right.\\ & \hspace{5cm} + \left. \grd \Objfunc\big(\dmat(t+1)\big) - \Objfunc\big(\dmat(t)\big) - \ones \avggtmat(t)\right \rVert^2\\
        & = \left\lVert \big((1 - \constsz )\identity{\agents} + \constsz \Wght\big)\gtmat(t) - \ones \avggtmat(t) + \constsz (\identity{\agents} - \Wght)\big( \gtmat(t) - \egtm(t)\big)\right. \\& \hspace{3cm} + \left. \grd \Objfunc\big(\dmat(t+1)\big) - \Objfunc\big(\dmat(t)\big)\right\rVert^2\\
        & \stackrel{(\star)}{\le} (1 + \tau_4) \wt{\specnorm}^2 \norm{\gtmat(t) - \ones \avggtmat(t)}^2 + 2 \constsz^2 \quanto^2 \bigg(1 + \frac{1}{\tau_4} \bigg) \norm{\gtmat(t) - \egtm(t)}^2 \\& \hspace{2cm} + 2 \bigg(1 + \frac{1}{\tau_4} \bigg) \norm{\grd \Objfunc\big(\dmat(t+1)\big) - \Objfunc\big(\dmat(t)\big)}^2;
    \end{align*}
    here \((\star)\) is obtained by using Lemma \ref{prop:aux_res_1} and subsequently, referring to the discussion preceding Assumption \ref{assum:Standard assumptions}. Recall from Lemma \ref{it:port_6} that
    \begin{align*}
        &\norm{\grd \Objfunc\big(\dmat(t+1)\big) - \Objfunc\big(\dmat(t)\big)}^2 \le 3 \lips^2 \constsz^2  \quanto^2 \norm{\dmat(t) - \edvm(t)}^2 \\& \hspace{1cm}+ 3 \lips^2 \constsz^2  \quanto^2 \norm{\dmat(t) - \ones \avgdmat(t)}^2 + \frac{3\lips^2 \stsz^2}{\strconv^2} \norm{\gtmat(t)}^2,
    \end{align*}
    and therefore, substituting the upper bound for \(\norm{\gtmat(t)}^2\) from \eqref{eq:consen_error_gtmat} in \((\star\star)\) below, we get
    \begin{align*}
        &\norm{\gtmat(t+1) - \ones \avggtmat(t)}^2\\
        & \le (1 + \tau_4) \wt{\specnorm}^2 \norm{\gtmat(t) - \ones \avggtmat(t)}^2 + 2 \constsz^2 \quanto^2 \bigg(1 + \frac{1}{\tau_4} \bigg) \norm{\gtmat(t) - \egtm(t)}^2 \\&  + 2 \bigg(1 + \frac{1}{\tau_4} \bigg) \Bigg( 3 \lips^2 \constsz^2  \quanto^2 \norm{\dmat(t) - \edvm(t)}^2 \\& \hspace{2cm} + 3 \lips^2 \constsz^2  \quanto^2 \norm{\dmat(t) - \ones \avgdmat(t)}^2 + \frac{3\lips^2 \stsz^2}{\strconv^2} \norm{\gtmat(t)}^2\Bigg)\\
        & \stackrel{(\star\star)}{\le} (1 + \tau_4) \wt{\specnorm}^2 \norm{\gtmat(t) - \ones \avggtmat(t)}^2 + 2 \constsz^2 \quanto^2 \bigg(1 + \frac{1}{\tau_4} \bigg) \norm{\gtmat(t) - \egtm(t)}^2 \\&  + 6 \lips^2 \constsz^2  \quanto^2 \bigg(1 + \frac{1}{\tau_4} \bigg)   \norm{\dmat(t) - \edvm(t)}^2 + 6 \lips^2 \constsz^2  \quanto^2 \bigg(1 + \frac{1}{\tau_4} \bigg) \norm{\dmat(t) - \ones \avgdmat(t)}^2\\ & \hspace{0.5cm} + \frac{6\lips^2 \stsz^2}{\strconv^2} \bigg(1 + \frac{1}{\tau_4} \bigg) \Bigg( (1+ \tau_3) \norm{\gtmat(t) - \ones \grd \av{\Objfunc} \big(\dmat(t) \big)}^2 + 2 \lips^2 \bigg(1 + \frac{1}{\tau_3} \bigg) \norm{\dmat(t) - \ones \avgdmat(t)}^2 \nn \\ & \hspace{2.5cm} + 2 \agents \lips^2 \bigg(1 + \frac{1}{\tau_3} \bigg) \norm{\avgdmat(t)^{\top} - \dvar^{\ast}}^2\Bigg).
    \end{align*}
    Recall from Lemma \ref{it:error_comp_2} that
    \begin{equation*}
        \begin{cases}
            \EE \cexpecof[\bigg]{\norm{\dmat(t) - \edvm(t)}^2 \given \sigalg_t} \le \Cgen \norm{\dmat(t) - \auxdv(t)}^2 \text{ }\\
            \EE \cexpecof[\bigg]{\norm{\gtmat(t) - \egtm(t)}^2 \given  \sigalg_t} \le \Cgen \norm{\gtmat(t) - \auxgt(t)}^2.
        \end{cases}
    \end{equation*}
    Stacking similar terms together, and in view of the above inequalities, the resultant expression is given by
    \begin{align*}
        &\EE \cexpecof[\Big]{\norm{\gtmat(t+1) - \ones \avggtmat(t)}^2 \given \sigalg_t}\\
        & \le \frac{12 \lips^4 \stsz^2 \agents}{\strconv^2} \bigg(1 + \frac{1}{\tau_4} \bigg)\bigg(1 + \frac{1}{\tau_3} \bigg) \norm{\avgdmat(t)^{\top} - \dvar^{\ast}}^2 \\
        & + \Bigg(  6 \lips^2 \constsz^2  \quanto^2 \bigg(1 + \frac{1}{\tau_4} \bigg) + \frac{12\lips^4 \stsz^2}{\strconv^2} \bigg(1 + \frac{1}{\tau_4} \bigg) \bigg(1 + \frac{1}{\tau_3} \bigg) \Bigg) \norm{\dmat(t) - \ones \avgdmat(t)}^2\\
        & + \Bigg((1 + \tau_4) \wt{\specnorm}^2 + \frac{6\lips^2 \stsz^2}{\strconv^2} \bigg(1 + \frac{1}{\tau_4} \bigg)  (1+ \tau_3) \Bigg) \norm{\gtmat(t) - \ones \grd \av{\Objfunc} \big(\dmat(t) \big)}^2 \\
        & + 6 \lips^2 \constsz^2  \quanto^2 \Cgen \bigg(1 + \frac{1}{\tau_4} \bigg)   \norm{\dmat(t) - \auxdv(t)}^2  + 2 \constsz^2 \quanto^2 \Cgen \bigg(1 + \frac{1}{\tau_4} \bigg) \norm{\gtmat(t) - \auxgt(t)}^2. 
    \end{align*}
    Choose \(\tau_3 = 1\) and \(\tau_4 = \frac{1 - \wt{\specnorm}^2}{2 \wt{\specnorm}^2}\), we simplify the coefficients in a similar way. The details are given below:
    \begin{itemize}[leftmargin=*, label = \(\circ\)]
        \item Note that \(1 + \tau_4 = \frac{1+\wt{\specnorm}^2}{2 \wt{\specnorm}}\) and \(1 + \frac{1}{\tau_4} = \frac{1+\wt{\specnorm}^2}{1-\wt{\specnorm}^2} \le \frac{1}{1 - \wt{\specnorm}} \frac{1+\wt{\specnorm}^2}{1+\wt{\specnorm}} \le \frac{1}{1 - \wt{\specnorm}}\).

        \item Let us compute the coefficients:
        \begin{align*}
            \begin{cases}
             \frac{12 \lips^4 \stsz^2 \agents}{\strconv^2} \bigg(1 + \frac{1}{\tau_4} \bigg)\bigg(1 + \frac{1}{\tau_3} \bigg) =  \frac{24 \lips^4 \stsz^2 \agents}{\strconv^2} \frac{1+\wt{\specnorm}^2}{1-\wt{\specnorm}^2} \le 
             \frac{24 \lips^4 \stsz^2 \agents}{\strconv^2(1 - \wt{\specnorm})} \vspace{1.5mm}\\
             6 \lips^2 \constsz^2  \quanto^2 \bigg(1 + \frac{1}{\tau_4} \bigg) + \frac{12\lips^4 \stsz^2}{\strconv^2} \bigg(1 + \frac{1}{\tau_4} \bigg) \bigg(1 + \frac{1}{\tau_3} \bigg) \le 
             \frac{6 \lips^2 \constsz^2 \quanto^2}{1 - \wt{\specnorm}} + \frac{24 \lips^4 \stsz^2}{\strconv^2(1 - \wt{\specnorm})} \vspace{1.5mm}\\
             (1 + \tau_4) \wt{\specnorm}^2 + \frac{6\lips^2 \stsz^2}{\strconv^2} \bigg(1 + \frac{1}{\tau_4} \bigg)  (1+ \tau_3) \le
             \frac{1+ \wt{\specnorm}^2}{2} + \frac{12 \lips^2 \stsz^2}{\strconv^2(1 - \wt{\specnorm})} 
            \vspace{1.5mm}\\
             6 \lips^2 \constsz^2  \quanto^2 \Cgen \bigg(1 + \frac{1}{\tau_4} \bigg) \le \frac{6 \lips^2 \constsz^2 \quanto^2 \Cgen}{1 - \wt{\specnorm}} \vspace{1.5mm}\\
             2 \constsz^2 \quanto^2 \Cgen \bigg(1 + \frac{1}{\tau_4} \bigg) \le \frac{2 \constsz^2 \quanto^2 \Cgen}{1 - \wt{\specnorm}}.
            \end{cases}
        \end{align*}
    \end{itemize}
    The final expression for the gradient tracking error is given by 
    \begin{align}
        \label{eq:final_GT_error}
        &\EE \expecof[\Big]{\norm{\gtmat(t+1) - \ones \avggtmat(t+1)}^2}\nn\\
        & \le \frac{24 \lips^4 \stsz^2 \agents}{\strconv^2(1 - \wt{\specnorm})} \norm{\avgdmat(t)^{\top} - \dvar^{\ast}}^2 + \bigg( \frac{6 \lips^2 \constsz^2 \quanto^2}{1 - \wt{\specnorm}} + \frac{24 \lips^4 \stsz^2}{\strconv^2(1 - \wt{\specnorm})} \bigg) \norm{\dmat(t) - \ones \avgdmat(t)}^2 \nn \\
        & \hspace{2cm} + \bigg(\frac{1+ \wt{\specnorm}^2}{2} + \frac{12 \lips^2 \stsz^2}{\strconv^2(1 - \wt{\specnorm})} \bigg) \norm{\gtmat(t) - \ones \grd \av{\Objfunc} \big(\dmat(t) \big)}^2 \nn \\
        & \hspace{1cm}+ \frac{6 \lips^2 \constsz^2 \quanto^2 \Cgen}{1 - \wt{\specnorm}} \norm{\dmat(t) - \auxdv(t)}^2 + \frac{2 \constsz^2 \quanto^2 \Cgen}{1 - \wt{\specnorm}} \norm{\gtmat(t) - \auxgt(t)}^2.
    \end{align}
    \subsection*{Step \(4\): Analyzing the compression errors associated with the decision variable} We now focus on the compression error. In Algorithm \ref{alg:sec_ord_comp}, the update rule to update \(\auxdv(t)\) for each \(t\) is given by \(\auxdv(t+1) = (1 - \condv)\auxdv(t) + \condv \edvm(t)\). Moreover, we recall from Definition \ref{def:General Compression Operator} the general compressor operator. In Algorithm \ref{alg:sec_ord_comp}, we adhere to \(\compscheme \bigl(\dmat(t) - \auxdv(t) \bigr) \equiv \gcompop \big( \dmat(t) - \auxdv(t)\big)\).
    With these notations, we now modify the compression error as follows: for each iteration \(t\),
    \begin{align*}
        &\norm{\dmat(t+1) - \auxdv(t+1)}^2\\
        & = \norm{\dmat(t+1) - (1 - \condv)\auxdv(t) - \condv \edvm(t)}^2\\
        & = \norm{\dmat(t+1) - (1 - \condv)\auxdv(t) - \condv \gcompop \big( \dmat(t) - \auxdv(t) \big) -  \condv \auxdv(t)}^2 \\
        & = \norm{\dmat(t+1) - \auxdv(t) - \condv \gcompop \big( \dmat(t) - \auxdv(t) \big)}^2.
    \end{align*}
    We use the shorthand notations
    \begin{equation*}
        \encode{\dvar}(t) \Let \gcompop \big( \dmat(t) - \auxdv(t) \big) \text{ and } \encode{\dvar}^{\scale}(t) \Let \frac{\gcompop \big( \dmat(t) - \auxdv(t) \big)}{\scale}. 
    \end{equation*}
    Therefore, for \(\scale>0\) defined as in Definition \ref{def:General Compression Operator},
    \begin{align*}
        &\norm{\dmat(t+1) - \auxdv(t) - \condv \encode{\dvar}(t)}^2\\
        & = \left \lVert\dmat(t+1) - \dmat(t) + \dmat(t) - \auxdv(t) - \condv \encode{\dvar}(t) \right.\\& \hspace{3cm} \left. + \condv \scale \big(\dmat(t) - \auxdv(t) \big) - \condv \scale \big(\dmat(t) - \auxdv(t) \big)\right\rVert^2 \\
        & = \left \lVert \dmat(t+1) - \dmat(t) + (1 - \condv \scale) \big(\dmat(t) - \auxdv(t)\big)  \right.\\& \hspace{3cm} \left. + \condv \scale \big(\dmat(t) - \auxdv(t)  - \encode{\dvar}^{\scale}(t)\big)  \right\rVert^2 \\
        & \le \taudv \norm{(1 - \condv \scale) \big(\dmat(t) - \auxdv(t)\big)  + \condv \scale \big(\dmat(t) - \auxdv(t)  - \encode{\dvar}^{\scale}(t)\big)}^2 \\& \hspace{3cm} + \frac{\taudv}{\taudv - 1}\norm{\dmat(t+1) - \dmat(t)}^2 \\ 
        & \le \taudv \bigg(\condv \scale \norm{\dmat(t) - \auxdv(t)  - \encode{\dvar}^{\scale}(t)}^2 + (1 - \condv \scale) \norm{ \big(\dmat(t) - \auxdv(t)\big)}^2 \bigg)\\
        & \hspace{2cm} + \frac{\taudv}{\taudv - 1}\norm{\dmat(t+1) - \dmat(t)}^2 \quad \text{(from convexity of norm)}
    \end{align*}
    for any \(\taudv>1\) and \(1 - \condv \scale \ge 0\). This is implies that \(0 < \condv \le \frac{1}{\scale}\). Obviously, if \(\scale=1\), then \(\condv \in \lorc{0}{1}\). Taking the conditional expectation on both sides, we get
    \begin{align}\label{eq:pre_comp_error_}
        &\EE \cexpecof[\big]{\norm{\dmat(t+1) - \auxdv(t+1)}^2 \given \sigalg_t} \nn\\
        & \le \taudv \condv \scale \EE \cexpecof[\big]{\norm{\dmat(t) - \auxdv(t)  - \encode{\dvar}^{\scale}(t)}^2 \given \sigalg_t} + \taudv (1 - \condv \scale)\norm{ \big(\dmat(t) - \auxdv(t)\big)}^2 \nn \\
        & \hspace{2cm} + \frac{\taudv}{\taudv - 1}\EE \cexpecof[\big]{\norm{\dmat(t+1) - \dmat(t)}^2 \given \sigalg_t} \nn \\ 
        & \le \taudv \condv \scale (1 - \compar) \norm{\dmat(t) - \auxdv(t) }^2 + \taudv (1 - \condv \scale)\norm{ \big(\dmat(t) - \auxdv(t)\big)}^2 \nn \\
        & \hspace{2cm} + \frac{\taudv}{\taudv - 1}\EE \cexpecof[\big]{\norm{\dmat(t+1) - \dmat(t)}^2 \given \sigalg_t} \nn \\ 
        & = \taudv \Big(\underbrace{\condv \scale (1 - \compar) +  (1 - \condv \scale)}_{= 1 - \condv \scale \compar} \Big) \norm{ \big(\dmat(t) - \auxdv(t)\big)}^2 \nn \\& \hspace{3cm} + \frac{\taudv}{\taudv - 1}\EE \cexpecof[\big]{\norm{\dmat(t+1) - \dmat(t)}^2 \given \sigalg_t}
    \end{align}
    for some constant \(\compar \in \lorc{0}{1}\). Substituting the expression for \(\norm{\dmat(t+1) - \dmat(t)}^2\) from \eqref{eq:crucial_step}, we have
    \begin{align*}
        &\frac{\taudv}{\taudv-1}\EE \cexpecof[\big]{\norm{\dmat(t+1) - \dmat(t)}^2\given \sigalg_t}  \le 3 \frac{\taudv}{\taudv-1} \constsz^2 \quanto^2 \EE \cexpecof[\Big]{\norm{\dmat(t) - \edvm(t)}^2\given \sigalg_t} \\& \hspace{2cm}+ 3 \frac{\taudv}{\taudv-1} \constsz^2 \quanto^2 \norm{\dmat(t) - \ones \avgdmat(t)}^2 + \frac{\taudv}{\taudv-1} \frac{3 \stsz^2}{\strconv^2} \norm{\gtmat(t)}^2. 
    \end{align*}
    Substituting back \(\norm{\gtmat(t)}^2\) from \eqref{eq:consen_error_gtmat} with \(\tau_3 = 1\) to obtain,
    \begin{align*}
        &\frac{\taudv}{\taudv-1}\EE \cexpecof[\big]{\norm{\dmat(t+1) - \dmat(t)}^2\given \sigalg_t} \le 3 \frac{\taudv}{\taudv-1} \constsz^2 \quanto^2 \Cgen \norm{\dmat(t) - \auxdv(t)}^2  \\& \hspace{0.5cm}+ 3 \frac{\taudv}{\taudv-1} \constsz^2 \quanto^2 \norm{\dmat(t) - \ones \avgdmat(t)}^2 + \frac{\taudv}{\taudv-1} \frac{3 \stsz^2}{\strconv^2} \bigg( (1+ \tau_3) \norm{\gtmat(t) - \ones \grd \av{\Objfunc} \big(\dmat(t) \big)}^2 \nn \\ & \hspace{0.5cm} + 2 \lips^2 \bigg(1 + \frac{1}{\tau_3} \bigg) \norm{\dmat(t) - \ones \avgdmat(t)}^2  + 2 \agents \lips^2 \bigg(1 + \frac{1}{\tau_3} \bigg) \norm{\avgdmat(t)^{\top} - \dvar^{\ast}}^2\bigg)\\
        & =  \frac{12 \lips^2 \stsz^2 \agents}{\strconv^2} \frac{\taudv}{\taudv - 1} \norm{\avgdmat(t)^{\top} - \dvar^{\ast} }^2 + \bigg(3 \constsz^2 \quanto^2 + \frac{12 \lips^2 \stsz^2}{\strconv^2} \bigg) \frac{\taudv}{\taudv - 1} \norm{\dmat(t) - \ones \avgdmat(t)}^2 \\
        & \hspace{0.5cm}+ \frac{6\stsz^2}{\strconv^2}\frac{\taudv}{\taudv - 1} \norm{\gtmat(t) - \ones \avggtmat(t)}^2 + 3 \constsz^2 \quanto^2 \Cgen \frac{\taudv}{\taudv - 1} \norm{\dmat(t) - \auxdv(t)}^2,
    \end{align*}
    which we substitute back to \eqref{eq:pre_comp_error_} to get
    \begin{align}
        \label{eq:final_comp_error}
        &\EE \cexpecof[\big]{\norm{\dmat(t+1) - \auxdv(t+1)}^2 \given \sigalg_t} \nn \\
        & \le \taudv \big(1 - \condv \scale \compar \big) \norm{ \big(\dmat(t) - \auxdv(t)\big)}^2 + \frac{\taudv}{\taudv - 1}\EE \cexpecof[\big]{\norm{\dmat(t+1) - \dmat(t)}^2 \given \sigalg_t} \nn \\
        & \le \frac{12 \lips^2 \stsz^2 \agents}{\strconv^2} \frac{\taudv}{\taudv - 1} \norm{\avgdmat(t)^{\top} - \dvar^{\ast} }^2 + \bigg(3 \constsz^2 \quanto^2 + \frac{12 \lips^2 \stsz^2}{\strconv^2} \bigg) \frac{\taudv}{\taudv - 1} \norm{\dmat(t) - \ones \avgdmat(t)}^2 \nn\\
        & \hspace{2.5cm}+ \frac{6\stsz^2}{\strconv^2}\frac{\taudv}{\taudv - 1} \norm{\gtmat(t) - \ones \avggtmat(t)}^2 \nn\\& \hspace{2cm} + \bigg(3 \constsz^2 \quanto^2 \Cgen \frac{\taudv}{\taudv - 1} +  \taudv \big(1 - \condv \scale \compar \big)\bigg)\norm{\dmat(t) - \auxdv(t)}^2\nn \\
        &= \frac{4 \lips^2 \stsz^2 \agents \tdv}{\strconv^2} \norm{\avgdmat(t)^{\top} - \dvar^{\ast} }^2 + \bigg(\constsz^2 \quant{1} + \frac{4\lips^2 \stsz^2 \tdv} {\strconv^2}\bigg) \norm{\dmat(t) - \ones \avgdmat(t)}^2 \nn \\
        & \hspace{1cm}+ \frac{2\stsz^2 \tdv}{\strconv^2} \norm{\gtmat(t) - \ones \avggtmat(t)}^2 + \big(\quantdv + \constsz^2 \quant{2}\big) \norm{\dmat(t) - \auxdv(t)}^2,
    \end{align}
    where for \(\scale>0\) and \(\compar \in \lorc{0}{1}\), we have \(\taudv(1 -  \condv \scale \compar) < \taudv\) and \(\taudv>1\) from Proposition \ref{prop:aux_res_1}. Choose \(\taudv>1\) such that \(\taudv(1 - \condv \scale \compar) < 1\), implying that \(1 < \taudv < \frac{1}{1 - \condv \scale\compar}\). Of course, if \(\compar = 1\) and \(\condv  = \frac{1}{\scale}\), then \(\taudv(1 -  \condv \scale \compar) = 0\) and we can choose any arbitrary \(\taudv>1\).

    \subsection*{Step \(5\): Analyzing the compression errors associated with the gradient tracker} Finally, we analyze \(\comerrgt(t)\). The steps that are similar to Step \(4\) are omitted. We begin by noticing (similar to Step \(4\)) that 
    \begin{align*}
        &\EE \cexpecof[\big]{\norm{\gtmat(t+1) - \auxgt(t+1)}^2 \given \sigalg_t}\\
        & \le \taugt \big( 1 - \congt \scale \compar\big)\norm{\gtmat(t) - \auxgt(t)}^2 + \frac{\taugt}{\taugt-1}\EE \cexpecof[\big]{\norm{\gtmat(t+1) - \gtmat(t)}^2 \given \sigalg_t},
    \end{align*}
    where \(\scale>0\), \(\compar \in \lorc{0}{1}\), and for any \(\taugt>1\) with \(1 - \congt \scale \ge 0\). This implies that \(\congt \in \lorc{0}{\frac{1}{\scale}}\). Let us now focus on \(\cexpecof[\big]{\norm{\gtmat(t+1) - \gtmat(t)}^2 \given \sigalg_t}\). We observe that
    \begin{align*}
        &\norm{\gtmat(t+1) - \gtmat(t)}^2\\
        & = \norm{\gtmat(t+1) + \constsz \big(\ones \avggtmat(t) - \ones \avggtmat(t)\big) - \gtmat(t)}^2\\
        & = \left\lVert\constsz(\identity{\agents} - \Wght)\gtmat(t) - \constsz(\identity{\agents} - \Wght)\gtmat(t) - \constsz(\identity{\agents} - \Wght)\egtm(t) \right. +\underbrace{\constsz \big(\ones \avggtmat(t) - \ones \avggtmat(t)\big)}_{= \constsz(\identity{\agents} - \Wght)\ones \avggtmat(t)} \\
        & \hspace{3cm} \left.+ \grd \Objfunc\big(\dmat(t+1)\big) - \grd \Objfunc\big(\dmat(t)\big)\right\rVert^2 \\
        & = \left \lVert \constsz(\identity{\agents} - \Wght)\big(\gtmat(t) - \egtm(t)\big) -  \constsz(\identity{\agents} - \Wght)\big(\gtmat(t) - \ones \avggtmat(t) \big) \right. \\ 
        & \hspace{3cm} \left. + \Objfunc\big(\dmat(t+1)\big) - \grd \Objfunc\big(\dmat(t)\big) \right \rVert^2 \\
        & \le 3 \constsz^2 \quanto^2 \norm{\gtmat(t) - \egtm(t)}^2 + 3 \constsz^2 \quanto^2 \norm{\gtmat(t) - \ones \avggtmat(t)}^2 + 3 \norm{\Objfunc\big(\dmat(t+1)\big) - \grd \Objfunc\big(\dmat(t)\big)}^2,
    \end{align*}
    implying that for each iterations \(t\), we obtain
    \begin{align*}
       \EE \cexpecof[\big]{\norm{\gtmat(t+1) - \gtmat(t)}^2 \given\sigalg_t } &\le 3 \constsz^2 \quanto^2 \Cgen \norm{\gtmat(t) - \auxgt(t)}^2  + 3 \constsz^2 \quanto^2 \norm{\gtmat(t) - \ones \avggtmat(t)}^2 \\
       & \hspace{3cm} + 3 \EE \cexpecof[\Big]{\norm{\Objfunc\big(\dmat(t+1)\big) - \grd \Objfunc\big(\dmat(t)\big)}^2 \given 
       \sigalg_t}.
    \end{align*}
    Put the upper bound of \(\norm{\gtmat(t)}^2\) from \eqref{eq:consen_error_gtmat} in Lemma \ref{it:port_6}, choose \(\tau_3 = 1\) as before, we recover 
    \begin{align}
        \label{eq:com_err_gt_pre_final}
        & 3 \EE \cexpecof[\Big]{\norm{\Objfunc\big(\dmat(t+1)\big) - \grd \Objfunc\big(\dmat(t)\big)}^2 \given 
       \sigalg_t} \nn\\
       & \le 3 \Bigg( \frac{12 \lips^4 \stsz^2 \agents}{\strconv^2} \norm{\avgdmat(t) - \dvar^{\ast}}^2 + \bigg(3 \lips^2 \constsz^2 \quanto^2 + \frac{12 \lips^4 \stsz^2}{\strconv^2} \bigg)\norm{\dmat(t) - \ones \avgdmat(t)}^2 \nn \\
       & \hspace{1cm} + \frac{6 \lips^2 \stsz^2}{\strconv^2}\norm{\gtmat(t) - \ones \avggtmat(t)}^2 + 3 \lips^2 \constsz^2 \quanto^2 \Cgen \norm{\dmat(t) - \auxdv(t)}^2 \Bigg).
    \end{align}
    Therefore, 
    \begin{align*}
        & \EE \cexpecof[\big]{\norm{\gtmat(t+1) - \gtmat(t)}^2 \given\sigalg_t }\\
        & \le \frac{36 \lips^4 \stsz^2 \agents}{\strconv^2} \norm{\avgdmat(t) - \dvar^{\ast}}^2 + \bigg(9 \lips^2 \constsz^2 \quanto^2 + \frac{36 \lips^4 \stsz^2}{\strconv^2} \bigg)\norm{\dmat(t) - \ones \avgdmat(t)}^2 \nn \\
       & \hspace{1cm} + \bigg(3\constsz^2\quanto^2 + \frac{18 \lips^2 \stsz^2}{\strconv^2}\bigg)\norm{\gtmat(t) - \ones \avggtmat(t)}^2 + 9 \lips^2 \constsz^2 \quanto^2 \Cgen \norm{\dmat(t) - \auxdv(t)}^2 \\
       & \hspace{3cm} + 3 \constsz^2 \quanto^2 \Cgen \norm{\gtmat(t) - \auxgt(t)}^2,
    \end{align*}
    which implies that 
    \begin{align}
        \label{eq:com_err_final_gt}
        &\EE \cexpecof[\big]{\norm{\gtmat(t+1) - \auxgt(t+1)}^2 \given \sigalg_t}\nn\\
        & \le \frac{\taugt}{\taugt-1} \Bigg( \frac{36 \lips^4 \stsz^2 \agents}{\strconv^2} \norm{\avgdmat(t) - \dvar^{\ast}}^2 + \bigg(9 \lips^2 \constsz^2 \quanto^2 + \frac{36 \lips^4 \stsz^2}{\strconv^2} \bigg)\norm{\dmat(t) - \ones \avgdmat(t)}^2 \nn \\
       & \hspace{1cm} + \bigg(3\constsz^2\quanto^2 + \frac{18 \lips^2 \stsz^2}{\strconv^2}\bigg)\norm{\gtmat(t) - \ones \avggtmat(t)}^2 + 9 \lips^2 \constsz^2 \quanto^2 \Cgen \norm{\dmat(t) - \auxdv(t)}^2 \nn \\
       & \hspace{3cm} + 3 \constsz^2 \quanto^2 \Cgen \norm{\gtmat(t) - \auxgt(t)}^2 \Bigg) + \taugt \big( 1 - \congt \scale \compar\big)\norm{\gtmat(t) - \auxgt(t)}^2 \nn\\
       & =  \frac{\taugt}{\taugt-1} \Bigg( \frac{36 \lips^4 \stsz^2 \agents}{\strconv^2} \norm{\avgdmat(t) - \dvar^{\ast}}^2 + \bigg(9 \lips^2 \constsz^2 \quanto^2 + \frac{36 \lips^4 \stsz^2}{\strconv^2} \bigg)\norm{\dmat(t) - \ones \avgdmat(t)}^2 \nn \\
       & \hspace{1cm} + \bigg(3\constsz^2\quanto^2 + \frac{18 \lips^2 \stsz^2}{\strconv^2}\bigg)\norm{\gtmat(t) - \ones \avggtmat(t)}^2 + 9 \lips^2 \constsz^2 \quanto^2 \Cgen \norm{\dmat(t) - \auxdv(t)}^2\Bigg) \nn\\
       & \hspace{2cm} + \bigg( 3 \constsz^2 \quanto^2 \Cgen \frac{\taugt}{\taugt-1} + \taugt \big( 1 - \congt \scale \compar\big) \bigg) \norm{\gtmat(t) - \auxgt(t)}^2,
       \end{align}
       where \(\scale>0\) and \(\compar \in \lorc{0}{1}\), we have \(\taugt(1 -  \congt \scale \compar) < \taugt\) and \(\taugt>1\) from Proposition \ref{prop:aux_res_1}. With similar arguments as in Step \(4\), we choose \(\taugt > 1\) such that \(\taugt(1 - \congt \scale \compar) < 1\), implying that \(1 < \taugt < \frac{1}{1 - \congt \scale\compar}\).
    Recall the notations established in \eqref{eq:notations}. Then \eqref{eq:com_err_final_gt} reduces to
    \begin{align}
        \label{eq:compression error grad track}
        &\EE \cexpecof[\big]{\norm{\gtmat(t+1) - \auxgt(t+1)}^2 \given \sigalg_t}\nn\\
        & = \frac{12 \lips^4 \stsz^2 \agents \tgt}{\strconv^2} \norm{\avgdmat(t) - \dvar^{\ast}}^2 + \bigg( 3 \lips^2 \constsz^2 \quant{3} + \frac{12 \lips^4 \stsz^2 \tgt}{\strconv^2} \bigg) \norm{\dmat(t) - \ones \avgdmat(t)}^2 \nn \\
       & \hspace{0.5cm} + \bigg(\constsz^2 \quant{3} + \frac{6 \lips^2 \stsz^2 \tgt}{\strconv^2} \bigg)\norm{\gtmat(t) - \ones \avggtmat(t)}^2 + 3 \lips^2 \constsz^2 \quant{4}\norm{\dmat(t) - \auxdv(t)}^2 \nn \\
       & \hspace{3cm} + (\quantgt + \constsz^2 \quant{4})\norm{\gtmat(t) - \auxgt(t)}^2 
    \end{align}
    The final expressions of the upper bounds of the errors derived --- \eqref{eq:opt err}, \eqref{eq:final_exp_con_error}, \eqref{eq:pre_comp_error_}, and \eqref{eq:final_GT_error}, \eqref{eq:compression error grad track} ---  are now stacked together to get \(\err(t+1) \le \conmat(\hypar) \err(t)\) for each iterations \(t\), whenever \(\stsz \le \frac{2\lips}{3 \strconv}\). This completes the first part of the proof of Lemma \ref{lem: key lemma}.
\end{proof}

\subsection{Proof of Theorem \ref{th:Main result_str_conv_case}}\label{subappen:main proof strong convex}
 We begin by establishing a set of sufficient conditions such that 
      \begin{align*}
          \conmat\big(\hypar\big) \eps \le \bigg( 1 - \frac{\stsz}{2 \cn}\bigg) \eps
      \end{align*}
      where \(\eps\) is a positive vector. Consider the first row
      \begin{align}\label{eq:fgh}
          \bigg(1 - \frac{3 \stsz}{2 \cn} + \frac{\stsz^3}{2 \cn^3}\bigg) \eps_1 + \bigg(\frac{\stsz^2 \cn^2}{\agents} + \frac{2 \stsz \cn^3}{\agents} \bigg) \eps_2 + \bigg(\frac{\stsz^2 }{\strconv^2 \agents} + \frac{2 \stsz^2 \lips}{\stsz \strconv^3 \agents} \bigg) \eps_3 \le \bigg(1 - \frac{\stsz}{2 \cn} \bigg) \eps_1,
      \end{align}
      which is equivalent to
      \(
          \frac{\stsz^3}{2 \cn^2} \eps_1 + \big(\frac{\stsz^2 \cn^2}{\agents} +  \frac{2 \stsz \cn^3}{\agents} \big)\eps_2 + \big(\frac{\stsz^2 }{\strconv^2 \agents} + \frac{2 \stsz \cn}{\strconv^2 \agents} \big) \eps_3 \le \frac{\stsz }{\cn}\eps_1
      \). If \(\stsz\) is chosen such that 
      \begin{align}\label{eq:suff_cond_1}
          \stsz \le \min \aset[\Bigg]{\frac{\eps_1}{\frac{3 \cn^3 \eps_2}{\agents} + \frac{3 \cn\eps_3}{\strconv^2 \agents}}, \cn \sqrt{\frac{2}{3}}} \text{ and }\frac{\eps_1}{\eps_2} \ge \frac{6 \cn^4}{\agents} + \frac{6 \cn^2}{\strconv^2 \agents} \frac{\eps_3}{\eps_2},
      \end{align}
    then the inequality \eqref{eq:fgh} is satisfied.
    Let us consider the second row, which is given by
    \begin{align}\label{eq:suff_con_aux}
        \frac{8 \cn^2 \stsz^2 \agents}{\strconv^2 \wh{\specnorm}} \eps_1 + \Bigg(\frac{1 + \wt{\specnorm}^2}{2} + \frac{8 \cn^2 \stsz^2}{\wh{\specnorm}} \Bigg) \eps_2 + \frac{4 \cn^2 \stsz^2 }{\wt{\specnorm}}\eps_3 + \frac{2 \constsz^2 \quanto^2 \Cgen^2}{\wh{\specnorm}}\eps_4 \le \bigg(1 - \frac{\stsz}{2 \cn} \bigg) \eps_2,
    \end{align}
    which can be simplified as
    \begin{align*}
        \frac{8 \cn^2 \stsz^2 \agents}{\strconv^2 \wh{\specnorm}} \eps_1 +  \frac{8 \cn^2 \stsz^2}{\wh{\specnorm}}  \eps_2 + \frac{4 \cn^2 \stsz^2 }{\wt{\specnorm}}\eps_3 + \frac{2 \constsz^2 \quanto^2 \Cgen^2}{\wh{\specnorm}}\eps_4 + \frac{\stsz}{2 \cn} \eps_2 &\le \frac{1 - \wt{\specnorm}^2}{2} \eps_2.
    \end{align*}
    Note that \(\frac{1 - \wt{\specnorm}^2}{2} = \frac{\constsz(1 - \specnorm)(2 - \constsz(1 - \specnorm))}{2} \ge \frac{\constsz(1- \specnorm)}{4}\), if \(\specnorm < 1\) and \(\constsz \le \frac{1.5}{1 - \specnorm}\) so that \((2 - \constsz(1 - \specnorm)) \ge 1\). In view of this, \eqref{eq:suff_con_aux} holds if
    \begin{align}\label{eq:step_}
        \frac{4 \cn^2 \stsz^2}{\wh{\specnorm}}\big( 2 \agents \eps_1+ 2 \eps_2 + \eps_3\big) + \frac{\stsz }{2 \cn} \eps_2  + \frac{2 \constsz^2 \quanto^2 \Cgen^2}{\wh{\specnorm}}\eps_4 \le \frac{\constsz(1 - \wt{\specnorm})}{4} \eps_2.
    \end{align}
    If we pick \(\eps\) and \(\stsz \) such that
    \begin{align}\label{eq:suff_row2}
        &\stsz \le \min \aset[\Bigg]{\frac{\constsz (1 - \wt{\specnorm}) \cn}{6}, \frac{1}{4 \cn} \sqrt{\frac{\constsz(1 - \specnorm)\wh{\specnorm}\eps_2}{3\wh{\epsln}}}} \text{ and }
        \frac{\eps_4}{\eps_2} \le \frac{(1 - \specnorm)\wh{\specnorm}}{8 \constsz^2 \quanto^2 \Cgen^2},
    \end{align}
      \eqref{eq:step_} holds, which in turn implies that \eqref{eq:suff_con_aux} remains valid. Now let us consider the third row
      \begin{align}\label{eq:gt_step}
          &\frac{24 \cn^2 \stsz^2 \agents}{\wh{\specnorm}} \eps_1 + \Bigg( \frac{6 \constsz^2 \quanto^2 }{\wh{\specnorm}} + \frac{24 \cn^2 \stsz^2}{\wh{\specnorm}}\Bigg) \eps_2 + \Bigg( \frac{1+ \wt{\specnorm^2}}{2} + \frac{12 \cn^2 \stsz^2}{\wh{\specnorm}}\Bigg) \lips^2 \eps_3 +  \frac{6 \constsz^2 \quanto^2 \Cgen}{\wh{\specnorm}}\eps_4 \nn \\
          & \hspace{1cm} + \frac{2 \constsz^2 \quanto^2 \Cgen}{\wh{\specnorm}}\eps_5 \le \Bigg(1  - \frac{\stsz}{2 \cn}\Bigg) \eps_3.
      \end{align}
      Using similar arguments, we can show that if
      \begin{align}
          \label{eq:gt_final}
          &\stsz \le \min \aset[\Bigg]{\frac{\constsz(1 - \specnorm)\cn}{6}, \frac{1}{12 \cn}\sqrt{\frac{\constsz(1 - \specnorm)\wh{\specnorm}\eps_3}{\wh{\epsln}}}} \nn \\
          &3 \eps_2 + 3 \Cgen \eps_4 + \Cgen \eps_5  \le \frac{(1 - \specnorm)\wt{\specnorm}\eps_3}{24 \constsz \quanto^2},
      \end{align}
      then \eqref{eq:gt_step} remains valid. We now consider the fourth row, given by
      \begin{align}\label{eq:tyb}
          4 \cn^2 \stsz^2 \agents \tdv \eps_1 + \big(\constsz^2 \quant{1} + 4 \cn2 \stsz^2 \tdv \big) \eps_2 + 2 \cn^2 \stsz^2 \tdv \eps_3 + \big(\quantdv + \constsz^2 \quant{2}\big) \eps_4 \le \bigg(1 -  \frac{\stsz}{2 \cn}\bigg)\eps_4,
      \end{align}
      which can be simplified as
      \begin{align}\label{eq:ghjk}
          2 \cn^2 \stsz^2 \tdv \big(2 \agents \eps_1 + 2 \eps_2 + \eps_3\big) + \constsz^2 \big(\quant{1} \eps_2 + \quant{2}\eps_4 \big) + \frac{\stsz}{2\cn} \eps_4  \le (1 - \quantdv)\eps_4.
      \end{align}
      If we choose \(\stsz \le \frac{\constsz}{\cn}\), then \eqref{eq:ghjk} can be written as
      \begin{align*}
          \constsz^2 \Big(2 \tdv \big(2 \agents \eps_1 + 2 \eps_2 + \eps_3 \big) + \big(\quant{1} \eps_2 + \quant{2}\eps_4 \big)  \Big) + \frac{\constsz}{2\cn^2} \eps_4 \le (1 - \quantdv)\eps_4,
      \end{align*}
       and upon picking 
      \begin{align}
          \label{eq:final_iuo}
              \constsz \le \min \aset[\Bigg]{1, \sqrt{\frac{(1 - \quantdv)\eps_4}{2 \tdv \wh{\epsln} + \ol{\epsln} + \frac{\epsln_4}{2 \cn^2}}}},
      \end{align}
      then \eqref{eq:ghjk} holds, implying that \eqref{eq:tyb} is valid. A similar line of reasoning can be adopted to get a sufficient condition such that the fifth row remains valid. We omit the details and directly write the sufficient conditions
      \begin{align}
          \label{eq:final_uipu}
              &\stsz \le \frac{\constsz}{\cn} \text{ and } \constsz \le \min \aset[\Bigg]{1, \sqrt{\frac{(1 - \quantgt)\eps_5}{6 \tdv \wh{\epsln} +  \Breve{\epsln} + \frac{\epsln_5}{2 \cn^2}}} }.
      \end{align}
      Combining \eqref{eq:suff_cond_1}, \eqref{eq:suff_row2}, \eqref{eq:gt_final}, \eqref{eq:final_iuo}, and \eqref{eq:final_uipu}, we obtain the sufficient conditions on \((\stsz, \constsz)\), specified in \eqref{eq:stsz_constsz} such that the inequality 
      \begin{align}\label{eq:obj_2}
          \conmat \big( \hypar\big) \eps \le \bigg(1 - \frac{\stsz}{2 \cn} \bigg)\eps
      \end{align}
      holds. In other words, there exists a positive vector \(\eps\) such \(0 \le \conmat \big( \hypar\big) \eps \le 1 - \frac{\stsz}{2 \cn}\eps\) holds. We now appeal to \cite[Corollary \(8.1.29\)]{ref:RAJ-CRJ-13} to conclude \(\spec \big( \conmat(\hypar)\big)  \le 1 - \frac{\stsz}{2 \cn} \). This completes the proof.

\end{document}